\documentclass[a4paper]{article}

\usepackage{fullpage}

\usepackage{amsmath,amsthm}
\usepackage{amsfonts,amssymb}
\usepackage{enumitem}

\usepackage{graphicx}
\usepackage{xcolor}
\definecolor{cb2blue}{RGB}{55,126,184}
\definecolor{cb2green}{RGB}{77,175,74}
\definecolor{cb2red}{RGB}{228,26,28}

\usepackage[colorlinks,
	citecolor=cb2green,
	linkcolor=cb2blue,
	urlcolor=cb2red]{hyperref}
\usepackage[ruled,vlined,linesnumbered,algosection,algo2e]{algorithm2e} 
\usepackage[capitalize,noabbrev,nameinlink]{cleveref} 

\newcommand{\emailLink}[1]{\href{mailto:#1}{#1}}
\newcommand{\orcidLink}[1]{\href{https://orcid.org/#1}{#1}}
\newcommand{\amsmscLink}[1]{\href{http://www.ams.org/mathscinet/msc/msc2020.html?t=#1}{#1}}

\usepackage{mathtools}
\usepackage{nicefrac}

\usepackage{booktabs}  
\usepackage{array}     

\theoremstyle{plain}
\newtheorem{theorem}{Theorem}[section]
\newtheorem{corollary}[theorem]{Corollary}

\newtheorem{lemma}[theorem]{Lemma}
\newtheorem{proposition}[theorem]{Proposition}
\newtheorem{assumption}[theorem]{Assumption}
\theoremstyle{definition}
\newtheorem{definition}[theorem]{Definition}
\theoremstyle{remark}
\newtheorem{remark}[theorem]{Remark}

\AddToHook{env/corollary/begin}{\crefalias{theorem}{corollary}}
\AddToHook{env/example/begin}{\crefalias{theorem}{example}}
\AddToHook{env/lemma/begin}{\crefalias{theorem}{lemma}}
\AddToHook{env/proposition/begin}{\crefalias{theorem}{proposition}}
\AddToHook{env/assumption/begin}{\crefalias{theorem}{assumption}}
\AddToHook{env/definition/begin}{\crefalias{theorem}{definition}}
\AddToHook{env/remark/begin}{\crefalias{theorem}{remark}}

\AtBeginEnvironment{remark}{%
	\pushQED{\qed}%
}
\AtEndEnvironment{remark}{\popQED}

\usepackage{tikz}
\usetikzlibrary{positioning,arrows.meta}

\DeclareMathOperator{\dom}{dom}
\DeclareMathOperator{\prox}{prox}

\DeclareMathOperator*{\minimize}{minimize}
\DeclareMathOperator{\dist}{dist}
\DeclareMathOperator{\interior}{int}

\DeclareMathOperator{\stt}{subject~to}
\newcommand{\indicator}{\iota}

\newcommand{\N}{\mathbb{N}}
\newcommand{\R}{\mathbb{R}}

\newcommand{\innerprod}[2]{\langle#1,#2\rangle}
\newcommand{\bigO}{\mathcal{O}}
\newcommand{\emphdef}[1]{\emph{#1}}
\renewcommand{\emptyset}{\varnothing}

\newcommand{\Hs}{H}
\newcommand{\Hsobs}{\Hs_{\mathrm{obs}}}
\newcommand{\Us}{U}
\newcommand{\Usad}{\Us_{\mathrm{ad}}}
\newcommand{\barUsad}{\bar{\Us}_{\mathrm{ad}}}

\newcommand{\meas}{{\mathfrak{m}}}

\newcommand{\cost}{\mathcal{J}} 			
\newcommand{\costr}{\mathcal{F}} 	
\newcommand{\costp}{\mathcal{R}} 	
\newcommand{\costs}{\mathcal{Q}} 			
\newcommand{\costb}{\cost_{\nu}} 	
\newcommand{\costrD}{{\nabla \costr}} 	

\newcommand{\barrierfun}{\mathcal{B}}

\newcommand{\gmap}{{\mathcal{G}}}	
\newcommand{\ssize}{{\alpha}} 						
\newcommand{\ssizek}{\alpha_j} 					
\newcommand{\ssizeinit}{{\alpha_{\mathrm{init},j}}}	
\newcommand{\ssizelb}{\ssize_{\min}}				
\newcommand{\ssizeub}{\ssize_{\max}}				
\newcommand{\mmax}{{m_{\mathrm{nm}}}}
\newcommand{\Lsmooth}{L_{\mathrm{smooth}}}

\newcommand{\yd}{{y_d}}
\newcommand{\Sctl}{S_{\mathrm{ctl}}}

\newcommand{\coloneqq}{:=}

\newcommand{\Niterout}{N_{\rm out}}
\newcommand{\Niterin}{N_{\rm in}}
\newcommand{\Nitertot}{N_{\rm tot}}

\newcommand{\stringlog}{\texttt{log}}
\newcommand{\stringinverse}{\texttt{inverse}}
\newcommand{\stringloglike}{\texttt{loglike}}
\newcommand{\unitsphere}{\mathbb{B}_1}
\newcommand{\mucoh}{\mu_{\text{coh}}}

\newcommand{\TheAuthorADM}{Alberto De~Marchi}
\newcommand{\TheEmailADM}{alberto.demarchi@unibw.de}
\newcommand{\TheOrcidADM}{0000-0002-3545-6898}
\newcommand{\TheAffiliationADM}{%
	Institute of Applied Mathematics and Scientific Computing,
	Department of Aerospace Engineering,
	University of the Bundeswehr Munich,
	85577 Neubiberg, Germany%
}
\newcommand{\TheAuthorBA}{Behzad Azmi}
\newcommand{\TheEmailBA}{behzad.azmi@uni-konstanz.de}
\newcommand{\TheOrcidBA}{0000-0002-6303-254X}
\newcommand{\TheAffiliationBA}{%
    Department of Mathematics and Statistics,
    University of Konstanz,
    78457 Konstanz, Germany%
}

\newcommand{\TheTitle}{Interior-Point Proximal Methods for Nonsmooth Optimization in Hilbert Spaces with Cone-Ordered Constraints}

\newcommand{\TheAbstract}{%
We study an inexact interior-point method for nonsmooth, possibly nonconvex optimization in a Hilbert space with inequality constraints ordered by a cone in a Banach lattice, with particular emphasis on infinite-dimensional state-constrained optimal control.
The objective function is given by the sum of a smooth, possibly nonconvex term and a convex, possibly nonsmooth term with a computable proximal mapping.
The constraints are formulated by means of an order cone in a Banach lattice.
This setting covers finite-dimensional nonsmooth nonlinear problems with componentwise constraints as well as infinite-dimensional PDE-constrained optimization problems with pointwise state constraints.
The method is based on barrier-regularized subproblems, which are solved inexactly by a proximal-gradient method.
We consider logarithmic and power-type barriers and derive the differentiability and curvature estimates needed for the convergence and complexity analysis.
Under suitable constraint qualifications and compactness assumptions, we establish approximate KKT conditions for the original problem and convergence of the inexact interior-point sequence.
For logarithmic and power-type barriers, we derive complementarity estimates and outer iteration bounds; the corresponding power-barrier rates and total inner–outer complexity bounds are stated under explicit barrier-path and uniform smoothness assumptions.
For convex problems, we obtain stronger convergence results.
We apply the framework to state-constrained semilinear elliptic optimal control and sparse dictionary learning with nonlinear side constraints.
Numerical experiments illustrate the proposed method.
}

\newcommand{\TheKeywords}{Proximal methods, interior-point methods, PDE-constrained optimization, worst-case complexity.}

\newcommand{\TheMSCClass}{\amsmscLink{65K05}, 
	\amsmscLink{90C51}, 
	\amsmscLink{65Y20}, 
	\amsmscLink{49M41}. 
}
	
\begin{document}
	
\title{\bfseries \TheTitle}
\author{\TheAuthorBA\thanks{%
		\TheAffiliationBA,
		\textsc{email} \emailLink{\TheEmailBA},
		\textsc{orcid} \orcidLink{\TheOrcidBA}.%
	}\and%
	\TheAuthorADM\thanks{%
		\TheAffiliationADM,
		\textsc{email} \emailLink{\TheEmailADM},
		\textsc{orcid} \orcidLink{\TheOrcidADM}.%
	}%
}%
\date{}

\maketitle

\begin{abstract}
    \TheAbstract

    \paragraph*{Keywords} \TheKeywords

    \paragraph*{AMS MSC} \TheMSCClass
\end{abstract}

\section{Introduction}\label{sec:generic}

Interior-point methods are a central paradigm in continuous optimization, in which the original problem is replaced by a sequence of barrier-regularized subproblems
\cite{fiacco1964sequential,Forsgren2002interior,Nesterov1994Interior,Wright1997Primal}.
Interior-point methods have also been extended to infinite-dimensional and PDE-constrained optimization problems
\cite{Ulbrich2009Primal,Schiela2009barrier}.
Most of the available theory concerns problems with smooth objectives, for which the barrier subproblems can be treated by smooth optimization methods.
Much less is known when the objective also contains a nonsmooth term with a computable proximal mapping.

In this paper, we study an inexact interior-point method for nonsmooth optimization problems subject to conic inequality constraints.
More precisely, we consider problems of the form
\begin{equation}\label{eq:Reduced}
    \minimize_{u \in \Us}\;
    \cost(u)\coloneqq \costr(u)+\costp(u)
    \quad\stt\quad
    H(u)\le_Z 0 \ \text{in } Z 
\end{equation}
where \(\Us\) is a Hilbert space and the objective \(\cost\) consists of a smooth, possibly nonconvex term \(\costr\) and a convex, possibly nonsmooth term \(\costp\) with computable proximal mapping.
Inequality \(H(u)\le_Z 0\) is understood with respect to an order structure in \(Z\) and the constraint mapping \(H\) may be nonlinear, so the feasible set need not be convex.

The formulation \eqref{eq:Reduced} covers both finite- and infinite-dimensional constrained optimization problems.
In finite dimensions, taking $Z=\R^q$ with its componentwise order yields nonsmooth nonlinear problems with finitely many inequality constraints.
This includes sparse and regularized models from structured learning, signal processing, and imaging, such as basis-pursuit denoising, sparse nonnegative matrix factorization, and sparse dictionary learning.

The same framework also applies to infinite-dimensional problems whose constraints are naturally expressed through an order structure.
A relevant class studied in this paper is PDE-constrained optimization with state constraints.
Using the reduced formulation, pointwise or functional state constraints give rise to constraints of the form $H(u)\leq_Z 0$ in a suitable Banach lattice $Z$.
For pointwise state constraints, a typical choice is $Z=C(\overline{K})$, where $K\subset\R^n$ is a bounded open domain.

Interior-point methods are well-suited to \eqref{eq:Reduced} because the conic constraint can be incorporated through a barrier functional.
For a barrier parameter \(\nu>0\), this leads to subproblems of the form
\begin{equation}\label{eq:Reduced_barrier_problem}
    \minimize_{u \in \Us}\;
    \costb(u)\coloneqq \cost(u) + \nu \barrierfun (H(u)),
\end{equation}
where \(\barrierfun \) is defined on \(\interior  (Z_-)\), the interior of the negative cone.
Since the objective $\cost\coloneqq \costr+\costp$ contains the nonsmooth term \(\costp\), the barrier subproblems are not purely smooth.
Throughout, we regard \(\costb\) as an extended-real-valued functional on \(U\), with
$\costb(u)\coloneqq +\infty$ whenever $H(u)\not\in \interior Z_-$.
We therefore use a proximal-gradient method to compute approximate stationary points, without requiring exact subproblem minimizers.

The analysis of this scheme involves several difficulties.
The barrier term induces curvature that may grow as the barrier parameter decreases and as the iterates approach the boundary of the cone.
For integral barriers, blow-up of the scalar kernel does not imply blow-up of the functional at every boundary point of the cone.
Moreover, the inexact solution of the subproblems \eqref{eq:Reduced_barrier_problem} has to be related to the approximate stationarity of the original constrained problem \eqref{eq:Reduced}.
In function spaces, passing to the limit requires suitable compactness properties, since primal iterates and measure-valued multipliers may only converge weakly and weakly-$\ast$, respectively.

\subsection{Related work}
For nonlinear and conic optimization, interior-point and barrier methods have been studied extensively; see, for instance, \cite{Forsgren2002interior,gondzio2025interior}.
Classical analyses are mainly developed for finite-dimensional problems with smooth objective and constraint functions and are often based on Newton, trust-region, or primal-dual Newton-type steps.

More recently, barrier methods for nonconvex constrained optimization have been studied from the viewpoint of iteration complexity.
First- and second-order Hessian barrier methods for finite-dimensional nonconvex conic optimization were analyzed in \cite{Dvurechensky2025Hessian}, where complexity bounds for reaching approximate first- and second-order KKT points were derived.
First-order interior-point methods for linearly constrained smooth optimization were considered in \cite{Tseng2011afirst}.
More broadly, worst-case complexity theory for first-order methods in nonconvex composite optimization has been developed independently of interior-point structure \cite{cartis2010complexity,ghadimi2016accelerated}, underlying the $\bigO(\varepsilon^{-2})$-type bounds used in \cref{sec:inner_complexity}.
These results, however, do not treat the present combination of nonsmooth proximal terms, nonlinear cone-valued constraints, and infinite-dimensional spaces.

Another related direction concerns regularized optimization and proximal-gradient methods.
Proximal-gradient methods are standard for composite objectives with convex nonsmooth terms \cite{themelis2018forward,kanzow2022convergence,demarchi2023monotony,azmi2025nonmonotone,tseng2009coordinate}, and have also been combined with penalty, augmented-Lagrangian, and interior-point ideas \cite{demarchi2023constrained,hallak2023adaptive,demarchi2025penalty,monteiro2013iteration,chouzenoux2020proximal,demarchi2024interior}.
The closest predecessor is \cite{demarchi2024interior}, which combines interior-point and proximal-gradient methods for nonsmooth nonconvex optimization with smooth inequality constraints in finite dimensions.
The present work extends this approach to Hilbert-space controls and Banach-lattice-valued constraints, including integral barriers, measure-valued multipliers, weak/weak-\(\ast\) limit passages, and collective compactness of the adjoint derivative family.

Function-space interior-point and state-constrained optimal-control methods have also been studied, including affine-scaling, primal-dual, barrier, and path-following approaches \cite{Ulbrich2000super,Ulbrich2009Primal,weiser2005Interior,Schiela2009barrier,Prufert2008Convergence,Schiela2011anInterior,kruse2015self}.
The state-constrained PDE applications considered there are typically governed by linear equations, the corresponding optimization problems are largely convex, and the methods are generally based on smooth problem structures and Newton-type steps.
Moreau–Yosida regularization with semismooth Newton methods provides an alternative for state constraints \cite{ito2003semismooth,hintermueller2009moreau}.
These works mainly consider smooth or convex settings and do not combine the nonsmooth proximal inner solver, Banach-lattice constraint structure, and complexity analysis developed here.

The present paper combines several features that are not covered together in the works discussed above.

\paragraph*{Contributions}
The main contributions are as follows.
\begin{itemize}
    \item We study logarithmic and power-type integral barriers for cone-ordered constraints in Banach lattices and establish differentiability, curvature, and complementarity estimates.
    \item We analyze the inexact solution of nonsmooth barrier subproblems by a nonmonotone proximal-gradient method, proving finite termination of backtracking and deriving inner iteration bounds under suitable assumptions.
    \item Under compactness and constraint-qualification assumptions, we prove KKT-type convergence of the inexact interior-point sequence and derive barrier-dependent outer iteration complexity bounds.
    Under additional assumptions on the inner iterations, we also obtain total proximal-gradient complexity bounds.
    For both barrier types, we establish stronger optimality and convergence results in the convex and strongly convex settings.
    \item We investigate the applicability of the framework to state-constrained semilinear elliptic control problems, verifying the relevant regularity and compactness assumptions, and present numerical examples in PDE-constrained and finite-dimensional optimization.
\end{itemize}

\paragraph*{Organization}
The structure of the paper and the relations between its main results are summarized in \cref{fig:roadmap}.
\Cref{sec:problem_examples} introduces the problem class and establishes existence and first-order optimality conditions.
\Cref{sec:barrier_inner} develops the barrier and inner-loop analysis, \cref{sec:outer_scheme} the interior-point convergence and complexity results, \cref{sec:pde_verification} the PDE verification, and \cref{sec:numerics} the numerical experiments.

\begin{figure}[tbh]
	\centering%
	\begin{tikzpicture}[
	font=\small,
	node distance=8mm and 8mm,
	box/.style={
		rectangle, rounded corners=0pt, draw, very thin,
		align=center, inner sep=2mm, line width=0.5pt
	},
	solidarrow/.style={-{Stealth}, thick, solid, black},
	dashedarrow/.style={-{Stealth}, thick, dashed, red},
	]
	
	\newcommand{\lbl}[1]{{\footnotesize #1}}
	\newcommand{\sectionref}[1]{(\hyperref[#1]{\S\ref{#1}})}
	
	\node[box] (foundations)
	at (0,0)
	{\textbf{Foundations \sectionref{sec:problem_examples}} \\[2pt]
		\lbl{\cref{thm:existence} $\cdot$ existence of minimizers} \\
		\lbl{\cref{thm:KKT} $\cdot$ optimality conditions}};
	
	\node[box, right=of foundations] (barrier)
	{\textbf{Barrier \& inner loop \sectionref{sec:barrier_inner}} \\[2pt]
		\lbl{\cref{lem:central-path-Lip-onesided} $\cdot$ Lipschitz estimates} \\
		\lbl{\cref{sec:inner_complexity} $\cdot$ inner complexity}};
	
	\node[box, right=of barrier] (complexity)
	{\textbf{Total complexity \sectionref{sec:overall_complexity}} \\[2pt] 
		\lbl{\cref{cor:overall_complexity_unified}}};
	
	\node[box, below=of barrier] (outer)
	{\textbf{Interior point loop \sectionref{sec:outer_convergence}} \\[2pt]
		\lbl{\cref{prop:barrier-to-KKT} $\cdot$ convergence} \\
		\lbl{\cref{thm:outer-convergence} $\cdot$ outer complexity}};
	
	\node[box, right=of outer] (convex)
	{\textbf{Convex case} \\[2pt]
		\lbl{\cref{prop:convex_case}}};
	
	\node[box, left=of outer] (pde)
	{\textbf{PDE verification \sectionref{sec:pde_verification}} \\[2pt]
		\lbl{\cref{prop:unified_compact_stability_affine_Bf_Neumann,lem:Applicability}}};
	
	\draw[solidarrow] (foundations) -- (barrier);
	\draw[solidarrow] (barrier) -- (outer);
	\draw[solidarrow] (barrier) -- (complexity);
	\draw[solidarrow] (outer.north east) -- (complexity.south west);
	\draw[solidarrow] (outer) -- (convex);
	\draw[dashedarrow] (pde) -- (foundations);
	\draw[dashedarrow] (pde) -- (outer);
	
\end{tikzpicture}%
	\caption{Interrelations among different aspects and results presented in the following sections. Dashed arrows indicate application-specific verification.}%
	\label{fig:roadmap}%
\end{figure}

\section{Problem setting and optimality conditions}\label{sec:problem_examples}

\subsection{Problem class and examples}
We now collect the standing assumptions for the generic problem \eqref{eq:Reduced} and recall the order notation used throughout the paper.
The order relation on the Banach lattice \(Z\) is induced by its positive cone \(Z_+\): for \(z_1,z_2\in Z\), we write \(z_1\le_Z z_2\) whenever \(z_2-z_1\in Z_+\).
The negative cone is denoted by \(Z_- \coloneqq -Z_+\), so that the constraint \(H(u)\le_Z 0\) is equivalently written as \(H(u)\in Z_-\).
In particular, we write \(H(u)<_Z 0\) whenever \(H(u)\in \interior(Z_-)\).
Since \(\interior Z_-\) is open in \(Z\), strict feasibility is stable under sufficiently small perturbations in the \(Z\)-norm.

\begin{assumption}\label{ass:General}
    Throughout, the following conditions hold:
    \begin{enumerate}[label=(G\arabic*), start=0]
        \item\label{G0}%
        \(Z_-\) has nonempty interior.
        \item\label{G1}%
        \(\costp:\Us\to(-\infty,+\infty]\) is proper, convex, and lower semicontinuous (lsc).
        \item\label{G2}%
        There exists an open set \(\mathcal D\subset\Us\) with \(\dom\costp\subseteq\mathcal D\) such that $\costr:\mathcal D\to\R$ and $H:\mathcal D\to Z$ are continuously Fr\'echet differentiable on \(\mathcal D\).
    \end{enumerate}
\end{assumption}

\ref{G0} ensures that the barrier domain \(\interior Z_-\) is nonempty.
\ref{G1} covers the nonsmooth terms typically used for regularization, including sparsity-promoting penalties and indicator functions of closed convex sets.
\ref{G2} specifies the differentiability required for the smooth part of the objective and for the conic constraint mapping.
All derivatives below are evaluated only at points in $\mathcal{D}$.
The feasible and strictly feasible sets associated with \eqref{eq:Reduced} are denoted respectively by
\begin{equation}\label{eq:strict-feasible-domain}
    \Usad
    \coloneqq
    \{ u\in\dom\costp : H(u)\le_Z 0\,\}
    \quad\text{and}\quad
    \Usad^\circ\coloneqq\{u\in\dom\costp:H(u)<_Z0\}.
\end{equation}

\paragraph*{Infinite-dimensional examples}

Formulation \eqref{eq:Reduced} also applies to optimization problems governed by PDEs subject to state constraints.
A representative abstract problem is given by
\begin{subequations}\label{eq:Primal}
\begin{align}
    \minimize_{(y,u)\in Y\times U}{}&\;
    \frac12\|Cy-y_d\|_{\Hsobs}^2+\costp(u),
    \label{eq:Primal_obj} \\
    \stt{}&\quad
    Ay+N(y)=Bu+f
    &&\text{in }V^\ast,
    \label{eq:Abs_state} \\
    &\quad
    Sy\le_Z\bar y
    &&\text{in }Z .
    \label{eq:state_constraint}
\end{align}
\end{subequations}
Here, \(u\) denotes the control and \(y\) the corresponding state.
The state equation \eqref{eq:Abs_state} is understood in the weak sense, while \eqref{eq:state_constraint} imposes an order constraint on the state.
More precisely, \(A:V\to V^\ast\) denotes the operator associated with the linear part of the PDE, \(N:Y\to V^\ast\) represents its nonlinear part, and \(B:U\to V^\ast\) is the control operator.
Furthermore, \(C:Y\to\Hsobs\) denotes the observation operator, and \(S:Y\to Z\) maps the state to the quantity subject to the order constraint.
The nonsmooth term \(\costp\) can model, for example, convex control constraints through indicator functions, sparsity-promoting \(L^1\)-type penalties, or combinations of such terms.
We work with a Gelfand triple $V \hookrightarrow H \cong H^\ast \hookrightarrow V^\ast$ with dense and continuous embeddings.
The state space \(Y\) is chosen so that the state equation is well-posed, and the state constraint is meaningful in the ordered space \(Z\).
In pointwise state-constrained problems, a typical choice is \(Z=C(\overline K)\) for a bounded open set \(K \subset \R^d\) in the physical domain, equipped with the pointwise order.
Then \(Z^\ast\) can be identified with the space of finite signed Radon measures on \(\overline K\), which is one reason why multiplier boundedness and weak-$\ast$ convergence are delicate in infinite-dimensional analysis.
Assuming that the state equation is well posed, let \(\Sctl :U\to Y\) denote the control-to-state map.
If \(\Sctl\) is continuously Fr\'echet differentiable and \(U\) is identified with the Hilbert space \(\Us\) in \eqref{eq:Reduced}, then \eqref{eq:Primal} reduces to \eqref{eq:Reduced} with
\begin{equation}\label{eq:reduced_definitions}
    \costr(u)\coloneqq \tfrac12\|C \Sctl(u)-y_d\|_{\Hsobs}^2,
    \qquad
    H(u)\coloneqq S\,\Sctl(u)-\bar y .
\end{equation}
In \cref{sec:pde_verification}, we verify the abstract assumptions for a general class of state-constrained semilinear elliptic optimal control problems, and \cref{ex:pde-control} presents numerical results for a representative instance.

\begin{remark}
    The same abstract formulation may also apply to evolution equations, such as semilinear parabolic problems with suitable initial and boundary conditions.
    This would require choosing a state space in which the state constraint is meaningful as an order-cone constraint and verifying the corresponding well-posedness and regularity assumptions.
    We do not pursue this direction here.
\end{remark}

\subsection{Existence of minimizers}

We establish that problem \eqref{eq:Reduced} admits at least one solution, under conditions that cover both the finite-dimensional and infinite-dimensional Hilbert-space cases.

\begin{theorem}[Existence of minimizers]\label{thm:existence}
    Assume that the following conditions hold:
    \begin{enumerate}[label=(E\arabic*)]
        \item\label{E1} The reduced cost functional $\cost$ is bounded from below on the admissible set $\Usad$, i.e.,  $\inf_{u\in \Usad} \cost(u) > -\infty$.
        \item\label{E2} The mapping $u \mapsto \costr(u)$ is weakly lsc.
        \item\label{E3} Either $\cost$ is radially unbounded on $\Us$, i.e., $\lim_{\|u\|_{\Us}\to\infty}\cost(u)= \infty$, or the admissible set $\Usad$ is bounded in $\Us$.    
        \item\label{E4} The admissible control set $\Usad$ is nonempty and weakly sequentially closed in $\Us$.
    \end{enumerate}
    Then, \eqref{eq:Reduced} admits at least one minimizer.
\end{theorem}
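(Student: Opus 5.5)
We use the direct method of the calculus of variations. By \ref{E4}, \(\Usad\neq\emptyset\). Set \(j^\ast\coloneqq\inf_{u\in\Usad}\cost(u)\); by \ref{E1} we have \(j^\ast>-\infty\). We may assume \(j^\ast<+\infty\). This holds, for instance, when \(\Usad\subseteq\dom\costp\), which is automatic by the definition \eqref{eq:strict-feasible-domain}, since \(\costr\) is real-valued on \(\mathcal D\supseteq\dom\costp\) by \ref{G2}. Choose a minimizing sequence \((u_k)\subset\Usad\) with \(\cost(u_k)\to j^\ast\).

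The first step is to show that \((u_k)\) is bounded in \(\Us\). If \(\Usad\) is bounded, this is immediate. Otherwise \ref{E3} gives radial unboundedness. Suppose \(\|u_k\|_\Us\to\infty\) along a subsequence. Then \(\cost(u_k)\to+\infty\) along that subsequence, which contradicts \(\cost(u_k)\to j^\ast<\infty\). Since \(\Us\) is a Hilbert space, hence reflexive, bounded sequences have weakly convergent subsequences. So, after passing to a subsequence, \(u_k\rightharpoonup \bar u\) in \(\Us\). By the weak sequential closedness in \ref{E4}, \(\bar u\in\Usad\).

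The second step is weak lower semicontinuity of \(\cost=\costr+\costp\). For \(\costr\) this is exactly \ref{E2}. For \(\costp\) we use \ref{G1}: \(\costp\) is convex and lsc, so its epigraph is closed and convex. By Mazur's lemma the epigraph is therefore weakly closed, which means \(\costp\) is weakly (sequentially) lsc. A minor subtlety is that \(\costr\) is only defined on \(\mathcal D\). Since every \(u_k\) and \(\bar u\) lie in \(\Usad\subseteq\dom\costp\subseteq\mathcal D\), \(\cost\) is evaluated only at admissible points, so this causes no difficulty. Combining the two liminf inequalities gives
\[
\cost(\bar u)\le \liminf_{k\to\infty}\costr(u_k)+\liminf_{k\to\infty}\costp(u_k)\le\liminf_{k\to\infty}\cost(u_k)=j^\ast .
\]
Together with \(\bar u\in\Usad\), this yields \(\cost(\bar u)=j^\ast\), so \(\bar u\) is a minimizer of \eqref{eq:Reduced}.

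The main point needing care is the weak lower semicontinuity of the sum: the liminf of a sum dominates the sum of the liminfs only when no \(-\infty+\infty\) ambiguity occurs. Here \(\costp>-\infty\) everywhere, and \(\costr\) is real-valued on \(\mathcal D\). Moreover, \(\liminf_k\costr(u_k)\ge\costr(\bar u)>-\infty\) by \ref{E2}, and \(\liminf_k\costp(u_k)\ge\costp(\bar u)>-\infty\). Hence both liminfs exceed \(-\infty\), and the superadditivity of \(\liminf\) applies. Everything else is routine.
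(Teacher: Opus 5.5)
Your proposal is correct and follows the same direct-method argument as the paper. The chain is: a minimizing sequence is bounded by (E3), a subsequence converges weakly to a limit in $\Usad$ by reflexivity and (E4), and weak lower semicontinuity of $\costr+\costp$ from (E2) and (G1) makes that limit a minimizer. The only difference is cosmetic: the paper phrases the compactness step through the sublevel set $\barUsad$, whereas you bound the minimizing sequence directly and spell out the Mazur and $\liminf$-superadditivity details.
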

\begin{proof}
    The argument is based on the direct method in the calculus of variations.
    Since $\Usad$ is nonempty, there exists a $\bar{u} \in \Usad$.
    Let $\{u_n\}_{n\in\N} \subset \barUsad \coloneqq \Usad \cap \bigl\{\, u \in \Us \;|\; \cost(u) \le  \cost(\bar{u}) \,\bigr\}$ be a minimizing sequence for \eqref{eq:Reduced}.
    Due to \ref{E2}--\ref{E4} and \ref{G1}, $\barUsad$ is sequentially weakly compact in $\Us$.  Thus, there exist a subsequence (not relabeled) and some $u^\star\in \barUsad$ such that $u_n \rightharpoonup u^\star$ weakly in $\Us$.
    Moreover, by \ref{E2} and \ref{G1}, the weak lower semicontinuity of $\cost$ implies
    \[
    \cost(u^\star) \le \liminf_{n\to\infty} \cost(u_n) = \inf_{u\in \Usad}\cost(u),
    \]
    and therefore $u^\star$ is a minimizer of \eqref{eq:Reduced}.
\end{proof}

\begin{remark}
\cref{thm:existence} is stated in a form that covers both finite- and infinite-dimensional settings:
\begin{itemize}
    \item  Finite-dimensional case (\(\Us=\R^n\)):
Weak and strong convergence coincide, and weak sequential closedness in \ref{E4} is equivalent to ordinary closedness.
Moreover, weak lower semicontinuity in \ref{E2} reduces to the usual lower semicontinuity. Hence, the theorem becomes the standard Weierstrass-type existence of solutions.

\item Infinite-dimensional Hilbert-space case:
The main issue is compactness. Bounded subsets of \(\Us\) are generally not relatively compact in the strong topology, but they are weakly sequentially relatively compact by reflexivity. Thus, the boundedness condition \ref{E3} provides the required compactness in the weak topology.  Moreover, if \(\Usad\) is closed and convex, then it is weakly sequentially closed; hence, \ref{E4} is satisfied. In PDE-constrained optimization, the remaining weak continuity properties are often verified using the weak-to-strong continuity of \(\Sctl\), and consequently of \(H\); see \cref{lem:Applicability}.
\qedhere
\end{itemize}
\end{remark}

\subsection{First-order optimality conditions}\label{sec:kkt}

We derive KKT-type first-order optimality conditions for the constrained regularized problem \eqref{eq:Reduced}, where \(\Us\) is a Hilbert space and \(Z\) is a Banach lattice as in \cref{sec:generic}.

Constraint qualifications (CQs) are local regularity assumptions at a feasible point; they ensure, in particular, the existence of Lagrange multipliers for the inequality system in~\eqref{eq:Reduced}.
Let \(Z_- \coloneqq -Z_+\) and denote by \(Z_+^\ast\subset Z^\ast\) the dual cone,
\[
Z_+^\ast \coloneqq \{\,\mu\in Z^\ast \;:\; \innerprod{\mu}{z}_{Z^\ast,Z}\ge 0\ \ \forall z\in Z_+\,\}.
\]
For the two main scenarios covered by \eqref{eq:Reduced}, the dual positive cone is given by $Z_+^\ast=\R_+^q$ in the finite-dimensional case $Z=\R^q$, endowed with the componentwise order.
In the infinite-dimensional function-space case, $Z=C(\overline{K})$ and $Z^\ast=\mathcal{M}(\overline{K})$, where $\mathcal{M}(\overline{K})$ denotes the space of finite signed regular Borel measures on the compact set $\overline{K}$.
Consequently, $Z_+^\ast=\mathcal{M}_+(\overline{K})$, the cone of finite nonnegative regular Borel measures on $\overline{K}$, and $C(\overline{K})$ is endowed with the pointwise order.

\begin{definition}\label{def:CQ_general}
    Let \(u^\star\in\mathcal D\).
    We consider the following standard CQs at \(u^\star\):
    \begin{enumerate}[label={(CQ\arabic*)},ref=(CQ\arabic*)]
        \item \emphdef{Linearized Slater condition:}\label{CQ1}
        There exists \(v\in\dom\costp\) such that $H(u^\star)+H'(u^\star)(v-u^\star)<_Z0$.
        \item \emphdef{Slater condition:}\label{CQ2}
        \(H\) is convex on \(\dom\costp\) and there exists \(\bar u\in\dom\costp\) such that $H(\bar u)<_Z0$.
    \end{enumerate}
\end{definition}

\begin{remark}\label{rem:CQs}
    When $\dom\costp=U$, \ref{CQ1} is the usual linearized Slater condition with an arbitrary direction $d=v-u^\star\in U$.
    If $\costp=\indicator_C$, then $\dom\costp=C$, so the comparison point must belong to $C$.
    Moreover, \ref{CQ2} implies \ref{CQ1} at every feasible $u^\star$; see also \cite[Props.~2.104 and~2.106]{BonnShap2000perturbation}.
\end{remark}

With existence in hand, we now derive the necessary KKT-type conditions for a local solution of \eqref{eq:Reduced}, showing that \ref{CQ1} guarantees the existence of a Lagrange multiplier satisfying dual feasibility and complementarity.

\begin{theorem}[First-order optimality conditions]\label{thm:KKT}
    Let \(u^\star\in \Usad\) be a local solution of~\eqref{eq:Reduced}.
    Assume that \ref{CQ1} holds at $u^\star$.
    Then there exists a multiplier \(\mu^\star\in Z_+^\ast\) such that
    \begin{equation}\label{eq:KKT_inclusion}
        0 \in \nabla \costr(u^\star) + \partial \costp(u^\star) + H'(u^\star)^\ast \mu^\star .
    \end{equation}
    In addition, the primal feasibility and complementarity conditions
    \begin{equation}\label{eq:KKT_comp}
        H(u^\star)\le_Z 0,\qquad
        \mu^\star\in Z_+^\ast,\qquad
        \innerprod{ \mu^\star}{H(u^\star)}_{Z^\ast,Z}=0
    \end{equation}
    are satisfied.
\end{theorem}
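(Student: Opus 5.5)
The plan is the classical route: first derive a linearized optimality condition (absence of feasible descent directions) and then obtain the multiplier from a separation argument in \(\R\times Z\). The Slater-type hypothesis \ref{CQ1} guarantees that the multiplier attached to the objective is nonzero.

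\emph{Step 1: primal first-order condition.} I would show that there is no direction \(d=v-u^\star\) with \(v\in\dom\costp\) such that
\[
H(u^\star)+H'(u^\star)d<_Z0
\quad\text{and}\quad
\innerprod{\nabla\costr(u^\star)}{d}+\costp(v)-\costp(u^\star)<0 .
\]
Suppose such a \(d\) exists and set \(u_t\coloneqq u^\star+td\) for small \(t\in(0,1]\). By convexity of \(\dom\costp\), \(u_t\in\dom\costp\subseteq\mathcal D\). Fréchet differentiability and the convex combination \(H(u^\star)+tH'(u^\star)d+o(t)=(1-t)H(u^\star)+t\bigl(H(u^\star)+H'(u^\star)d\bigr)+o(t)\) then apply. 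The first summand lies in \(Z_-\) and the second in \(t\,\interior Z_-\), which contains a ball of radius \(ct\). Since the cone \(Z_-\) is convex and absorbs \(o(t)\) perturbations, \(H(u_t)\in Z_-\) for small \(t\). Convexity of \(\costp\) gives \(\cost(u_t)-\cost(u^\star)\le t\bigl(\innerprod{\nabla\costr(u^\star)}{d}+\costp(v)-\costp(u^\star)\bigr)+o(t)<0\), contradicting local optimality. The same convex-combination trick also handles directions for which only the nonstrict inequality holds, by mixing with the Slater direction \(\hat v\) from \ref{CQ1}. Accordingly, I would state the condition with the constraint cone \(Z_-\) and strict inequality only in the objective. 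To do this I would replace \(v\) by \((1-s)v+s\hat v\), which makes the constraint strictly interior while the objective stays negative for small \(s\).

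\emph{Step 2: separation.} Define the convex set
\[
\mathcal C\coloneqq\bigl\{(\innerprod{\nabla\costr(u^\star)}{v-u^\star}+\costp(v)-\costp(u^\star)+r,\;H(u^\star)+H'(u^\star)(v-u^\star)+z)\;:\;v\in\dom\costp,\,r\ge0,\,z\in Z_+\bigr\}\subset\R\times Z .
\]
It is convex because the map is affine-plus-convex in \(v\). It has nonempty interior, since \(Z_+\) has interior by \ref{G0} and \(r\) ranges over \([0,\infty)\). By Step 1 (strict version), \(\mathcal C\) does not meet the open convex cone \((-\infty,0)\times\interior Z_-\). Hahn--Banach then yields \((\lambda_0,\mu)\neq0\) in \(\R\times Z^\ast\) with \(\lambda_0 a+\innerprod{\mu}{z}\ge0\) on \(\mathcal C\) and \(\le0\) on the cone. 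From the cone and the free variables \(r,z\) we get \(\lambda_0\ge0\) and \(\mu\in Z_+^\ast\).

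\emph{Step 3: normalization, inclusion, complementarity.} If \(\lambda_0=0\), take \(v=\hat v\) and \(r=0,z=0\). This gives \(\innerprod{\mu}{H(u^\star)+H'(u^\star)(\hat v-u^\star)}\ge0\) with the argument in \(\interior Z_-\) and \(\mu\in Z_+^\ast\setminus\{0\}\), which forces a strictly negative value, a contradiction. Hence we may normalize \(\lambda_0=1\) and set \(\mu^\star\coloneqq\mu\). Choosing \(v=u^\star\) and \(r=z=0\) gives \(\innerprod{\mu^\star}{H(u^\star)}\ge0\). Since \(H(u^\star)\in Z_-\) and \(\mu^\star\in Z_+^\ast\), complementarity \eqref{eq:KKT_comp} follows. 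With \(z=0\) and \(r=0\), the inequality now reads \(\costp(v)\ge\costp(u^\star)-\innerprod{\nabla\costr(u^\star)+H'(u^\star)^\ast\mu^\star}{v-u^\star}\) for all \(v\), which is exactly \eqref{eq:KKT_inclusion} by the definition of the subdifferential.

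I expect the main obstacle to be Step 1 in its rigorous form: ensuring \(H(u_t)\in Z_-\) with only \(o(t)\) control, and correctly combining the strict Slater direction with an arbitrary direction so that the set \(\mathcal C\) is genuinely disjoint from the open cone. Once this is in place, the separation and normalization are routine.
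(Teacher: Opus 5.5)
Your proof is correct and follows the same route as the paper. You first show that $u^\star$ solves the convex linearized problem, using a first-order expansion of $H$ and mixing with the Slater point from \ref{CQ1}, and then obtain $\mu^\star$ by convex separation; the only difference is that you carry out the Hahn--Banach separation and the normalization $\lambda_0>0$ explicitly, where the paper cites the Slater-based duality result of Bonnans--Shapiro for that step.
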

\begin{proof}
    We first show that \(u^\star\) solves the convex linearized problem
    \begin{equation}
    \label{eq:convex_lin_kkt}
    \min_{w\in U}\ 
    \langle\nabla\costr(u^\star),w-u^\star\rangle_U+\costp(w)
    \quad\text{subject to}\quad
    H(u^\star)+H'(u^\star)(w-u^\star)\le_Z0.
    \end{equation}
    For an arbitrary \(w\in\dom\costp\) satisfying the linearized constraint in \eqref{eq:convex_lin_kkt}, we write \(w_\theta=(1-\theta)w+\theta v\), where \(v\) is from \ref{CQ1} and \(\theta\in(0,1]\).
    By convexity of the linearized constraint, $w_\theta$ is strictly feasible for \eqref{eq:convex_lin_kkt}.
    Differentiability of \(H\) and convexity of \(\dom\costp\) imply that \(u^\star+t(w_\theta-u^\star)\) is feasible for the original problem for all sufficiently small \(t>0\).
    Using local optimality and convexity of \(\costp\), dividing by \(t\), and then sending \(t\downarrow0\), we obtain
    \[
    0\le
    \langle\nabla\costr(u^\star),w_\theta-u^\star\rangle_U
    +\costp(w_\theta)-\costp(u^\star).
    \]
    Since \(\costp(w_\theta)\le(1-\theta)\costp(w)+\theta\costp(v)\), sending \(\theta\downarrow0\) proves the claim. Since \eqref{eq:convex_lin_kkt} has the strict feasible point \(v\in\dom\costp\), using  convex separation under Slater's condition, e.g., \cite[Prop.~2.106 and Thm~3.4]{BonnShap2000perturbation}, we can infer that there exists \(\mu^\star\in Z_+^*\) with \(\langle\mu^\star,H(u^\star)\rangle_{Z^*,Z}=0\) and
    \begin{equation}
    \label{eq:kkt_subgra_lin}
            \costp(w)-\costp(u^\star) + \langle\nabla\costr(u^\star)+H'(u^\star)^*\mu^\star,w-u^\star\rangle_U \ge0.
     \end{equation}
    Since $w\in\dom\costp$ was arbitrary, \eqref{eq:kkt_subgra_lin} is precisely the subgradient inequality corresponding to \eqref{eq:KKT_inclusion}.
\end{proof}

If \(Z=\R^q\), then \(\mu^\star\in \R_+^q\) and \eqref{eq:KKT_comp} form the usual componentwise complementarity system.
If \(Z=C(\overline K)\), then \(\mu^\star\) can be identified with a nonnegative measure in \(\mathcal{M}_+(\overline{K})\), and \(\langle \mu^\star,\,H(u^\star)\rangle_{Z^\ast,Z}=0\) encodes the complementarity between the measure multiplier and the pointwise constraint residual.

\section{Barrier regularization and inner subproblem analysis}\label{sec:barrier_inner}

We treat the inequality constraint \(H(u)\le_Z 0\) by a barrier approach.
In the two settings of interest, \(Z=\R^q\) (componentwise order) and \(Z=C(\overline K)\) (pointwise order), one has
\(\interior (Z_-)\neq\emptyset\) with respect to the norm topology of \(Z\) (Euclidean norm in \(\R^q\), sup-norm in \(C(\overline K)\)).

\paragraph*{Standing notation for the order and integration}\label{rem:standing-notation}

Throughout the paper, we use the unified notation for the two model cases \(Z=\R^q\) and \(Z=C(\overline K)\).
The partial order \(\le_Z\) is induced by \(Z_+\), i.e.,
$z_1\le_Z z_2 \iff z_2-z_1\in Z_+$.
We write
\[
    \Xi=\{1,\dots,q\}\quad\text{if }Z=\R^q,
    \qquad 
    \Xi=\overline K\quad\text{if }Z=C(\overline K),
\]
and identify \(z\in Z\) with the function \(\xi\mapsto z(\xi)\) on \(\Xi\).
In particular, we can write
\[
    z\le_Z 0 \iff z(\xi)\le 0 \ \ \forall \xi\in\Xi,
    \qquad  \text{ and }  \qquad
    z<_Z 0 \iff z(\xi)<0 \ \ \forall \xi\in\Xi.
\]
We denote by \(\mathbf{1}\in Z\) the order unit, i.e.,
\[
    \mathbf{1}=(1,\dots,1)\in\R^q,
    \qquad \text{ and }  \qquad
    \mathbf{1}(x)\equiv 1 \ \text{in } C(\overline K).
\]
We use the convention
\[
    \int_{\Xi} g(\xi)\mathrm{d}\meas(\xi)
    =
    \begin{cases}
    \displaystyle \sum_{i=1}^q g(i), & \text{if}~ \Xi=\{1,\dots,q\}\ \ (\meas=\text{counting measure}),\\[1ex]
    \displaystyle \int_{\overline K} g(x)\,\mathrm{d}\meas(x), & \text{if}~ \Xi=\overline K,
    \end{cases}
\]
where, in the case $\Xi=\overline{K}$, $\meas$ is a finite positive Borel measure on $\overline{K}$ with full support; if $K\subset\mathbb{R}^d$ is a bounded domain, one may take the Lebesgue measure \(\meas=\mathcal L^d|_{\overline{K}}\).
Accordingly, we have 
\[
    \meas(\Xi)=q \quad \text{if}~ \Xi=\{1,\dots,q\},
    \qquad \text{ and }  \qquad
    \meas(\Xi)=\meas(\overline K)\in(0,\infty) \quad  \text{if}~ \Xi=\overline K.
\]

\subsection{Barrier functionals}\label{sec:barrier-banach-lattices}

\begin{definition}\label{def:barrier-functional}
    A mapping \(\barrierfun :\interior (Z_-)\to\R\) is called a \emphdef{barrier functional} if the following hold:
    \begin{enumerate}[label=(B\arabic*),ref=(B\arabic*)]
    \item \label{B1} \(\barrierfun \) is convex and twice continuously Fr\'echet differentiable on \(\interior(Z_-)\).
    \item \label{B2} \(\nabla \barrierfun (z)\in Z_+^\ast\) for every \(z\in \interior(Z_-)\).
    \end{enumerate}
\end{definition}

In contrast to \cite[Assump.~2]{demarchi2024interior}, $\barrierfun$ need not be nonnegative, thus covering the important logarithmic barrier.
Moreover, unlike classical interior-point complexity theory for smooth conic problems, which exploits self-concordance of the barrier \cite{Nesterov1994Interior}, the barriers $\barrierfun$ considered here are not necessarily self-concordant in the Banach-lattice setting.
 
We now consider barrier functionals induced by scalar kernels.
Let \(\phi:(-\infty,0)\to\R\) be a scalar barrier kernel and define
\begin{equation}\label{eq:Bphi_def}
    \barrierfun_\phi(z)\coloneqq \int_{\Xi} \phi\bigl(z(\xi)\bigr)\,\mathrm{d}\meas(\xi),
    \qquad
    z\in \interior (Z_-).
\end{equation}
For \(\phi\in C^2((-\infty,0))\), the first and second Fr\'echet derivatives of \(\barrierfun _\phi\) are given in \cref{tab:Bphi-derivatives}. In particular, if \(\phi''\ge 0\) on \((-\infty,0)\), then \(\mathrm{D}^2\barrierfun _\phi(z)[h,h]\ge 0\) for all \(h\in Z\), and hence \(\barrierfun _\phi\) is convex.
If, in addition, \(\phi'(t)>0\) for all \(t<0\), then \(\nabla \barrierfun _\phi(z)\in Z_+^\ast\), since
$
    \langle \nabla \barrierfun _\phi(z),h\rangle_{Z^\ast,Z}
    =
    \mathrm{D}\barrierfun _\phi(z)[h]
    \ge 0$
for all $h\in Z_+$.

As concrete examples of scalar kernels, we use the logarithmic and power functions listed in \cref{tab:barrier-derivatives}, which define valid barrier functionals in the sense of \cref{def:barrier-functional}.
 
\begin{table}[tbh]
    \centering%
    \caption{First and second Fr\'echet derivatives of \(\barrierfun _\phi\) in the finite- and infinite-dimensional settings.}%
    \label{tab:Bphi-derivatives}%
    \begin{tabular}{@{}lcc@{}}
        \toprule
        & \(Z=\R^q\) & \(Z=C(\overline K)\) \\
        \midrule
        \(\mathrm{D}\barrierfun _\phi(z)[h]\)
        &
        \(\displaystyle \sum_{i=1}^q \phi'(z_i)\,h_i\)
        &
        \(\displaystyle \int_{\overline K}\phi'(z(x))\,h(x)\,\mathrm{d}\meas(x)\)
        \\
        \(\mathrm{D}^2\barrierfun _\phi(z)[h,k]\)
        &
        \(\displaystyle \sum_{i=1}^q \phi''(z_i)\,h_i\,k_i\)
        &
        \(\displaystyle \int_{\overline K}\phi''(z(x))\,h(x)\,k(x)\,\mathrm{d}\meas(x)\)
        \\
        \bottomrule
    \end{tabular}
\end{table}

\begin{table}[tbh]
    \centering%
    \caption{Barrier kernels and their derivatives on \((-\infty,0)\).}%
    \label{tab:barrier-derivatives}%
    \begin{tabular}{lccc}%
        \toprule
        kernel & \(\phi(t)\) & \(\phi'(t)\) & \(\phi''(t)\) \\
        \midrule
        \(\phi_{\log}\)
        & \(-\log(-t)\)
        & \(-\dfrac{1}{t}\)
        & \(\dfrac{1}{t^{2}}\) \\[1ex]
        \(\phi_{\rm pow}\)
        & \((-t)^{-p},\ p>0\)
        & \(p(-t)^{-(p+1)}\)
        & \(p(p+1)(-t)^{-(p+2)}\) \\
        \bottomrule
    \end{tabular}
\end{table}

\begin{lemma}\label{lem:log-power-are-barriers}
    For the kernels $\phi$ in \cref{tab:barrier-derivatives}, the barrier functional \(\barrierfun_\phi\) defined in \eqref{eq:Bphi_def} satisfies \ref{B1}--\ref{B2}.
\end{lemma}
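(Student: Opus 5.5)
The plan is to reduce both properties to scalar facts about the kernels, using the discussion right after \eqref{eq:Bphi_def}: if $\phi\in C^2((-\infty,0))$ with $\phi''\ge0$ and $\phi'>0$, then $\barrierfun_\phi$ is convex with $\nabla\barrierfun_\phi(z)\in Z_+^\ast$. The only genuinely new work is in the case $Z=C(\overline K)$. There we must check that $\barrierfun_\phi$ is well defined and twice \emph{continuously} Fr\'echet differentiable on $\interior(Z_-)$, with derivatives given by \cref{tab:Bphi-derivatives}. For $Z=\R^q$ this is immediate, since $\barrierfun_\phi$ is a finite sum of $C^2$ functions of the coordinates.

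\textbf{Step 1 (scalar facts).} From \cref{tab:barrier-derivatives}, for $t<0$ we have $\phi_{\log}'(t)=1/(-t)>0$ and $\phi_{\log}''(t)=t^{-2}>0$. Likewise $\phi_{\rm pow}'(t)=p(-t)^{-(p+1)}>0$ and $\phi_{\rm pow}''(t)=p(p+1)(-t)^{-(p+2)}>0$ for $p>0$. Both kernels are $C^\infty$ on $(-\infty,0)$.

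\textbf{Step 2 (localization away from the boundary).} Fix $z\in\interior(Z_-)$. In $C(\overline K)$ we have $\interior(Z_-)=\{z:\max_{\overline K}z<0\}$, and by compactness of $\overline K$ there is $\delta>0$ with $z(\xi)\le-2\delta$ for all $\xi\in\Xi$. For $\|h\|_\infty\le\delta$ the values $z(\xi)+h(\xi)$ then stay in the compact interval $I=[-\|z\|_\infty-\delta,\,-\delta]$, on which $\phi,\phi',\phi''$ are bounded and $\phi''$ is uniformly continuous. In particular $\barrierfun_\phi(z)$ is finite, because $\meas(\Xi)<\infty$ and $\phi\circ z$ is continuous and bounded.

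\textbf{Step 3 (Fr\'echet $C^2$).} Apply Taylor's formula with integral remainder pointwise:
\[
\phi(z+h)-\phi(z)-\phi'(z)h-\tfrac12\phi''(z)h^2=\int_0^1(1-s)\bigl(\phi''(z+sh)-\phi''(z)\bigr)h^2\,\mathrm{d}s.
\]
Integrate over $\Xi$ and bound the right-hand side by $\tfrac12\,\meas(\Xi)\,\omega(\|h\|_\infty)\|h\|_\infty^2$, where $\omega$ is the modulus of continuity of $\phi''$ on $I$. This gives second-order Fr\'echet differentiability with the derivatives listed in \cref{tab:Bphi-derivatives}; the first-order remainder is handled analogously. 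Continuity of $z\mapsto\mathrm{D}^2\barrierfun_\phi(z)$ in operator norm follows from
\[
\bigl|\mathrm{D}^2\barrierfun_\phi(z')[h,k]-\mathrm{D}^2\barrierfun_\phi(z)[h,k]\bigr|\le\meas(\Xi)\,\sup_\Xi|\phi''(z')-\phi''(z)|\,\|h\|_\infty\|k\|_\infty,
\]
together with uniform continuity of $\phi''$ on $I$ for $\|z'-z\|_\infty\le\delta$. This gives \ref{B1}, where convexity comes from $\mathrm{D}^2\barrierfun_\phi(z)[h,h]=\int\phi''(z)h^2\,\mathrm{d}\meas\ge0$ on the convex open set $\interior(Z_-)$.

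\textbf{Step 4 (\ref{B2}).} For $h\in Z_+$ we have $\mathrm{D}\barrierfun_\phi(z)[h]=\int\phi'(z)h\,\mathrm{d}\meas\ge0$ since $\phi'>0$, so $\nabla\barrierfun_\phi(z)\in Z_+^\ast$.

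The main obstacle is Step 3, where the estimates must be uniform in $\xi$ near a fixed interior point. This is exactly what Step 2 secures: in the sup-norm, interior points are uniformly bounded away from $0$, which keeps the kernels' singularity at $t=0$ out of reach.
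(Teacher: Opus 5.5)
Your proposal is correct and follows the same route as the paper: the uniform separation of each $z\in\interior(Z_-)$ from $0$ justifies differentiation under the integral, and convexity and \ref{B2} then follow from $\phi''>0$ and $\phi'>0$; your Steps~2--3 spell out what the paper compresses into one line. One small tightening: a second-order expansion of $\barrierfun_\phi$ at a single point does not by itself give twice Fr\'echet differentiability, so apply the same integral-remainder argument to $\phi'$ to get $\|\mathrm{D}\barrierfun_\phi(z+h)-\mathrm{D}\barrierfun_\phi(z)-\mathrm{D}^2\barrierfun_\phi(z)[h,\cdot]\|_{Z^\ast}\le\meas(\Xi)\,\omega(\|h\|_\infty)\,\|h\|_\infty$, which together with your operator-norm continuity estimate yields \ref{B1} directly.
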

\begin{proof}
    Both kernels are $C^2$ on $(-\infty,0)$.
    Since each $z\in\interior Z_-$ is uniformly separated from zero, differentiation under the integral shows that $\barrierfun_\phi$ is twice continuously Fr\'echet differentiable.
    The inequalities $\phi''>0$ and $\phi'>0$ imply convexity and positivity of its derivative, respectively.
    Thus, \ref{B1} and \ref{B2} hold.
\end{proof}

The abstract convergence results use only \ref{B1}--\ref{B2}.
The logarithmic and power kernels enter separately only through the quantitative curvature and complementarity estimates below.

\begin{remark}\label{rem:barrier-boundary}
    For $Z=\R^q$, both the logarithmic and power barriers satisfy
    \begin{equation}
    \label{eq:Bar_Blowup}
        z_j\in \operatorname{int} Z_-,
        \quad
        z_j\to z\in\partial Z_-
        \quad\implies\quad
        \barrierfun_\phi(z_j)\to+\infty.
        \end{equation}
    For $Z=C(\overline K)$, this implication generally fails, since the limit may touch the boundary only on a set of measure zero while the barrier values remain bounded.
    Nevertheless, if $z_j\in\interior Z_-$ converges uniformly to $z$ and
    \[
        \meas\bigl(\{x\in\overline K:z(x)=0\}\bigr)>0,
    \]
    then $\barrierfun_\phi(z_j)\to+\infty$ for both kernels.
    This follows from Fatou's lemma, after shifting the logarithmic integrands by a common constant if necessary.
    Thus, bounded barrier values imply $z<0$ $\meas$-almost everywhere, but not necessarily $z\in\interior Z_-$.
    For power barriers, pointwise contact with the constraint boundary can be excluded under additional regularity.
    Following \cite[Lem.~7.1]{Schiela2009barrier}, if the relevant constraint functions are uniformly bounded in \(C^{0,\alpha}(\overline K)\) and \(\alpha p>d\), then
    $\barrierfun_{\phi_p}(z_j)\to+\infty$
    whenever \(z_j\) approaches the boundary.
    Hence, under this regularity condition, bounded power-barrier values exclude boundary contact even on sets of measure zero.
    In contrast, no analogous conclusion holds in general for the logarithmic barrier.
    
     The required regularity is natural in many PDE-constrained optimization problems, including \eqref{eq:Primal}. Indeed, for $S=I$, if \(K\subset\R^d\), \(d\le 3\), and the constraint function belongs to \(H^2(K)\) (see \cref{prop:unified_compact_stability_affine_Bf_Neumann}\ref{stAffE:I}), then the Sobolev--Morrey embedding \cite[Thm.~9.12]{Brez2011functional} yields $H^2(K)\hookrightarrow C^{0,\alpha}(\overline K)$ for every $\alpha<2-\frac d2$.
    Consequently, one can choose \(\alpha\) such that \(\alpha p>d\) whenever \(p>2\) for \(d=2\) and \(p>6\) for \(d=3\).
\end{remark}

\subsection{Inner loop: Barrier subproblem}

In this section, we investigate the barrier subproblems \eqref{eq:Reduced_barrier_problem}, which approximate \eqref{eq:Reduced} by replacing the inequality constraint with a barrier term in the objective.
The approximation is controlled by the \emph{barrier parameter} $\nu > 0$.
Rewriting subproblem \eqref{eq:Reduced_barrier_problem} as
\begin{equation}\label{eq:Reduced_barrier_problem_2}
    \minimize_{u \in \Us}\;
    \costb(u)\coloneqq \costs_\nu(u) + \costp(u)
    \qquad\text{where}\qquad
    \costs_\nu
    \coloneqq 
    u \mapsto \costr(u) + \nu \barrierfun (H(u))
\end{equation}
highlights the structured form ``smooth plus prox-friendly'' and appears amenable to methods of proximal-gradient type, as noticed in \cite{demarchi2024interior}.
Working on the strictly feasible domain $\Usad^\circ$, see \eqref{eq:strict-feasible-domain}, we assign $\costb=+\infty$ outside $\Usad^\circ$ to reject infeasible trial points.
For each subproblem \eqref{eq:Reduced_barrier_problem_2} the interior-point algorithm requires only approximate stationary points (instead of exact minimizers), which the inner proximal-gradient algorithm can obtain in finitely many steps.

\subsubsection{Boundedness of the admissible controls}
Boundedness of the inner iterates can be ensured by bounded control constraints or by a quadratic control cost \(\|u\|_{\Us}^{2}\), provided that the barrier objective values remain bounded above. In the latter case, we assume
\begin{equation}\label{eq:Cpde}\tag{C}
    \barrierfun (H(u)) \ge -\kappa\bigl(1+\|u\|_{\Us}\bigr),
    \qquad 
    \forall u \text{ with } H(u)\in \interior (Z_-),\ \|u\|_{\Us}\ge R_0,
\end{equation}
where $\kappa,R_0\ge0$. This lower bound preserves coercivity of an objective with positive quadratic lower growth. For the power barrier, \eqref{eq:Cpde} holds with $\kappa=0$.
For the logarithmic barrier, it follows from
\[
    -H(u)\le_Z
    M(1+\|u\|_{\Us})^d e^{a\|u\|_{\Us}}\mathbf1,
\]
where $M\ge1$ and $a,d\ge0$, since
\[
    -\log(-H(u))
    \ge_Z
    -\bigl[\log M+d\log(1+\|u\|_{\Us})
             +a\|u\|_{\Us}\bigr]\mathbf1.
\]
For an integral barrier, we assume that the underlying measure is finite.

In particular, for the pointwise state constraint $H(u)=\Sctl(u)-\bar y$ with bounded $\bar y$, the estimate
\begin{equation}\label{eq:ctr_esti}
    \|\Sctl(u)\|_Z\le M(1+\|u\|_{\Us})
\end{equation}
implies the above slack bound with $d=1$ and $a=0$, after increasing $M$ if necessary.

\subsubsection{Lipschitz continuity of the gradient for the smooth part}

In this section, we study the continuity of the gradient of the smooth part \(\costs_\nu\) in \eqref{eq:Reduced_barrier_problem_2} and derive bounds for the corresponding Lipschitz constant.

In many PDE-constrained optimization problems, the control-to-state map \(\Sctl\colon U\to Y\) is twice continuously Fr\'echet differentiable, and its first and second derivatives are uniformly bounded on bounded subsets of its domain.
This motivates the following standing assumption.

\begin{assumption}\label{ass:Lip}
    The following condition holds:
    \begin{enumerate}[label=(G\arabic*), start=3]
    \item \label{G3}%
    For every bounded set \(U_b \subseteq \mathcal{D}\), the mappings \(\costrD\) and \(H'\) are Lipschitz continuous on \(U_b\), with constants \(L_{\costr}(U_b)\) and \(L_{H'}(U_b)\), respectively.
    Moreover, $\sup_{u\in U_b}\|H'(u)\|_{\mathcal{L}(U,Z)} \le M_{H'}(U_b)$ for some \(M_{H'}(U_b)>0\).
    \end{enumerate}
\end{assumption}

Let \(B\subset \mathcal{D}\) be a bounded neighborhood such that \(H(B)\) lies in the strictly feasible set
\[
    \mathcal{D}_{a,b}
    \coloneqq
    \{\,z\in Z:\ -b\mathbf{1}\le_Z z \le_Z -a\mathbf{1}<_Z 0\,\},
    \qquad a,b>0.
\]
For a barrier functional \(\barrierfun_\phi\) with \(\phi\in C^2((-\infty,0))\), the gradient of $\costs_\nu$ in \eqref{eq:Reduced_barrier_problem_2} is given by
\[
    \nabla \costs_\nu(u)
    =
    \nabla \costr(u) + \nu\,H'(u)^{*}\nabla \barrierfun _\phi(H(u))
    =
    \nabla\costr(u) + \nu\,H'(u)^{*}\phi'\big(H(u)\big),
\]
where the last expression is understood via the standing integration convention (counting measure in \(Z=\R^q\), and \(\meas\) on \(\overline K\) in \(Z=C(\overline K)\)).
Since 
\[
    M_{\phi'}(a,b) \coloneqq \sup_{t\in[-b,-a]}|\phi'(t)|,
    \qquad
    L_\phi(a,b) \coloneqq \sup_{t\in[-b,-a]}|\phi''(t)|
\]
are finite, it follows from \ref{G3} that \(\nabla\costs_\nu\) is Lipschitz continuous on \(B\), with constant
\begin{equation}\label{eq:Lipschitz-smooth-nu}
    \Lsmooth(\nu; a,b)
    \coloneqq
    L_{\costr}(B)
    + \nu c_{\Xi} \Big[
    L_{H'}(B)\,M_{\phi'}(a,b) + M_{H'}(B)\,L_\phi(a,b)\,L_H(B)
    \Big],
\end{equation}
where \(L_H(B)\) is a Lipschitz constant of \(H\) on \(B\).
Here $c_\Xi=\meas(\Xi)$ for $C(\overline K)$ with the sup norm, while $c_\Xi=\max\{1,\sqrt q\}$ is a sufficient common factor for Euclidean $\R^q$.
These constants account for the norm of the integral derivative and are independent of $\nu$.

If $H$ is uniformly bounded on the regions under consideration, the upper slack bound $b$ can be chosen independently of $\nu$.
Provided that the remaining constants are also uniform, the dependence in \eqref{eq:Lipschitz-smooth-nu} is determined by the explicit factor $\nu$ and the lower slack bound $a=a(\nu)$.

In the next lemma, we derive corresponding upper bounds on the Lipschitz constants and quantify their blow-up rates.

\begin{table}[tbh]%
    \centering%
    \caption{Canonical barrier scalings as \(\nu\downarrow 0\).}%
    \label{tab:scaling}%
    \begin{tabular}{lcccc}%
        \toprule
        kernel 
        & \(a(\nu)\) 
        & \(M_{\phi'}(a,b)\) 
        & \(L_\phi(a,b)\) 
        & \(\nu\,L_\phi(a,b)\) \\
        \midrule
        \(\phi_{\log}(t)=-\log(-t)\)
        & \(a(\nu)\simeq \nu\)
        & \(\nu^{-1}\)
        & \(\nu^{-2}\)
        & \(\nu^{-1}\)
        \\[1mm]
        \(\phi_{\rm pow}(t)=(-t)^{-p}\), \(p>0\)
        & \(a(\nu)\simeq \nu^{1/(p+1)}\)
        & \(\nu^{-1}\)
        & \(\nu^{-(p+2)/(p+1)}\)
        & \(\nu^{-1/(p+1)}\)
        \\
        \bottomrule
    \end{tabular}
\end{table}

Since the inner proximal-gradient complexity depends on the Lipschitz constant of $\nabla \costs_\nu$, we now quantify how this constant grows as the barrier parameter decreases, establishing the $\nu$-dependent bounds that feed directly into the total complexity analysis.
In \cref{sec:overall_complexity}, we distinguish slack bounds at outer endpoints from the inner-trajectory bounds required to use these estimates in a complexity argument.

\begin{lemma}\label{lem:central-path-Lip-onesided}
    Assume the hypotheses leading to \eqref{eq:Lipschitz-smooth-nu} hold on a neighborhood \(B\subset U_b\), and that \(H(B)\subset\mathcal D_{a(\nu),b}\) for some \(a(\nu)>0\) and some fixed \(b>0\).
    Then, as \(\nu\downarrow0\),
    \[
    \Lsmooth(\nu;a(\nu),b)
    \le
    L_{\costr}(B)+
    \begin{cases}
    \bigO(\nu^{-1}), 
    & \text{if }\phi(t)=-\log(-t)\ \text{and } a(\nu)\ge c_1\nu,\\[1.5mm]
    \bigO\bigl(\nu^{-1/(p+1)}\bigr), 
    & \text{if }\phi(t)=(-t)^{-p},\ p>0,\ \text{and } a(\nu) \ge c_1\nu^{1/(p+1)},
    \end{cases}
    \]
    for some \(c_1>0\) and all sufficiently small \(\nu\).
\end{lemma}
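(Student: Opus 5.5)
The plan is to substitute the explicit kernel derivatives from \cref{tab:barrier-derivatives} into the two $\nu$-dependent terms of \eqref{eq:Lipschitz-smooth-nu}. On $[-b,-a]$ both $|\phi'|$ and $|\phi''|$ are increasing in $-t^{-1}$, so their suprema are attained at $t=-a$. This gives
\[
M_{\phi'}(a,b)=\phi'(-a),\qquad L_\phi(a,b)=\phi''(-a).
\]
Since $B$ is fixed, the constants $L_{H'}(B)$, $M_{H'}(B)$, $L_H(B)$ and $c_\Xi$ do not depend on $\nu$. Existence of a finite $L_H(B)$ follows from \ref{G3} via the mean value inequality on a convex bounded set containing $B$; if $B$ is not convex, one first enlarges it to its convex hull within $\mathcal D$. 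The estimate then reduces to bounding
\[
\nu\,c_\Xi\bigl[L_{H'}(B)\,\phi'(-a(\nu))+M_{H'}(B)L_H(B)\,\phi''(-a(\nu))\bigr].
\]

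For the logarithmic kernel, $\phi'(-a)=a^{-1}$ and $\phi''(-a)=a^{-2}$. Using $a(\nu)\ge c_1\nu$, the first-derivative term is at most $\nu\cdot(c_1\nu)^{-1}=c_1^{-1}=\bigO(1)$. The curvature term is at most $\nu\,(c_1\nu)^{-2}=c_1^{-2}\nu^{-1}$. Together they give $\bigO(\nu^{-1})$ as $\nu\downarrow0$.

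For the power kernel, $\phi'(-a)=p\,a^{-(p+1)}$ and $\phi''(-a)=p(p+1)a^{-(p+2)}$. Using $a(\nu)\ge c_1\nu^{1/(p+1)}$, the first-derivative term is at most $\nu\, p\, c_1^{-(p+1)}\nu^{-1}=\bigO(1)$. The curvature term is at most
\[
\nu\,p(p+1)c_1^{-(p+2)}\nu^{-(p+2)/(p+1)}=\bigO\bigl(\nu^{-1/(p+1)}\bigr).
\]
This reproduces the last column of \cref{tab:scaling}.

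In both cases the $\bigO(1)$ contribution is absorbed into the dominant blow-up term for all sufficiently small $\nu$, since $\nu^{-1}\ge1$ and $\nu^{-1/(p+1)}\ge1$ for $\nu\le1$. Adding $L_{\costr}(B)$ then gives the claim.

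There is no real obstacle here. The only care needed is bookkeeping: the constants in \eqref{eq:Lipschitz-smooth-nu}, including $L_{\costr}(B)$, must be uniform in $\nu$, which holds because $B$ and $b$ are fixed in the hypothesis. Also, only the lower bound on $a(\nu)$ is used, since $\phi'$ and $\phi''$ are monotone in the slack, so a larger slack only improves the bound.
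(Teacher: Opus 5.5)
Your proposal is correct and matches the paper's proof: the paper inserts the barrier-specific values of $M_{\phi'}(a(\nu),b)$ and $L_\phi(a(\nu),b)$ from \cref{tab:scaling}, which are attained at $t=-a(\nu)$, into \eqref{eq:Lipschitz-smooth-nu} and then uses the lower bound on $a(\nu)$; your computation simply makes explicit the $\bigO(1)$ first-derivative term and the dominant curvature term. The convex-hull enlargement of $B$ is unnecessary (and the hull need not stay inside $\mathcal D$), since a finite $L_H(B)$ is already part of the hypotheses leading to \eqref{eq:Lipschitz-smooth-nu}.
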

\begin{proof}
    Insert the barrier-specific expressions for \(M_{\phi'}(a(\nu),b)\) and \(L_\phi(a(\nu),b)\) from \cref{tab:scaling} into \eqref{eq:Lipschitz-smooth-nu}, and use the lower bound for \(a(\nu)\).
\end{proof}

Each outer iteration of the interior-point method approximately solves a subproblem with smooth part \(\costs_\nu\) by a proximal-gradient method, whose iteration complexity typically depends on the Lipschitz constant of \(\nabla \costs_\nu\).
\cref{lem:central-path-Lip-onesided} therefore quantifies possible worst-case deterioration of the smoothness bound, but it does not assert divergence of the actual Lipschitz constant.
However, barrier curvature may grow even when the associated multipliers remain bounded.

\subsection{Inner algorithm and optimality conditions}

The stationarity condition for subproblem~\eqref{eq:Reduced_barrier_problem_2} can be expressed as
$0 \in \partial \costb(u_\nu)$.
For computational purposes, we consider an \emph{approximate stationarity condition}, given by  
\begin{equation}\label{eq:Termination_subproblem}
    \dist\bigl(0, \partial \costb(u_\nu) \bigr) \le \varepsilon_\nu
\end{equation}
for a prescribed tolerance $\varepsilon_\nu > 0$.
To find such an $\varepsilon_\nu$-stationary point $u_\nu \in \dom\costb$, proximal-gradient methods are the natural choice when the gradient of $\costs_\nu$ and the proximal operator of $\costp$ are readily available.
As a baseline scheme that allows us to derive complexity bounds, we employ the nonmonotone forward–backward splitting method studied in \cite{azmi2025nonmonotone}.
The scheme is outlined in \cref{alg:NMPG}, which incorporates a backtracking mechanism from strictly feasible initial points.
This strategy requires that the proximal-gradient update
\begin{equation}\label{eq:prox_grad_eval}
    u_{j+1} \coloneqq
            \prox_{\ssizek^{-1}\costp}
            \left(
                u_j - \ssizek^{-1} \nabla \costs_\nu(u_j)
            \right)
\end{equation}
satisfies the nonmonotone sufficient decrease condition
\begin{equation}\label{eq:ls_ineq}
    \costb(u_{j+1}) + \delta \ssizek \|u_{j+1} - u_j\|_{\Us}^2
    \leq
    \max_{0 \le \ell \le m(j)} \costb(u_{j-\ell})
\end{equation}
for some fixed $\delta \in (0,1)$.
Here, the memory function $m\colon\N_0 \to \N_0$ is defined by
\begin{equation}\label{eq:ls_memory}
    m(0) = 0, 
    \qquad
    m(j) = \min\{\, m(j-1) + 1,\, \mmax \,\}, 
    \quad j \in \N,
\end{equation}
for a given upper bound $\mmax \in \N_0$.

\begin{algorithm2e}
    \caption{Nonmonotone proximal-gradient method for \eqref{eq:Reduced_barrier_problem_2}}
    \label{alg:NMPG}
    \DontPrintSemicolon
    \KwIn{strictly feasible $u_0\in \Usad^\circ$, barrier parameter $\nu > 0$, tolerance $\varepsilon_\nu > 0$}
    \KwData{select $\delta \in (0,1)$, $\mmax \in \N_0$, $\eta > 1$, $\ssizeub \geq \ssizelb > 0$}
    \KwOut{$\varepsilon_\nu$-stationary point $u_\nu \in \Usad^\circ$}
    \For{$j=0,1,2\ldots$}{
        choose $\ssizeinit \in [\ssizelb, \ssizeub]$ and set $\ssizek\gets \ssizeinit$\;
        compute the trial point $u_j^+ \gets \prox_{\ssizek^{-1}\costp}
            \left(
                u_j - \ssizek^{-1} \nabla \costs_\nu(u_j)
            \right)$\label{step:NMPG:u_trial}\;
        \If{$u_j^+ \notin\Usad^\circ$ or \eqref{eq:ls_ineq} fails}{set $\ssizek \gets \eta\ssizek$ and go to Step~\ref{step:NMPG:u_trial}}
        set $u_{j+1} \gets u_j^+$\label{step:NMPG:u_update}\;
        \If{termination condition $\eqref{eq:Termination_subproblem_practical}$ holds}{\Return{$u_\nu\gets u_{j+1}$}}
    }
\end{algorithm2e}

Starting from an initial, bounded estimate $\ssizeinit \in [\ssizelb,\ssizeub]$, the regularization parameter $\ssizek$ is increased by a constant factor until the proximal-gradient candidate $u_{j+1}$ computed as \eqref{eq:prox_grad_eval} satisfies \eqref{eq:ls_ineq}.

\begin{lemma}[Finite termination of the inner backtracking]
    Let \(u_j\in \Usad^\circ\).
    Then there exists \(\bar\ssizek<\infty\) such that every \(\ssizek\ge\bar\ssizek\) produces a proximal-gradient point \(u_{j+1}\in \Usad^\circ\) according to \eqref{eq:prox_grad_eval} that satisfies the decrease condition \eqref{eq:ls_ineq}.
\end{lemma}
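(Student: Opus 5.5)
The plan is to combine three ingredients: strict feasibility is preserved for small steps, $\nabla\costs_\nu$ is locally Lipschitz near $u_j$, and the standard descent lemma for proximal-gradient steps.

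\emph{Step 1: a local neighborhood.} Since $u_j\in\Usad^\circ$, we have $H(u_j)\in\interior Z_-$, so $H(u_j)\le_Z -2a\mathbf 1$ for some $a>0$. Set $b\coloneqq\|H(u_j)\|_Z+1$. By continuity of $H$ on the open set $\mathcal D$ (\ref{G2}), there is a radius $r>0$ such that $B\coloneqq \{u:\|u-u_j\|_\Us< r\}\subset\mathcal D$ and $H(B)\subset\mathcal D_{a,b}$. On $B$, \eqref{eq:Lipschitz-smooth-nu} then shows that $\nabla\costs_\nu$ is Lipschitz with some constant $L\coloneqq\Lsmooth(\nu;a,b)$.

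\emph{Step 2: the trial point stays in $B$.} We have $u_{j+1}=\prox_{\ssizek^{-1}\costp}(u_j-\ssizek^{-1}\nabla\costs_\nu(u_j))$. Its optimality condition reads
\[
\ssizek(u_j-u_{j+1})-\nabla\costs_\nu(u_j)\in\partial\costp(u_{j+1}).
\]
Combining it with the subgradient inequality at $u_j$ would require $\partial\costp(u_j)\neq\emptyset$, which is not guaranteed. Instead, I use the minimizing property of the prox: compare the prox objective at $u_{j+1}$ with its value at $u_j$. This gives
\[
\costp(u_{j+1})+\innerprod{\nabla\costs_\nu(u_j)}{u_{j+1}-u_j}+\tfrac{\ssizek}{2}\|u_{j+1}-u_j\|^2\le\costp(u_j).
\]
Next, lower bound $\costp$ by an affine minorant, $\costp(u)\ge \costp(u_j)-c_0-c_1\|u-u_j\|$. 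Such a minorant exists because $\costp$ is proper, convex and lsc (\ref{G1}); if $u_j\notin\dom\costp$ the problem is void, since $u_j\in\Usad^\circ\subset\dom\costp$. This yields a quadratic inequality in $\|u_{j+1}-u_j\|$, from which
\[
\|u_{j+1}-u_j\|\le \frac{2(\|\nabla\costs_\nu(u_j)\|+c_1)}{\ssizek}+\sqrt{\frac{2c_0}{\ssizek}}\;\longrightarrow\;0 \quad\text{as } \ssizek\to\infty.
\]
Hence, for $\ssizek$ large, $u_{j+1}\in B$. This gives $u_{j+1}\in\dom\costp$ and $H(u_{j+1})\le_Z-a\mathbf 1$, so $u_{j+1}\in\Usad^\circ$, and the whole segment $[u_j,u_{j+1}]$ lies in the convex set $B$.

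\emph{Step 3: sufficient decrease.} Since the segment lies in $B$, the descent lemma applies:
\[
\costs_\nu(u_{j+1})\le\costs_\nu(u_j)+\innerprod{\nabla\costs_\nu(u_j)}{u_{j+1}-u_j}+\tfrac L2\|u_{j+1}-u_j\|^2.
\]
Adding this to the prox inequality from Step 2 gives
\[
\costb(u_{j+1})+\tfrac{\ssizek-L}{2}\|u_{j+1}-u_j\|^2\le\costb(u_j)\le\max_{0\le\ell\le m(j)}\costb(u_{j-\ell}).
\]
So \eqref{eq:ls_ineq} holds as soon as $(\ssizek-L)/2\ge\delta\ssizek$, that is, $\ssizek\ge L/(1-2\delta)$ when $\delta<1/2$.

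For $\delta\in[1/2,1)$ this coefficient argument fails. I would then sharpen the prox inequality using the strong convexity of the prox objective (modulus $\ssizek$): comparing at $u_j$ gives the term $\ssizek\|u_{j+1}-u_j\|^2$ instead of $\tfrac{\ssizek}{2}\|u_{j+1}-u_j\|^2$. This yields the decrease coefficient $\ssizek-L/2$, which is at least $\delta\ssizek$ once $\ssizek\ge L/(2(1-\delta))$.

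Setting $\bar\ssizek$ to be the maximum of this threshold and the threshold from Step 2 completes the argument.

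The main obstacle is Step 2. The localization needs a bound on the prox step that does not assume $u_j\in\interior\dom\costp$ or $\partial\costp(u_j)\neq\emptyset$, and the affine-minorant argument above is what I would use to get it.
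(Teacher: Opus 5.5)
Your proof is correct and follows essentially the same route as the paper's sketch. You show that the trial point tends to $u_j$ as $\alpha_j\to\infty$, obtain strict feasibility from continuity of $H$ and openness of $\interior Z_-$, and then apply the descent lemma on a neighborhood where $\nabla\costs_\nu$ is Lipschitz. The differences are minor: you prove $u_{j+1}\to u_j$ directly through an affine minorant of $\costp$ (which also absorbs the shift $-\alpha_j^{-1}\nabla\costs_\nu(u_j)$) instead of citing \cite[Prop.~12.33]{BausComb2017convex}, and you make the threshold $\alpha_j\ge L/(2(1-\delta))$ explicit via strong convexity of the prox subproblem, which covers every $\delta\in(0,1)$.
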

\begin{proof}
    We only sketch the proof, which follows \cite[Lem.~13]{demarchi2024interior}.
    Set $t=\ssizek^{-1}$ and $u(t) \coloneqq \prox_{t\costp}\left( u_j - t \nabla \costs_\nu(u_j)\right)$.
    By the standard convergence result for proximal mappings as the parameter tends to zero, $u(t)\to u_j$ strongly in $U$ as $t\downarrow 0$; see \cite[Prop.~12.33]{BausComb2017convex}.
    Since $u(t)\in\dom\costp\subset\mathcal{D}$ and $H:\mathcal{D}\to Z$ is continuous, while $H(u_j)<_Z 0$, we have $H(u(t)) <_Z 0$ for all sufficiently small $t$; hence $u(t)\in \Usad^\circ$.
    On a neighborhood of $u_j$ contained in the local smoothness region, the standard proximal-gradient descent estimate gives \eqref{eq:ls_ineq} for all sufficiently small $t$, equivalently for all sufficiently large $\alpha_j$.
 Thus, there exists $\bar\ssizek<\infty$ such that every $\ssizek\ge\bar\ssizek$ is accepted.
\end{proof}

Since criterion \eqref{eq:Termination_subproblem} involves computing the subdifferential $\partial \costb(u_j)$, it is not a practical termination condition in general.
However, the proximal-gradient update rule \eqref{eq:prox_grad_eval} yields the (necessary optimality) inclusion
\begin{equation}\label{eq:prox_grad_inclusion}
    0 \in \nabla \costs_{\nu}(u_j) + \partial\costp(u_{j+1}) + \ssizek (u_{j+1}-u_j)
\end{equation}
for each $j\in\N$, which leads to
the upper bound
\begin{equation*}
    \dist\bigl(0, \partial \costb(u_{j+1}) \bigr) \leq \| \nabla \costs_{\nu}(u_{j+1}) - \nabla \costs_{\nu}(u_j) - \ssizek (u_{j+1}-u_j) \|_{\Us}
\end{equation*}
on the stationarity measure.
Thus, while executing \cref{alg:NMPG}, one can adopt the practical termination condition
\begin{equation}\label{eq:Termination_subproblem_practical}
    \| \nabla \costs_{\nu}(u_{j+1}) - \nabla \costs_{\nu}(u_j) - \ssizek (u_{j+1}-u_j) \|_{\Us} \leq \varepsilon_\nu
\end{equation}
to certify $\varepsilon_\nu$-stationarity according to \eqref{eq:Termination_subproblem} and terminate with the iterate $u_{j+1} \in \Usad^\circ$.

\begin{remark}
We note that, if \eqref{eq:Bar_Blowup} holds, \cref{step:NMPG:u_trial} automatically yields a strictly feasible point $u_j^+\in\Usad^\circ$, and we proceed directly to \cref{step:NMPG:u_update} without an additional feasibility check. \cref{alg:NMPG} enforces strict feasibility of every accepted inner iterate. In the finite-dimensional constraint setting $Z=\R^q$, the logarithmic and power barriers satisfy \eqref{eq:Bar_Blowup}; hence, a finite barrier value guarantees strict feasibility. However, for integral logarithmic barriers in $C(\overline K)$, \eqref{eq:Bar_Blowup} may fail when the constraint function touches zero only on a set of zero measure. Therefore, a finite barrier value does not guarantee strict feasibility, which is explicitly checked in
\cref{alg:NMPG}. The outer convergence analysis assumes that each inner solve returns a strictly feasible point satisfying the prescribed approximate stationarity and decrease conditions.

For power barriers, a finite barrier value guarantees strict feasibility under the regularity and geometric conditions of \cref{rem:barrier-boundary}. In particular, if the constraint function belongs to $C^{0,\eta}$ and $\eta p>d$, then boundary contact would force the power-barrier integral to be infinite.

For a discretization with finitely many inequality constraints and a finite sum of barrier terms with positive weights, a finite discrete barrier value guarantees strict feasibility of all discrete inequalities.
\end{remark}

\subsection{Inner complexity bound}\label{sec:inner_complexity}

In this section, we estimate the number of proximal-gradient iterations required to obtain an $\varepsilon$-stationary point of a fixed barrier subproblem.
\begin{assumption}\label{ass:inner-hypotheses}
    For a fixed $\nu>0$, we assume that subproblem \eqref{eq:Reduced_barrier_problem_2} and the iterates $\{u_j\}$ generated by \cref{alg:NMPG} satisfy
    \begin{align*}
        \cost_\nu^*
        \coloneqq{}&
        \inf_{\Usad^\circ}\cost_\nu>-\infty,
        \\
        \|\nabla\costs_\nu(u_{j+1}) -\nabla\costs_\nu(u_j)\|_{\Us}
        \le{}&
        \Lsmooth(\nu)\|u_{j+1}-u_j\|_{\Us}
        \quad\text{for every }j,
    \end{align*}
    where $\Lsmooth(\nu)$ is independent of $j$.
\end{assumption}
Note that this uniform Lipschitz constant $\Lsmooth(\nu)$ along the iterates $\{u_j\}$, not the objective $\costs$, implies that the stepsizes are bounded uniformly from above, namely there exists $\overline\alpha_\nu\in\R$ such that $\alpha_j \leq \overline\alpha_\nu$ for all $j$.
We use the notation $\Lsmooth(\nu)=\Lsmooth(\nu;a,b)$ when this bound is supplied by \eqref{eq:Lipschitz-smooth-nu}, with any dependence of $a,b$ on $\nu$ understood.
The dependence of these bounds on $\nu$, including the line-search bound from \cite[Lem.~4]{azmi2025nonmonotone}, is discussed in \cref{sec:overall_complexity}.

Following \cite{azmi2025nonmonotone}, we define the \emphdef{gradient mapping} by
\[
\gmap_\alpha(u)
=\alpha\left[u-\prox_{\alpha^{-1}\costp}
\bigl(u-\alpha^{-1}\nabla\costs_\nu(u)\bigr)\right].
\]
Thus, the iterates satisfy $\gmap_{\alpha_j}(u_j)=\alpha_j(u_j-u_{j+1})$.
We denote the left-hand side of \eqref{eq:Termination_subproblem_practical} by $r_{j+1}^{\rm in}$ and write
\begin{equation}\label{eq:K-inner}
    c_\nu \coloneqq\delta^{-1}\left(\overline\alpha_\nu+2\Lsmooth(\nu) +\frac{\Lsmooth(\nu)^2}{\ssizelb}\right)
    \quad \text{and}\quad
    C_\nu(u_0) \coloneqq \sqrt{c_\nu\bigl(\cost_\nu(u_0)-\cost_\nu^*\bigr)}.
\end{equation}

\begin{proposition}[Inner complexity]\label{prop:inner-complexity}
    Under \cref{ass:inner-hypotheses}, \cref{alg:NMPG} reaches $r_j^{\rm in} \le\varepsilon$ after at most
    \begin{equation}\label{eq:inner_complexity}
        \Niterin(\nu,\varepsilon) \leq (\mmax+1)\left(\left\lfloor\frac{C_\nu(u_0)^2}{\varepsilon^2}\right\rfloor+1\right)
    \end{equation}
    iterations.
    If, in addition, $\cost_\nu$ is $\kappa$-strongly convex, then the subsequence with monotonic decrease (specified in the proof) satisfies
    \begin{equation}\label{eq:inner-linear-residual}
         r_{j_k}^{\rm in} \le C_\nu(u_0)\sigma_\nu^{(k-1)/2},
        \qquad
        \sigma_\nu=(1+2\kappa/c_\nu)^{-1},\quad k\ge1.
    \end{equation}
    In particular, it suffices to take
    \begin{equation}\label{eq:inner_complexity_strong_convexity}
        \Niterin(\nu,\varepsilon)
        \le(\mmax+1)\left( \left\lceil
        \frac{2\log_+(C_\nu(u_0)/\varepsilon)}{|\log\sigma_\nu|}
        \right\rceil + 1 \right),
    \end{equation}
    where $\log_+ t=\max\{0,\log t\}$ and $\log_+0=0$.
\end{proposition}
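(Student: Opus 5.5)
The plan is the standard nonmonotone descent argument of \cite{azmi2025nonmonotone}: bound the residual $r_{j+1}^{\rm in}$ by the step length, use the decrease condition \eqref{eq:ls_ineq} to make the squared step lengths summable along a monotone subsequence of ``max'' values, and count.

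\emph{Residual versus step.} By the triangle inequality and \cref{ass:inner-hypotheses},
\[
r_{j+1}^{\rm in}\le (\Lsmooth(\nu)+\ssizek)\|u_{j+1}-u_j\|_{\Us}\le(\Lsmooth(\nu)+\overline\alpha_\nu)\|u_{j+1}-u_j\|_{\Us}.
\]
Squaring and using $\ssizek\ge\ssizelb$ gives
\[
(r_{j+1}^{\rm in})^2\le\Big(\overline\alpha_\nu+2\Lsmooth(\nu)+\tfrac{\Lsmooth(\nu)^2}{\ssizelb}\Big)\ssizek\|u_{j+1}-u_j\|^2 = \delta c_\nu\,\ssizek\|u_{j+1}-u_j\|^2 .
\]
This rests on $(L+\alpha)^2\le\alpha(\overline\alpha+2L+L^2/\ssizelb)$ for $\ssizelb\le\alpha\le\overline\alpha$, which is exactly how $c_\nu$ in \eqref{eq:K-inner} is built. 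Combined with \eqref{eq:ls_ineq}, we get
\[
\cost_\nu(u_{j+1})+c_\nu^{-1}(r_{j+1}^{\rm in})^2\le\max_{0\le\ell\le m(j)}\cost_\nu(u_{j-\ell}).
\]

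\emph{Monotone subsequence.} Let $\Phi_k$ be the maximum of $\cost_\nu$ over the block of indices $\{k(\mmax+1),\dots,(k+1)(\mmax+1)-1\}$, and let $j_k$ be the index in that block attaining it. The memory window at every step reaches back at most $\mmax$ indices, so an induction within blocks shows
\[
\Phi_{k}+c_\nu^{-1}(r_{j_k}^{\rm in})^2\le\Phi_{k-1}, \qquad \Phi_0\le\cost_\nu(u_0).
\]
This index bookkeeping is the delicate step; I would reproduce it from the proof of the corresponding lemma in \cite{azmi2025nonmonotone}.

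\emph{Sublinear count.} Telescoping and $\Phi_k\ge\cost_\nu^*$ give
\[
\sum_{k=1}^K (r_{j_k}^{\rm in})^2\le c_\nu(\cost_\nu(u_0)-\cost_\nu^*)=C_\nu(u_0)^2 .
\]
Hence $\min_{k\le K} r_{j_k}^{\rm in}\le\varepsilon$ once $K> C_\nu(u_0)^2/\varepsilon^2$, that is, for $K=\lfloor C_\nu(u_0)^2/\varepsilon^2\rfloor+1$. Since $j_K\le (K+1)(\mmax+1)$, this yields \eqref{eq:inner_complexity}, possibly after adjusting the block offset.

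\emph{Strongly convex case.} I would use a Polyak--\L{}ojasiewicz-type error bound for the composite objective. Since $r_{j+1}^{\rm in}$ bounds $\dist(0,\partial\cost_\nu(u_{j+1}))$, $\kappa$-strong convexity gives
\[
\cost_\nu(u)-\cost_\nu^*\le \frac{1}{2\kappa}\dist(0,\partial\cost_\nu(u))^2 .
\]
Evaluating at $u=u_{j_k}$ and inserting into the block recursion gives
\[
(\Phi_k-\cost_\nu^*)(1+2\kappa/c_\nu)\le\Phi_{k-1}-\cost_\nu^* ,
\]
so $\Phi_k-\cost_\nu^*\le\sigma_\nu^k(\cost_\nu(u_0)-\cost_\nu^*)$. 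Then
\[
(r_{j_k}^{\rm in})^2\le c_\nu(\Phi_{k-1}-\cost_\nu^*)\le C_\nu(u_0)^2\sigma_\nu^{k-1},
\]
which is \eqref{eq:inner-linear-residual}. Solving $C_\nu(u_0)\sigma_\nu^{(k-1)/2}\le\varepsilon$ for $k$ and multiplying by the block length $\mmax+1$ gives \eqref{eq:inner_complexity_strong_convexity}.

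The main obstacle I expect is the block argument for nonmonotone decrease: showing that each block maximum decreases by the residual of the maximizing iterate, which is the index carrying the residual in both estimates.
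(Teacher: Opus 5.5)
Your proposal is correct and is essentially the paper's proof. It uses the same residual bound (your $(\Lsmooth(\nu)+\alpha_j)^2\le\delta c_\nu\alpha_j$ is the paper's $\frac{\alpha_j}{\delta}(1+\Lsmooth(\nu)/\alpha_j)^2\le c_\nu$ rewritten via $\gmap_{\alpha_j}(u_j)=\alpha_j(u_j-u_{j+1})$), the same max-subsequence recursion from \cite{azmi2025nonmonotone} with telescoping, and the same strong-convexity error bound giving $E_k\le\sigma_\nu E_{k-1}$. The block offset you flag is exactly the paper's choice $j_k=\ell(k(\mmax+1))$, i.e.\ blocks $\{(k-1)(\mmax+1)+1,\dots,k(\mmax+1)\}$ with block $0=\{0\}$; this gives $j_k\le k(\mmax+1)$ and hence the stated constants in both \eqref{eq:inner_complexity} and \eqref{eq:inner_complexity_strong_convexity}, whereas your blocks starting at $k(\mmax+1)$ only give $j_K\le(K+1)(\mmax+1)-1$.
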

\begin{proof}
    Following \cite[Lem.~5, (L1)--(L3)]{azmi2025nonmonotone}, we let $\ell(j) \coloneqq \max \{\arg\max_{j-m(j)\le i\le j}\cost_\nu(u_i)\}$ and set $j_k\coloneqq\ell\bigl(k(\mmax+1)\bigr)$ for each $k\in\N$.
    The indices are strictly increasing, $j_0=0$, $j_k\le k(\mmax+1)$, and
    \begin{equation}\label{eq:selected-decrease}
    \cost_\nu(u_{j_k})
    \leq
    \cost_\nu(u_{j_{k-1}})
    -\frac{\delta}{\alpha_{j_k-1}}
     \|\gmap_{\alpha_{j_k-1}}(u_{j_k-1})\|_U^2.
    \end{equation}
    These conclusions use only the memory rule \eqref{eq:ls_memory} and accepted decrease \eqref{eq:ls_ineq}; thus their proof applies to the strictly feasible iterates here, without a global smooth extension of the barrier objective.
    By \cref{ass:inner-hypotheses} and the proximal optimality inclusion, we have
    \[
     r_{j+1}^{\rm in}\le\left(1+\frac{\Lsmooth(\nu)}{\alpha_j}\right)
               \|\gmap_{\alpha_j}(u_j)\|_U
     \le\left(1+\frac{\Lsmooth(\nu)}{\ssizelb}\right)
               \|\gmap_{\alpha_j}(u_j)\|_U.
    \]
    To combine the first inequality with \eqref{eq:selected-decrease}, we observe that
    \[
    \frac{\alpha_j}{\delta}
    \left(1+\frac{\Lsmooth(\nu)}{\alpha_j}\right)^2
    =\frac{1}{\delta}\left(\alpha_j+2\Lsmooth(\nu)
     +\frac{\Lsmooth(\nu)^2}{\alpha_j}\right)
    \le c_\nu.
    \]
    Thus,  we obtain $(r^{\rm in}_{j_k})^2\le c_\nu\bigl(\cost_\nu(u_{j_{k-1}})-\cost_\nu(u_{j_k})\bigr)$ and by summing as in \cite[Eq.~(21)]{azmi2025nonmonotone}, we can write
    \begin{equation*}
    \sum_{k=1}^n (r^{\rm in}_{j_k})^2\le C_\nu(u_0)^2,
    \qquad \text{ and  } \qquad
    \min_{1\le k\le n}(r^{\rm in}_{j_k})^2\le\frac{C_\nu(u_0)^2}{n}.
    \end{equation*}
    Hence the selected residuals converge to zero if the iteration is continued indefinitely.
    Taking $n=\lfloor C_\nu(u_0)^2/\varepsilon^2\rfloor+1$ and using the fact that $j_n\le n(\mmax+1)$, we obtain \eqref{eq:inner_complexity}.
    
    We now assume that  $\cost_\nu$ is $\kappa$-strongly convex.
    By the proximal optimality inclusion underlying \eqref{eq:prox_grad_inclusion}, we have
    \begin{equation*}
    \xi_k\coloneqq\nabla\costs_\nu(u_{j_k})-\nabla\costs_\nu(u_{j_k-1})
     -\alpha_{j_k-1}(u_{j_k}-u_{j_k-1}) \in\partial\cost_\nu(u_{j_k}),
    \qquad \text{with }  \|\xi_k\|_U=r^{\rm in}_{j_k}.
    \end{equation*}
    The strong-convexity subgradient inequality therefore gives, for every $v\in\Usad^\circ$,
    \[
    \cost_\nu(v)\ge\cost_\nu(u_{j_k})
     +\langle\xi_k,v-u_{j_k}\rangle_U
     +\frac{\kappa}{2}\|v-u_{j_k}\|_U^2.
    \]
    Using the Cauchy--Schwarz inequality and writing $t=\|u_{j_k}-v\|_U$, we obtain
    \[
    \begin{aligned}
    \cost_\nu(u_{j_k})-\cost_\nu(v)
    &\le r^{\rm in}_{j_k}t-\frac{\kappa}{2}t^2=\frac{(r^{\rm in}_{j_k})^2}{2\kappa}
     -\frac{\kappa}{2}\left(t-\frac{r^{\rm in}_{j_k}}{\kappa}\right)^2
    \le\frac{(r^{\rm in}_{j_k})^2}{2\kappa}.
    \end{aligned}
    \]
    Since this estimate holds for every $v\in\Usad^\circ$, taking the infimum of $\cost_\nu(v)$ yields
    \[
    \cost_\nu(u_{j_k})-\cost_\nu^*\le\frac{(r^{\rm in}_{j_k})^2}{2\kappa}.
    \]
    In particular, this argument does not require attainment of $\cost_\nu^*$.
    We write $E_k\coloneqq\cost_\nu(u_{j_k})-\cost_\nu^*\ge0$ for $k\ge0$.
    Combining the last estimate with the preceding residual-decrease bound, we obtain
    \[
    2\kappa E_k\le (r^{\rm in}_{j_k})^2\le c_\nu(E_{k-1}-E_k).
    \]
    Rearranging and applying the resulting estimate successively, we can write
    \[
    (c_\nu+2\kappa)E_k\le c_\nu E_{k-1},
    \qquad
    E_k\le\frac{c_\nu}{c_\nu+2\kappa}E_{k-1}
    =\sigma_\nu E_{k-1}\le\sigma_\nu^k E_0.
    \]
    Using again the residual-decrease bound and $E_k\ge0$, we consequently have
    \[
    (r^{\rm in}_{j_k})^2\le c_\nu(E_{k-1}-E_k)
    \le c_\nu E_{k-1}
    \le c_\nu E_0\sigma_\nu^{k-1}
    =C_\nu(u_0)^2\sigma_\nu^{k-1}.
    \]
    Taking square roots proves \eqref{eq:inner-linear-residual}.
    For $\varepsilon>0$, choosing
    \[
    k=1+\left\lceil
    \frac{2\log_+(C_\nu(u_0)/\varepsilon)}{|\log\sigma_\nu|}
    \right\rceil
    \]
    ensures $r^{\rm in}_{j_k}\le\varepsilon$.
    Together with $j_k\le k(\mmax+1)$, this proves \eqref{eq:inner_complexity_strong_convexity}.
\end{proof}

The estimates above concern a fixed barrier subproblem.
In \cref{sec:overall_complexity}, we use the warm starts in \cref{alg:IP} to bound the initial objective gaps and then sum the inner iteration bounds over the barrier parameters.

\section{Interior-point scheme: convergence and total complexity} \label{sec:outer_scheme}

In this section, we present the outer interior-point scheme used to solve \eqref{eq:Primal} and investigate its convergence and complexity.

\subsection{Outer interior-point loop}\label{sec:outer_convergence}

The numerical procedure is inspired by \cite[Alg. 1]{demarchi2024interior} and outlined in \cref{alg:IP}.
The main computational effort lies in the minimization (up to approximate stationarity) of the barrier subproblem \eqref{eq:Reduced_barrier_problem}. In iteration $k\in\N$, we execute this step by invoking \cref{alg:NMPG} with (strictly feasible) initial point $u_{k-1}$, barrier parameter $\nu_k$, and tolerance $\varepsilon_k$.
The outer loop then proceeds by successively reducing the barrier parameter $\nu_k$ and the inner tolerance $\varepsilon_k$ until the prescribed termination conditions are met. 

\begin{algorithm2e}
    \caption{Interior point method for \eqref{eq:Reduced}}%
    \label{alg:IP}%
    \DontPrintSemicolon%
    \KwIn{strictly feasible $u_0\in \Usad^\circ$, tolerance $\varepsilon>0$}
    \KwData{select $\nu_1, \varepsilon_1 > 0$, $\theta_\varepsilon, \theta_\nu \in (0,1)$}
    \KwOut{$\varepsilon$-KKT point $u^\star \in \Us$}
    \For{$k = 1,2,3\ldots$}{
        find a strictly feasible $\varepsilon_k$-stationary point $u_k\in \Usad^\circ$ for \eqref{eq:Reduced_barrier_problem} with $\nu\gets\nu_k$, starting from $u_{k-1}$
        \label{step:IP:primal}\;
        set $\mu_k \gets \nu_k \nabla \barrierfun ( H(u_k))$\label{step:IP:dual}\;
        \If{termination condition $\eqref{eq:terminationEpsKKT}$ holds}{\Return{$u^\star\gets u_k$}}
        set $\varepsilon_{k+1} \gets \max\{\varepsilon,\theta_\varepsilon \varepsilon_k\}$ and $\nu_{k+1} \gets \theta_\nu \nu_k$\label{step:IP:params}\;
	}
\end{algorithm2e}

The key step in our analysis is to relate approximate solutions of the barrier subproblem \eqref{eq:Reduced_barrier_problem} to approximate KKT points of the original problem \eqref{eq:Reduced}.
The following lemma collects the four fundamental properties maintained at every outer iteration of \cref{alg:IP}: strict feasibility of the primal iterates, dual feasibility of the approximate multipliers, a quantitative stationarity residual for the original problem, and a descent property when the barrier kernel is nonnegative, cf. \cite[Lemma 17]{demarchi2024interior}.

\begin{remark}
    Although \cref{alg:IP} uses warm starts at \cref{step:IP:primal}, the outer stationarity and KKT convergence results remain valid with other strictly feasible initial points for the inner solves, under the stated assumptions.
    Warm starts are used for the descent estimate in \cref{lem:outer_iterates_behaviour}\ref{lem:outer_iterates_behaviour:iv} and the uniform initial objective-gap bounds in \cref{sec:overall_complexity}.
    For other initializations, these bounds must be verified separately.
\end{remark}

\begin{lemma}[Basic properties of the outer iterates]\label{lem:outer_iterates_behaviour}
    Let \(\{(u_k,\mu_k)\}_{k\ge 1}\) be a sequence generated by \cref{alg:IP}.
    Then, for every \(k\ge 1\), the following hold:
    \begin{enumerate}[label=(\roman*)]
        \item \label{lem:outer_iterates_behaviour:i}%
        \(u_k\in \Us\) is strictly feasible for \eqref{eq:Reduced}, i.e., $H(u_k)<_Z 0$;
        \item \label{lem:outer_iterates_behaviour:ii}%
        \(\mu_k\in Z_+^\ast\);
        \item \label{lem:outer_iterates_behaviour:iii}%
        $\dist\left( -\nabla \costr(u_k) - H'(u_k)^\ast \mu_k, \partial \costp(u_k) \right)\le \varepsilon_k$;
        \item \label{lem:outer_iterates_behaviour:iv}%
        if the scalar kernel \(\phi\) defining \(\barrierfun_\phi\) is nonnegative on \((-\infty,0)\), then
        \[
        (\costr+\costp)(u_k)
        \;\le\;
        (\costs_{\nu_k}+\costp)(u_k)
        \;\le\;
        (\costs_{\nu_k}+\costp)(u_{k-1})
        \;\le\;
        (\costs_{\nu_{k-1}}+\costp)(u_{k-1}).
        \]
    \end{enumerate}
\end{lemma}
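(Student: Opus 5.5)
Each of the four items follows from one of three sources: how \cref{alg:NMPG} accepts iterates, the definition of \(\mu_k\) at \cref{step:IP:dual}, and the chain rule applied to the barrier term.

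\emph{Item \ref{lem:outer_iterates_behaviour:i}.} At \cref{step:IP:primal}, \(u_k\) is returned by \cref{alg:NMPG} started from \(u_{k-1}\). I argue by induction on \(k\), with \(u_0\in\Usad^\circ\) given. Every accepted trial point must pass the explicit check \(u_j^+\in\Usad^\circ\), so every inner iterate, and in particular the output \(u_k\), lies in \(\Usad^\circ\). Hence \(H(u_k)<_Z0\).

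\emph{Item \ref{lem:outer_iterates_behaviour:ii}.} Since \(H(u_k)\in\interior Z_-\), property \ref{B2} gives \(\nabla\barrierfun(H(u_k))\in Z_+^\ast\). The set \(Z_+^\ast\) is a cone and \(\nu_k>0\), so \(\mu_k=\nu_k\nabla\barrierfun(H(u_k))\in Z_+^\ast\).

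\emph{Item \ref{lem:outer_iterates_behaviour:iii}.} This is the step that needs care, because \(\costb\) is nonsmooth and extended-valued. Near \(u_k\), the smooth part \(\costs_{\nu_k}\) is \(C^1\): by continuity of \(H\) and openness of \(\interior Z_-\) there is a neighbourhood of \(u_k\) mapped into \(\interior Z_-\), and \ref{B1} together with \ref{G2} then gives differentiability. The chain rule yields \(\nabla\costs_{\nu_k}(u_k)=\nabla\costr(u_k)+H'(u_k)^\ast\mu_k\).

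The sum rule for a \(C^1\) function plus a convex lsc function then gives \(\partial\cost_{\nu_k}(u_k)=\nabla\costs_{\nu_k}(u_k)+\partial\costp(u_k)\). Here \(\partial\) is the limiting or Fréchet subdifferential, which coincides with the convex one for \(\costp\). The value \(+\infty\) outside \(\Usad^\circ\) plays no role, since this set is open around \(u_k\).

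Alternatively, and more concretely, I use the practical termination test \eqref{eq:Termination_subproblem_practical}. By \eqref{eq:prox_grad_inclusion}, the vector \(\xi\coloneqq\nabla\costs_{\nu_k}(u_{j+1})-\nabla\costs_{\nu_k}(u_j)-\ssizek(u_{j+1}-u_j)\) lies in \(\nabla\costs_{\nu_k}(u_{j+1})+\partial\costp(u_{j+1})\), and \(\|\xi\|\le\varepsilon_k\). Either way, some \(g\in\partial\costp(u_k)\) satisfies \(\|\nabla\costr(u_k)+H'(u_k)^\ast\mu_k+g\|\le\varepsilon_k\). This is exactly \(\dist(-\nabla\costr(u_k)-H'(u_k)^\ast\mu_k,\partial\costp(u_k))\le\varepsilon_k\).

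\emph{Item \ref{lem:outer_iterates_behaviour:iv}.} Assume \(\phi\ge0\). Then \(\barrierfun(H(u_k))\ge0\), so \(\costr+\costp\le\costs_{\nu_k}+\costp\) at \(u_k\); this is the first inequality.

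For the second inequality, the inner solver is warm-started at \(u_{k-1}\), and the nonmonotone rule \eqref{eq:ls_ineq} with memory \eqref{eq:ls_memory} keeps every iterate below the maximum over the memory window. By induction on \(j\), every inner iterate satisfies \(\cost_{\nu_k}(u_j)\le\cost_{\nu_k}(u_0)\): the window maximum never exceeds the starting value. Hence \(\cost_{\nu_k}(u_k)\le\cost_{\nu_k}(u_{k-1})\).

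For the third inequality, \(\nu_k=\theta_\nu\nu_{k-1}<\nu_{k-1}\) and \(\barrierfun(H(u_{k-1}))\ge0\), so \(\nu_k\barrierfun(H(u_{k-1}))\le\nu_{k-1}\barrierfun(H(u_{k-1}))\).

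The main obstacle I expect is item \ref{lem:outer_iterates_behaviour:iii}: justifying the subdifferential sum rule and chain rule in the infinite-dimensional setting with the extended-value convention. I would sidestep it through the explicit inclusion \eqref{eq:prox_grad_inclusion}, which only requires \(\costp\)'s convex subdifferential and the gradient formula for \(\costs_\nu\) given before \eqref{eq:Lipschitz-smooth-nu}.
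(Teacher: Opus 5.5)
Your proposal is correct and follows essentially the same route as the paper: (i) comes from the feasibility check in \cref{alg:NMPG}, (ii) from \ref{B2} and $\nu_k>0$, (iii) from substituting the chain-rule identity $\nabla\costs_{\nu_k}(u_k)=\nabla\costr(u_k)+H'(u_k)^\ast\mu_k$ into the $\varepsilon_k$-stationarity condition, and (iv) from $\barrierfun_\phi\ge0$, the warm-started nonmonotone descent, and $\nu_k<\nu_{k-1}$. Your use of the proximal-gradient inclusion \eqref{eq:prox_grad_inclusion} together with the practical test \eqref{eq:Termination_subproblem_practical} is a more concrete justification of the subdifferential sum rule that the paper uses implicitly in (iii), not a different argument.
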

\begin{proof}
    \ref{lem:outer_iterates_behaviour:i}
    By \cref{alg:NMPG}, each inner solve returns $u_k\in \Usad^\circ$.
    Hence $H(u_k)\in\interior(Z_-)$, i.e., $H(u_k)<_Z 0$.
    
    \ref{lem:outer_iterates_behaviour:ii}
    By definition of the approximate multiplier, we have
    \begin{equation}\label{eq:lag_Mul}
        \mu_k \coloneqq \nu_k \nabla \barrierfun_\phi\bigl(H(u_k)\bigr),
    \end{equation}
    with \(\nu_k>0\).
    Moreover, due to \ref{B2}, we have $\nabla \barrierfun_\phi(z)\in Z_+^\ast$ for all  $z\in \interior(Z_-)$.
    Therefore, it holds \(\mu_k\in Z_+^\ast\).
    
    \ref{lem:outer_iterates_behaviour:iii}
    By the definition of \(\costs_{\nu_k}\), we can write 
    \[
    \nabla \costs_{\nu_k}(u_k)
    =
    \nabla \costr(u_k) + \nu_k H'(u_k)^\ast \nabla \barrierfun_\phi\bigl(H(u_k)\bigr)
    =
    \nabla \costr(u_k) + H'(u_k)^\ast \mu_k .
    \]
    Then, substituting the above identity in the \(\varepsilon_k\)-stationarity condition for the barrier subproblem yields
    \[
    \dist\left(
    0,\nabla \costr(u_k) + H'(u_k)^\ast \mu_k + \partial \costp(u_k)
    \right)\le \varepsilon_k,
    \]
    which is equivalent to the stated estimate. In particular, there exists \(\zeta_k\in\partial\costp(u_k)\) such that
    \begin{equation}\label{eq:approx_stationarity_xi}
    \|\nabla \costs_{\nu_k}(u_k)+\zeta_k\|\le \varepsilon_k.
    \end{equation}
    
    \ref{lem:outer_iterates_behaviour:iv}
    Assume \(\phi\ge 0\) on \((-\infty,0)\).
    Then \(\barrierfun_\phi\ge 0\) on \(\interior(Z_-)\), and hence
    $(\costr+\costp)(u_k)\le (\costs_{\nu_k}+\costp)(u_k)$.
    The second inequality follows from the descent property of \cref{alg:NMPG} at the \(k\)-th outer iteration, since the subproblem solve is initialized at \(u_{k-1}\).
    Finally, because \(\barrierfun_\phi\ge 0\) and \(0\le \nu_k\le \nu_{k-1}\), the third inequality holds too, proving the claim.
\end{proof}

All iterates $u_k$ generated by \cref{alg:IP} are (strictly) feasible by construction, while stationarity is controlled by the inner tolerance $\varepsilon_k$, as shown by \cref{lem:outer_iterates_behaviour}\ref{lem:outer_iterates_behaviour:iii}.
Therefore, as a termination criterion for \cref{alg:IP}, we monitor inner stationarity and use an approximate \emph{complementarity} condition for a given tolerance $\varepsilon \ge 0$.
The practical termination conditions for $\varepsilon$-KKT optimality read
\begin{subequations}
    \label{eq:terminationEpsKKT}
    \begin{align}
        \dist\bigl(0, \nabla \costs_{\nu_k}(u_k)  + \partial\costp(u_k)\bigr) &\leq \varepsilon, \\ \label{eq:terminComp}
        \bigl| \langle \mu_k, H(u_k) \rangle_{Z^{\ast},Z} \bigr| &\leq \varepsilon.     
    \end{align}
\end{subequations}
These conditions are sufficient for approximate KKT optimality of $u_k$ for \eqref{eq:Reduced}, since the primal feasibility $H(u_k) \le_Z 0$ and dual feasibility $\mu_k \in Z_+^{\ast}$ hold by construction; cf. \cref{lem:outer_iterates_behaviour}.

To bound the complementarity residual for the power barrier, we need a direction along which the linearized constraint variation points strictly into the interior of the cone near the constraint boundary.
The following lemma shows that such a direction exists and remains uniformly bounded along the barrier path, as a consequence of the linearized Slater condition \ref{CQ1}.

\begin{lemma}[Stability of an inward-pointing condition along a convergent barrier path]\label{lem:IP}
    Let $\{\nu_k\}_{k\in\N}\subset(0,\infty)$ satisfy \(\nu_k\downarrow 0\), and let
    \(u_k\in \Us\) be an \(\varepsilon_k\)-stationary point of \eqref{eq:Reduced_barrier_problem}, where \(\varepsilon_k\downarrow 0\).
    Assume that:
    \begin{enumerate}[label=(S\arabic*)]
        \item\label{Stability:A1}%
        \(u_k\rightharpoonup u^\star\) in \(\Us\) for some \(u^\star\in\mathcal D\), and the linearized Slater condition \ref{CQ1} holds at \(u^\star\).
        \item\label{Stability:A2}%
        The mapping $H:\mathcal D \to Z$ is sequentially weak-to-strong continuous, and the derivatives converge strongly on each fixed direction.
        More precisely, if $\{u_j\} \subset \mathcal D$ and $\bar u\in\mathcal D$, then
        \(u_j\rightharpoonup \bar u\) in \(\Us\) implies \(H(u_j)\to H(\bar u)\) in \(Z\) and
        \(H'(u_j)v \to H'( \bar u)v\) in \(Z\) for every \(v\in\Us\).
    \end{enumerate}
    Then there exist a direction \(d\in \Us\), constants \(C_d>0\), \(\kappa>0\), \(\delta_0>0\), \(M_z>0\), and \(k_0\in\N\) such that
    $\|d\|_{\Us}\le C_d$,
    and, for every \(k\ge k_0\) and every \(\delta\in(0,\delta_0]\), the linearized constraint variation
    $z_k\coloneqq H'(u_k)\,d \in Z$
    satisfies
    \begin{equation}\label{eq:IP}
    \|z_k\|_{Z} \le M_z,
    \qquad
    z_k \le_Z -\kappa\,\mathbf{1}
    \ \ \text{on the set}\ \
    \{\xi\in\Xi:\ 0<-[H(u_k)](\xi)\le \delta\}.
    \end{equation}
\end{lemma}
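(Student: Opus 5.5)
The plan is to take \(d\coloneqq v-u^\star\), where \(v\in\dom\costp\) is the point supplied by \ref{CQ1} at \(u^\star\). This gives \(\|d\|_{\Us}\le C_d\coloneqq\|v-u^\star\|_{\Us}+1\). Set \(w\coloneqq H(u^\star)+H'(u^\star)d\), which lies in \(\interior Z_-\). In both model cases \(\interior Z_-\) consists of the elements bounded above by a negative multiple of \(\mathbf 1\). Hence there is \(\kappa_0>0\) with \(w\le_Z-2\kappa_0\mathbf 1\). Since the sup-norm (or max-norm) controls pointwise values, any \(z\) with \(\|z-w\|_Z\le\kappa_0\) satisfies \(z\le_Z-\kappa_0\mathbf 1\).

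Next I pass to the limit along the sequence using \ref{Stability:A2}. From \(u_k\rightharpoonup u^\star\) we get \(H(u_k)\to H(u^\star)\) in \(Z\) and, applied to the fixed direction \(d\), \(H'(u_k)d\to H'(u^\star)d\) in \(Z\). Thus \(H(u_k)+z_k\to w\) strongly, so there is \(k_0\) with \(\|H(u_k)+z_k-w\|_Z\le\kappa_0\) for \(k\ge k_0\). This gives the pointwise bound
\[
z_k(\xi)\le -\kappa_0-[H(u_k)](\xi)\qquad\forall\xi\in\Xi,\ k\ge k_0 .
\]
Boundedness of \(z_k\) is immediate from the same strong convergence. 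Enlarging the bound over the finitely many \(k\le k_0\) if needed (or simply restricting to \(k\ge k_0\)), we may take \(M_z\coloneqq\|H'(u^\star)d\|_Z+1\), again after increasing \(k_0\).

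The inward-pointing property follows by choosing \(\delta_0\coloneqq\kappa_0/2\) and \(\kappa\coloneqq\kappa_0/2\). Fix \(\delta\in(0,\delta_0]\) and a point \(\xi\) with \(0<-[H(u_k)](\xi)\le\delta\). The display above then gives \(z_k(\xi)\le-\kappa_0+\delta\le-\kappa_0/2=-\kappa\). All constants are independent of \(k\) and \(\delta\).

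The only delicate point is the step converting \(w\in\interior Z_-\) into a uniform margin \(w\le_Z-2\kappa_0\mathbf 1\), and then transferring this margin to nearby elements. For \(C(\overline K)\) this uses compactness of \(\overline K\): a continuous, strictly negative function attains a negative maximum. It also uses the fact that norm perturbations of size \(\kappa_0\) are pointwise perturbations of size at most \(\kappa_0\), i.e.\ that \(\mathbf 1\) is an order unit compatible with the norm. In \(\R^q\) the same argument works with the max-norm, up to a norm-equivalence constant. Weak convergence of \(u_k\) alone would not suffice; the argument genuinely needs the weak-to-strong hypothesis \ref{Stability:A2} for both \(H\) and \(H'(\cdot)d\). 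Note also that the \(\varepsilon_k\)-stationarity of \(u_k\) is not used in this argument.
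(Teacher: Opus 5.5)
Your proof is correct and follows essentially the paper's argument. You use the same direction \(d=v-u^\star\) from \ref{CQ1}, and the same application of \ref{Stability:A2} along this fixed direction to transfer the linearized Slater margin to \(u_k\). The difference is only organizational: the paper first shows \(H'(u^\star)d\le -\tfrac32\kappa\) on the near-active set \(\{0\le -H(u^\star)\le 2\delta_0\}\) of the limit, and then transfers the set and the bound separately. You instead pass to the limit in the sum \(H(u_k)+z_k\) and read off \(z_k\le -\kappa_0-H(u_k)\) pointwise, which is slightly more direct.
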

\begin{proof}
    By \ref{Stability:A1}, we choose $v\in\dom\costp$ from \ref{CQ1} and write $\bar d=v-u^\star$.
    Then there exists $\eta>0$ such that
    \begin{equation}\label{eq:linearized_slater_polished}
        H(u^\star)+H'(u^\star)\bar d \le_Z -\eta\,\mathbf{1},
    \end{equation}
    equivalently \(H(u^\star)+H'(u^\star)\bar d\in \interior (Z_-)\).
    We fix \(\delta_0\in(0,\eta/2)\) and set \(z^\star\coloneqq H'(u^\star)\bar d\in Z\).
    Let \(\xi\in\Xi\) be such that \(0\le -H(u^\star)(\xi)\le 2\delta_0\), i.e., \(H(u^\star)(\xi)\ge -2\delta_0\).
    Evaluating \eqref{eq:linearized_slater_polished} at \(\xi\) yields
    \[
    H(u^\star)(\xi)+z^\star(\xi)\le -\eta,
    \qquad\text{hence}\qquad
    z^\star(\xi)\le -\eta - H(u^\star)(\xi)\le -\eta + 2\delta_0.
    \]
    Defining  $\kappa \coloneqq \frac{2}{3}\,(\eta-2\delta_0)>0$, we obtain that 
    \begin{equation}\label{eq:neg_on_sublevel_polished}
        z^\star(\xi)\le -\frac{3}{2}\kappa
        \qquad
        \forall\,\xi\in\Xi\ \text{with}\ 0\le -H(u^\star)(\xi)\le 2\delta_0 .
    \end{equation}
    By \ref{Stability:A2} and \(u_k\rightharpoonup u^\star\), we have \(H(u_k)\to H(u^\star)\) in \(Z\).
    Thus there exists \(k_1\in\N\) such that
    \begin{equation}\label{eq:uniform_s_polished}
        \|H(u_k)-H(u^\star)\|_{Z}\le \delta_0
        \qquad
        \forall\,k\ge k_1.
    \end{equation}
    Consequently, for any \(k\ge k_1\) and any \(\xi\in\Xi\) with \(0<-[H(u_k)](\xi)\le \delta_0\), we obtain
    \[
    -H(u^\star)(\xi)
    \le -H(u_k)(\xi) + |H(u_k)(\xi)-H(u^\star)(\xi)|
    \le \delta_0+\delta_0=2\delta_0.
    \]
    Next, we write $d=\bar d$ and $z_k=H'(u_k)d$.
    By \ref{Stability:A2} we have \(z_k\to z^\star\) in \(Z\).
    Hence there exists \(k_2\in\N\) such that
    \begin{equation}\label{eq:uniform_z_polished}
        \|z_k-z^\star\|_{Z}\le \frac{1}{2}\kappa
        \qquad \forall\,k\ge k_2.
    \end{equation}
    Let \(k_0\coloneqq \max\{k_1,k_2\}\).
    For \(k\ge k_0\) and \(\xi\in\Xi\) with \(0<-[H(u_k)](\xi)\le \delta_0\), we have shown that \(-H(u^\star)(\xi)\le 2\delta_0\), and thus by \eqref{eq:neg_on_sublevel_polished}, \(z^\star(\xi)\le -\frac{3}{2}\kappa\).
    Combining this with \eqref{eq:uniform_z_polished} gives
    \[
    z_k(\xi)\le z^\star(\xi)+|z_k(\xi)-z^\star(\xi)|
    \le -\frac{3}{2}\kappa+\frac{1}{2}\kappa
    =-\kappa,
    \]
    which proves the inward-pointing estimate in \eqref{eq:IP} (with \(\delta\le \delta_0\)).
    Finally, the norm bound follows from \eqref{eq:uniform_z_polished}:
    \[
    \|z_k\|_{Z}\le \|z^\star\|_{Z}+\|z_k-z^\star\|_{Z}
    \le \|z^\star\|_{Z}+\frac{1}{2}\kappa \eqqcolon M_z.
    \]
    Setting \(C_d \coloneqq 1+\|d\|_U\) completes the proof.
\end{proof}

We now state the main outer-loop result, which quantifies how many barrier parameter updates are needed to drive the complementarity residual below a prescribed tolerance $\varepsilon$, distinguishing the logarithmic and power barriers.

\begin{theorem}[Outer iteration complexity]\label{thm:outer-convergence}
    Let \(\{(\nu_k,\varepsilon_k,u_k,\mu_k)\}_{k\in\N}
    \subset (0,\infty)^2\times \Us\times Z^\ast\) be a sequence generated by \cref{alg:IP}.
    Define the complementarity residual
    \[
    r_k \;\coloneqq\; \bigl|\langle \mu_k,\,H(u_k)\rangle_{Z^\ast,Z}\bigr|.
    \]
    Then \(r_k\to 0\) as \(k\to\infty\) for the barrier choices listed below.
    Moreover, the number of outer iterations needed to find an $\varepsilon$-KKT point admits the following bounds:
    \begin{enumerate}[label=(\arabic*)]
    \item \textbf{Logarithmic barrier.}
        Then \(r_k=\meas(\Xi)\,\nu_k\) and hence \(r_k\to 0\).
        In particular, it is sufficient to perform
        \begin{equation}\label{eq:complexity_outer_iterations:log_barrier}
        \Niterout(\varepsilon)
        =1+\left\lceil\max\left\{
        0,\frac{\log(\varepsilon_1/\varepsilon)}{\log(1/\theta_\varepsilon)},
        \frac{\log(\meas(\Xi)\nu_1/\varepsilon)}{\log(1/\theta_\nu)}
        \right\}\right\rceil
        \end{equation}
        outer iterations to satisfy \eqref{eq:terminationEpsKKT} for all \(k\ge \Niterout(\varepsilon)\).
    \item \textbf{Power barrier with \(p>0\).}
        Assume, in addition to \ref{Stability:A1}--\ref{Stability:A2}, that
        \begin{enumerate}[label=(S\arabic*),start=3]
            \item \label{Stability:A3}
            \(\nabla \costr:\mathcal{D} \to \Us\) is sequentially weak-to-strong continuous.
            \item \label{unifrom_bounded:A4}
            the sequence of $\zeta_k\in\partial \costp(u_k)$ in \eqref{eq:approx_stationarity_xi} is uniformly bounded (i.e., $\sup_{k}\|\zeta_k\|_U \leq C_\zeta$ for some  $C_\zeta>0$).
        \end{enumerate}
        Then, there exists a constant \(C>0\) (depending only on \(p\), \(\meas(\Xi)\), $C_\zeta$, the stability constants from \cref{lem:IP}, and the boundedness constants along the generated sequence) such that
        \[
        r_k \le C \nu_k^{\frac{1}{p+1}} \qquad \text{for all }k\ge k_0.
        \]
        Consequently, it is sufficient to perform
        \begin{equation}\label{eq:complexity_outer_iterations:power_barrier}
        \Niterout(\varepsilon)=\max\left\{k_0,\
        1+\left\lceil\max\left\{
        0,\frac{\log(\varepsilon_1/\varepsilon)}{\log(1/\theta_\varepsilon)},
        \frac{(p+1)\log(C\nu_1^{1/(p+1)}/\varepsilon)}{\log(1/\theta_\nu)}
        \right\}\right\rceil\right\}.
        \end{equation}
        outer iterations to satisfy \eqref{eq:terminationEpsKKT} for all \(k\ge \Niterout(\varepsilon)\).
    \end{enumerate}
\end{theorem}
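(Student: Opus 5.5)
For the logarithmic barrier the plan is a direct computation. By \cref{tab:Bphi-derivatives} and \cref{tab:barrier-derivatives}, $\langle\nabla\barrierfun_{\phi_{\log}}(z),h\rangle=\int_\Xi -h(\xi)/z(\xi)\,\mathrm d\meas(\xi)$. Taking $z=h=H(u_k)$, which lies in $\interior Z_-$ by \cref{lem:outer_iterates_behaviour}\ref{lem:outer_iterates_behaviour:i}, the integrand equals $-1$. Hence $\langle\mu_k,H(u_k)\rangle=-\nu_k\meas(\Xi)$ and $r_k=\meas(\Xi)\nu_k$.

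The iteration count then follows from the parameter updates in \cref{alg:IP}: $\nu_k=\theta_\nu^{k-1}\nu_1$ and $\varepsilon_k=\max\{\varepsilon,\theta_\varepsilon^{k-1}\varepsilon_1\}$. The stationarity part of \eqref{eq:terminationEpsKKT} holds once $\varepsilon_k\le\varepsilon$, using \cref{lem:outer_iterates_behaviour}\ref{lem:outer_iterates_behaviour:iii}. This is guaranteed when $k-1\ge\log(\varepsilon_1/\varepsilon)/\log(1/\theta_\varepsilon)$. The complementarity part holds once $\theta_\nu^{k-1}\meas(\Xi)\nu_1\le\varepsilon$. Taking the ceiling of the maximum of the two thresholds and adding one gives \eqref{eq:complexity_outer_iterations:log_barrier}. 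Both conditions are monotone in $k$, so they remain valid for all later $k$.

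For the power barrier, write $s_k\coloneqq-H(u_k)>0$ pointwise. Then
\[
r_k=\nu_k p\int_\Xi s_k^{-(p+1)}s_k\,\mathrm d\meas=\nu_k p\int_\Xi s_k^{-p}\,\mathrm d\meas.
\]
Split $\Xi$ into $\Xi_k^{\rm far}=\{s_k>\delta\}$ and $\Xi_k^{\rm near}=\{s_k\le\delta\}$, with a threshold $\delta\le\delta_0$ to be chosen. On the far set the contribution is at most $\nu_k p\meas(\Xi)\delta^{-p}$.

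For the near set, apply the stationarity estimate \eqref{eq:approx_stationarity_xi} in the direction $d$ from \cref{lem:IP}. Hypotheses \ref{Stability:A1}--\ref{Stability:A2} supply $d$ with $z_k=H'(u_k)d$ satisfying $\|z_k\|\le M_z$ and $z_k\le-\kappa$ on $\Xi_k^{\rm near}$. Pairing with $d$ gives
\[
\nu_k p\int_\Xi s_k^{-(p+1)}(-z_k)\,\mathrm d\meas\le(\varepsilon_k+\|\nabla\costr(u_k)\|+\|\zeta_k\|)\|d\|\eqqcolon K_k .
\]
Here $\nabla\costr(u_k)$ is bounded because it converges strongly by \ref{Stability:A3}, $\|\zeta_k\|\le C_\zeta$ by \ref{unifrom_bounded:A4}, and $\varepsilon_k\le\varepsilon_1$; hence $\sup_k K_k\le K<\infty$. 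Split the integral into near and far parts. On $\Xi_k^{\rm near}$ we have $-z_k\ge\kappa$. On $\Xi_k^{\rm far}$ we have $-z_k\ge-M_z$ and $s_k^{-(p+1)}<\delta^{-(p+1)}$. Combining these,
\[
\kappa\,\nu_k p\int_{\Xi_k^{\rm near}}s_k^{-(p+1)}\,\mathrm d\meas\le K+\nu_k pM_z\meas(\Xi)\delta^{-(p+1)}.
\]
On the near set $s_k^{-p}=s_k\,s_k^{-(p+1)}\le\delta s_k^{-(p+1)}$. Therefore the near contribution to $r_k$ is at most $\delta\kappa^{-1}\bigl(K+\nu_kpM_z\meas(\Xi)\delta^{-(p+1)}\bigr)$.

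Altogether,
\[
r_k\lesssim \delta+\nu_k\delta^{-p}.
\]
Choosing $\delta=\nu_k^{1/(p+1)}$, which is admissible ($\le\delta_0$) for $k$ large after enlarging $k_0$, yields $r_k\le C\nu_k^{1/(p+1)}$.

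The iteration count again follows by solving $C\theta_\nu^{(k-1)/(p+1)}\nu_1^{1/(p+1)}\le\varepsilon$ for $k$ and combining with the $\varepsilon_k$ condition and $k\ge k_0$.

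The main obstacle is the near-boundary step. Everything hinges on \cref{lem:IP} giving a $k$-uniform inward direction, and on absorbing the far-set term, where $z_k$ may be positive, into the $\nu_k\delta^{-(p+1)}$ bound so that the balance $\delta=\nu_k^{1/(p+1)}$ still works. I would check carefully that the constants $K$ and $M_z$ are independent of $k$.
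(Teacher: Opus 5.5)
Your proposal is correct and follows essentially the same route as the paper. For the logarithmic kernel you compute $r_k=\meas(\Xi)\nu_k$ directly and count geometric decreases. For the power kernel you split $\Xi$ at level $\delta$, test approximate stationarity against the inward direction $d$ from \cref{lem:IP}, and balance the two terms with $\delta=\nu_k^{1/(p+1)}$. Your justification of $\sup_k\|\nabla\costr(u_k)\|<\infty$ is, if anything, slightly cleaner than the paper's appeal to boundedness of $\{u_k\}$: you use the strong convergence of $\nabla\costr(u_k)$ along $u_k\rightharpoonup u^\star$ given by (S3).
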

\begin{proof}
    Let a tolerance $\varepsilon>0$ be fixed and recall that the barrier parameters and inner tolerances are decreased geometrically in \cref{alg:IP}:
    $\nu_k=\nu_1 \theta_\nu^{k-1}$,
    $\varepsilon_k=\varepsilon_1 \theta_\varepsilon^{k-1}$,
    $\theta_\nu,\theta_\varepsilon\in(0,1)$.
    The first terms in \eqref{eq:complexity_outer_iterations:log_barrier} and \eqref{eq:complexity_outer_iterations:power_barrier} count the outer iterations needed to reach $\varepsilon_k \leq \varepsilon$ and, thus, to satisfy $\varepsilon$-stationarity.
    It remains to estimate the complementarity residual \(r_k\) for the two barrier kernels.
    
    \textbf{Logarithmic barrier.}
    For \(\phi_{\log}(t)=-\log(-t)\) we have \(\phi'_{\log}(t)=-1/t\) on \((-\infty,0)\). Hence the associated multiplier along the barrier subproblem satisfies
    \[
    \mu_{k}=\nu_k\,\phi_{\log}'(H(u_k))=-\nu_k\,H(u_k)^{-1}\in Z_+^\ast ,
    \]
    interpreted componentwise if \(Z=\R^q\) and pointwise if \(Z=C(\overline K)\).
    Therefore, we have 
    \[
    r_k
    =\bigl|\langle \mu_k,\,H(u_k)\rangle_{Z^\ast,Z}\bigr|
    =\nu_k \left|\int_{\Xi} \frac{-H(u_k)(\xi)}{H(u_k)(\xi)} \mathrm{d} \meas(\xi)\right|
    =\nu_k \meas(\Xi)
    =\nu_1 \meas(\Xi) \theta_\nu^{k-1}.
    \]
    In particular, \(r_k\to 0\) as \(k\to\infty\).
    To ensure $r_k\leq\varepsilon$, we require
    $\meas(\Xi)\nu_1 \theta_{\nu}^{k-1} \leq \varepsilon$, or equivalently $\theta_\nu^{k-1} \leq\frac{\varepsilon}{\meas(\Xi)\nu_1}$.
    Since $\ln\theta_{\nu}<0$, this yields the second term in the iteration bound \eqref{eq:complexity_outer_iterations:log_barrier}, thereby completing its verification.
    
    \textbf{Power barrier.}
    Let \(p>0\) and \(\phi_{\rm pow}(t)=(-t)^{-p}\). Then we have 
    \[
    r_k
    =\bigl|\langle \mu_k,\,H(u_k)\rangle_{Z^\ast,Z}\bigr|
    =\nu_k\,p\int_{\Xi}(-H(u_k)(\xi))^{-p} \mathrm{d} \meas(\xi).
    \] 
    We next apply \cref{lem:IP}, thanks to the additional assumptions.
    We choose the direction $d \in \Us$, $k_0$, and $\delta_0$ from \cref{lem:IP}.
    Then $z_k\coloneqq H'(u_k)d\in Z$ satisfies \eqref{eq:IP} for every $\delta\in(0,\delta_0]$ and $k\geq k_0$.
    We also define the sets
    \[
    A(\delta)\coloneqq\{\xi\in\Xi:\ 0< -H(u_k)(\xi)\le \delta\},\qquad
    B(\delta)\coloneqq \Xi\setminus A(\delta).
    \]
    Let \(\costs_{\nu}(u)\coloneqq \costr(u)+\nu\,\barrierfun _{\rm pow}(H(u))\), where
    \(\barrierfun _{\rm pow}(z)=\int_{\Xi}(-z(\xi))^{-p} \mathrm{d} \meas(\xi)\).
    A direct differentiation yields
    \[
    \bigl\langle \nabla \costs_{\nu_k}(u_k),\,d\bigr\rangle_{\Us}
    =
    \bigl\langle \nabla \costr(u_k),\,d\bigr\rangle_{\Us}
    +\nu_k\,p\int_{\Xi} z_k(\xi)\,(-H(u_k)(\xi))^{-(p+1)}\mathrm{d}\meas(\xi).
    \]
    Choose \(\zeta_k\in\partial \costp(u_k)\) such that
    \(\|\nabla \costs_{\nu_k}(u_k)+\zeta_k\|_{\Us}\le \varepsilon_k\).
    Then we obtain
    \begin{equation}\label{eq:stationarity_dir}
        \bigl\langle \nabla \costs_{\nu_k}(u_k),\,d\bigr\rangle_{\Us}
        =
        \langle -\zeta_k, d\rangle_{\Us}
        +\langle \nabla \costs_{\nu_k}(u_k)+\zeta_k, d\rangle_{\Us}
        \ge
        -(\|\zeta_k\|_{\Us}+\varepsilon_k) \|d\|_{\Us}
        \ge
        -(\|\zeta_k\|_{\Us}+\varepsilon_k) C_d.
    \end{equation}
    Moreover, since \(\{u_k\}_k \subset \mathcal D\) is bounded , it follows from \ref{Stability:A3} that there exists \(G>0\) with \(\|\nabla\costr(u_k)\|_{\Us}\le G\) for all \(k\).
    Hence it holds \(\langle \nabla\costr(u_k),d\rangle_{\Us}\le G C_d\).
    We now estimate the barrier integral by splitting over \(A(\delta)\) and \(B(\delta)\).
    On \(A\), \cref{lem:IP} gives \(z_k(\xi)\le -\kappa\), and \(-H(u_k)(\xi)\le\delta\), so
    \begin{equation*}
    \begin{split}
    \int_{A(\delta)} z_k(\xi)\,(-H(u_k)(\xi))^{-(p+1)}\mathrm{d}\meas(\xi)
     &\le
    -\kappa\int_{A(\delta)}(-H(u_k)(\xi))^{-(p+1)}\mathrm{d}\meas(\xi)\\&
    \le
    -\kappa\,\delta^{-1}\int_{A(\delta)}(-H(u_k)(\xi))^{-p}\mathrm{d}\meas(\xi).
    \end{split}
    \end{equation*}
    Furthermore, on \(B(\delta)\), we have \(-H(u_k) (\xi)>\delta\).
    Hence \(( -H(u_k)(\xi))^{-(p+1)} < \delta^{-(p+1)}\), and using \(\|z_k\|_Z\le M_z\) yields
    \[
    \int_{B(\delta)} z_k(\xi)\,(-H(u_k)(\xi))^{-(p+1)}\mathrm{d}\meas(\xi)
    \le
    M_z\,\delta^{-(p+1)}\,\meas(B(\delta))
    \le
    M_z\,\delta^{-(p+1)}\,\meas(\Xi).
    \]
    Combining these estimates with \eqref{eq:stationarity_dir} gives
    \[
    -(\|\zeta_k\|_{\Us}+\varepsilon_k) C_d
    \le \bigl\langle \nabla \costs_{\nu_k}(u_k),\,d\bigr\rangle_{\Us} \le
    G C_d
    +\nu_k\,p\,M_z\,\delta^{-(p+1)}\,\meas(\Xi)
    -\nu_k\,p\,\kappa\,\delta^{-1}\int_{A(\delta)}(-H(u_k))^{-p}\mathrm{d}\meas,
    \]
    and, therefore, we obtain that 
    \begin{equation}\label{eq:intA_bound}
    \nu_k\,p\int_{A(\delta)}(-H(u_k))^{-p}\mathrm{d}\meas
    \le
    \frac{C_d (G+\varepsilon_k+\|\zeta_k\|_{\Us})}{\kappa}\,\delta
    +\frac{\nu_k\,p\,M_z}{\kappa}\,\delta^{-p}\,\meas(\Xi).
    \end{equation}
    Here \(C_d\) comes from the bound on the inward-pointing direction, \(G\) from the stationarity bound, and \(M_z\) from the uniform bound on \(H'(u_k)d\); all are independent of \(k\) and \(\delta\).
    Further, on \(B(\delta)\), we have  \((-H(u_k))^{-p} < \delta^{-p}\), and as a consequence, we can write  
    \begin{equation}\label{eq:intB_bound}
    \nu_k\,p\int_{B(\delta)}(-H(u_k))^{-p}\mathrm{d}\meas
    \le \nu_k\,p\,\delta^{-p}\,\meas(B(\delta)) \leq
    \nu_k\,p\,\delta^{-p}\,\meas(\Xi).
    \end{equation}
    Adding \eqref{eq:intA_bound} and \eqref{eq:intB_bound} yields
    \begin{equation*}
    r_k
    =\nu_k\,p\int_{\Xi}(-H(u_k))^{-p}\mathrm{d}\meas
    \le
    \frac{C_d (G+\varepsilon_k+\|\zeta_k\|_{\Us})}{\kappa}\,\delta
    +\frac{\nu_k\,p\,(M_z+\kappa)}{\kappa}\,\delta^{-p}\,\meas(\Xi).
    \end{equation*}
    Finally, using \ref{unifrom_bounded:A4}, we  obtain a bound of the form
    \[
    r_k \;\le\; C_1\,(G+\varepsilon_k+C_\zeta)\,\delta + C_2\,\nu_k\,\delta^{-p},
    \]
    with constants \(C_1,C_2>0\) independent of \(k\) and \(\delta\).
    We enlarge $k_0$, if necessary, so that $\nu_k^{1/(p+1)}\le\delta_0$ for every $k\ge k_0$. Choosing $\delta=\nu_k^{1/(p+1)}$ then balances both terms and yields, for every $k\ge k_0$,
    \begin{equation*}
    r_k \;\le\; C\,\nu_k^{\frac{1}{p+1}}
    \qquad \text{for some constant } C>0,
    \end{equation*}
    hence \(r_k\to 0\) as \(k\to\infty\).
    To obtain the outer complexity bound, we use $\nu_k=\nu_1\theta_\nu^{k-1}$.
    For $k\ge k_0$, the estimate $r_k\le C\nu_k^{1/(p+1)}$ guarantees $r_k\le\varepsilon$ whenever $\nu_1\theta_\nu^{k-1}\le (\varepsilon/C)^{p+1}$.
    Since \(\ln(\theta_\nu)<0\), this yields the second term in the claimed estimate \eqref{eq:complexity_outer_iterations:power_barrier}.
\end{proof}

\begin{remark}
    If both $U$ and $Z$ are finite-dimensional, Assumptions~\ref{Stability:A2} and~\ref{Stability:A3} follow from Assumption~\ref{G2}.
    Indeed, in finite-dimensional spaces, weak and strong convergence coincide, and all norms are equivalent. 
\end{remark}

For the following result and the subsequent KKT-limit analysis, we use the collective compactness condition:
\begin{enumerate}[label=(S\arabic*),start=5]
    \item\label{barrier-to-KKT:A5} For every bounded set $B\subset Z^\ast$, the set
    \[
    \mathcal K(B)\coloneqq
    \{\,H'(u_k)^\ast\mu:k\in\N,\ \mu\in B\,\}
    \]
    is relatively compact in $U$; that is, the family
    $\{H'(u_k)^\ast:k\in\N\}$ is collectively compact on bounded subsets of $Z^\ast$.
\end{enumerate}

\begin{remark}
    The approximate KKT estimates and outer-iteration bounds in \cref{thm:outer-convergence} do not require the collective compactness condition \ref{barrier-to-KKT:A5}.
    For the power kernel, these results retain \ref{unifrom_bounded:A4}, whereas neither \ref{unifrom_bounded:A4} nor \ref{barrier-to-KKT:A5} is required for the logarithmic kernel.
    We use \ref{barrier-to-KKT:A5} in the subsequent KKT-limit analysis for both kernels; it also allows us to verify \ref{unifrom_bounded:A4} through \cref{lem:A4-compactness}, rather than assume it separately.
\end{remark}

\begin{lemma}\label{lem:A4-compactness}
    In the setting of \cref{lem:IP}, we assume \ref{Stability:A1}--\ref{Stability:A3} and \ref{barrier-to-KKT:A5}.
    For either the logarithmic barrier kernel or the power barrier kernel with $p>0$, the multipliers $\mu_k$ in \eqref{eq:lag_Mul} and the selected subgradients $\zeta_k$ in \eqref{eq:approx_stationarity_xi} are uniformly bounded.
    In particular, \ref{unifrom_bounded:A4} holds.
\end{lemma}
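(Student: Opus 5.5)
The plan is to argue by contradiction, normalizing the multipliers and passing to the limit with \ref{barrier-to-KKT:A5}. The limit will produce a nonzero nonnegative multiplier for the homogeneous (``abnormal'') KKT system at \(u^\star\), and \ref{CQ1} excludes such a multiplier. Boundedness of \(\zeta_k\) then follows directly from the approximate stationarity \eqref{eq:approx_stationarity_xi}.

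\textbf{Preliminary bounds.} Since \(u_k\rightharpoonup u^\star\), the sequence \(\{u_k\}\) is bounded. By \ref{Stability:A3}, \(\nabla\costr(u_k)\to\nabla\costr(u^\star)\) strongly, so \(\|\nabla\costr(u_k)\|_U\le G\). Write \(e_k\coloneqq\nabla\costr(u_k)+H'(u_k)^\ast\mu_k+\zeta_k\), which satisfies \(\|e_k\|_U\le\varepsilon_k\) by \eqref{eq:approx_stationarity_xi}. Since \(\mu_k\in Z_+^\ast\) by \cref{lem:outer_iterates_behaviour} and \(\mathbf 1\) is an order unit, we have \(t_k\coloneqq\|\mu_k\|_{Z^\ast}=\langle\mu_k,\mathbf 1\rangle\) (up to a fixed equivalence constant in \(\R^q\)).

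\textbf{Contradiction step.} Suppose \(t_k\to\infty\) along a subsequence, and set \(\lambda_k\coloneqq\mu_k/t_k\). Both model spaces \(Z\) are separable, so after a further subsequence \(\lambda_k\overset{\ast}{\rightharpoonup}\lambda\) with \(\lambda\in Z_+^\ast\) and \(\langle\lambda,\mathbf 1\rangle=\lim\langle\lambda_k,\mathbf 1\rangle=1\); hence \(\lambda\neq0\). By \ref{barrier-to-KKT:A5}, \(w_k\coloneqq H'(u_k)^\ast\lambda_k\) converges strongly along a subsequence to some \(w\). Testing against a fixed \(v\) and using \ref{Stability:A2} (strong convergence of \(H'(u_k)v\)) together with weak-\(\ast\) convergence gives \(\langle w,v\rangle=\langle\lambda,H'(u^\star)v\rangle\), that is, \(w=H'(u^\star)^\ast\lambda\). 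Consequently \(\zeta_k/t_k=(e_k-\nabla\costr(u_k))/t_k-w_k\to-w\) strongly. Now take \(v\) from \ref{CQ1} and apply the subgradient inequality \(\langle\zeta_k,v-u_k\rangle\le\costp(v)-\costp(u_k)\). Since \(\costp\) is proper, convex and lsc, it has a continuous affine minorant, so \(-\costp(u_k)\le c(1+\|u_k\|_U)\) is bounded above. Dividing by \(t_k\) and passing to the limit (strong times weak) yields \(\langle\lambda,H'(u^\star)(v-u^\star)\rangle\ge0\).

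\textbf{Complementarity limit (main obstacle).} We need \(\langle\lambda,H(u^\star)\rangle=\lim\langle\lambda_k,H(u_k)\rangle=-\lim r_k/t_k=0\); the limit is justified because \(H(u_k)\to H(u^\star)\) strongly. For the logarithmic kernel \(r_k=\meas(\Xi)\nu_k\), so \(r_k/t_k\to0\) immediately. For the power kernel we cannot use \cref{thm:outer-convergence}, since it presupposes \ref{unifrom_bounded:A4}. Instead, apply the pointwise Young-type bound \(s^{-p}\le\delta s^{-(p+1)}+\delta^{-p}\) for \(s>0\) and any \(\delta>0\). This gives
\[
r_k=\nu_k p\int_\Xi(-H(u_k))^{-p}\,\mathrm d\meas\le\delta\,\langle\mu_k,\mathbf 1\rangle+\nu_k p\,\delta^{-p}\meas(\Xi).
\]
Hence \(\limsup r_k/t_k\le\delta\), and since \(\delta\) is arbitrary, \(r_k/t_k\to0\).

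\textbf{Conclusion.} Combining the two limits, \(\langle\lambda,H(u^\star)+H'(u^\star)(v-u^\star)\rangle\ge0\). By \ref{CQ1}, the element in the bracket is \(\le_Z-\eta\mathbf 1\), so this pairing is \(\le-\eta\langle\lambda,\mathbf 1\rangle=-\eta<0\), a contradiction. Hence \(\sup_k\|\mu_k\|_{Z^\ast}<\infty\). Finally, \(\zeta_k=e_k-\nabla\costr(u_k)-H'(u_k)^\ast\mu_k\), and the last term lies in a relatively compact (hence bounded) set by \ref{barrier-to-KKT:A5}. Therefore \(\sup_k\|\zeta_k\|_U<\infty\), which is \ref{unifrom_bounded:A4}.
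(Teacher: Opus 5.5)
Your argument is correct, but it takes a different route from the paper's proof.

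\textbf{The paper's route.} The paper argues directly and quantitatively. It first uses \ref{barrier-to-KKT:A5} to show $\|H'(u_k)(u_k-u^\star)\|_Z\to0$. Combined with \ref{Stability:A2}, this transfers the linearized Slater inequality to the iterates, $H(u_k)+H'(u_k)(v-u_k)\le_Z-\eta\mathbf 1$ for large $k$. Testing this with $\mu_k\in Z_+^\ast$ gives $\eta\langle\mu_k,\mathbf 1\rangle\le r_k+C$. The logarithmic case then closes via $r_k=\meas(\Xi)\nu_k$. The power case closes via H\"older, $r_k\le(p\nu_k\meas(\Xi))^{1/(p+1)}\langle\mu_k,\mathbf 1\rangle^{p/(p+1)}$, which yields an explicit tail bound on $\langle\mu_k,\mathbf 1\rangle$ in terms of $\eta$, $C$, $p$, $\nu_1$ and $\meas(\Xi)$.

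\textbf{Your route.} You run the classical normalization (abnormal-multiplier) argument. You use \ref{barrier-to-KKT:A5} to obtain strong convergence of $H'(u_k)^\ast\lambda_k$, rather than of $H'(u_k)(u_k-u^\star)$. The order unit together with positivity guarantees that the weak-$\ast$ limit $\lambda$ is nonzero. \ref{CQ1} then rules out the resulting Fritz John multiplier.

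\textbf{Common core and trade-offs.} For the power kernel, both proofs rest on the same fact: $r_k$ grows sublinearly in $\|\mu_k\|_{Z^\ast}$. Your splitting $s^{-p}\le\delta s^{-(p+1)}+\delta^{-p}$ captures exactly this. Optimizing over $\delta$ recovers a bound of the paper's form $r_k\lesssim\nu_k^{1/(p+1)}\langle\mu_k,\mathbf 1\rangle^{p/(p+1)}$.
\begin{itemize}
\item The paper's route gives a computable constant. Yours is qualitative, but it is structurally transparent and needs only $r_k=o(\|\mu_k\|_{Z^\ast})$.
\item Your splitting with $\delta=\eta/2$, inserted into the paper's inequality $\eta\langle\mu_k,\mathbf 1\rangle\le r_k+C$, would also give an explicit bound directly.
\item Your affine-minorant bound on $-\costp(u_k)$ replaces the paper's weak lower semicontinuity argument.
\item Your final step for $\zeta_k$, using boundedness of the relatively compact set in \ref{barrier-to-KKT:A5}, replaces the paper's appeal to \ref{Stability:A2} and the uniform boundedness principle.
\end{itemize}
Both substitutions are valid.
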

\begin{proof}
We first prove that
\begin{equation}\label{eq:moving-direction-limit}
    \lim_{k\to\infty}
    \|H'(u_k)(u_k-u^\star)\|_Z=0.
\end{equation}
By the dual characterization of the norm and the definition of the adjoint, we can write
\[
    \|H'(u_k)(u_k-u^\star)\|_Z
    =
    \sup_{\|\lambda\|_{Z^\ast}\le1}
    \bigl|
    \langle H'(u_k)^\ast\lambda,u_k-u^\star\rangle_U
    \bigr|.
\]
We argue by contradiction.
Suppose that \eqref{eq:moving-direction-limit} does not hold.
Then, there exists $\delta>0$, a subsequence $\{u_{k_j}\}_j$, and $\lambda_j\in Z^\ast$ with $\|\lambda_j\|_{Z^\ast}\le1$ such that
\[
    \bigl|
    \langle H'(u_{k_j})^\ast\lambda_j,
            u_{k_j}-u^\star\rangle_U
    \bigr|\ge\delta
    \qquad\text{for all }j.
\]
By \ref{barrier-to-KKT:A5}, we can choose a further subsequence and $w\in U$ such that
\[
    \lim_{\ell\to\infty}
    \|H'(u_{k_{j_\ell}})^\ast\lambda_{j_\ell}-w\|_U=0.
\]
Since $u_{k_{j_\ell}}\rightharpoonup u^\star$ in $U$, the sequence $\{u_{k_{j_\ell}}-u^\star\}_\ell$ is bounded,
and we obtain
\[
\bigl|
 \langle H'(u_{k_{j_\ell}})^\ast\lambda_{j_\ell},
         u_{k_{j_\ell}}-u^\star\rangle_U
 \bigr|
\le
 \|H'(u_{k_{j_\ell}})^\ast\lambda_{j_\ell}-w\|_U \|u_{k_{j_\ell}}-u^\star\|_U +|\langle w,u_{k_{j_\ell}}-u^\star\rangle_U| \to 0
 \quad \text{ as } \ell\to\infty.
\]
This contradicts the lower bound $\delta$ and proves \eqref{eq:moving-direction-limit}. 

Now, applying \ref{Stability:A2} to the fixed direction \(v-u^\star\), for any \(v\in\dom\costp\), together with \eqref{eq:moving-direction-limit}, we obtain
\begin{equation}
\label{eq:boundedness_of_mul_conv}
\begin{split}
&\|H(u_k)+H'(u_k)(v-u_k)
      -H(u^\star)-H'(u^\star)(v-u^\star)\|_Z\\
&\le
 \|H(u_k)-H(u^\star)\|_Z
 +\|(H'(u_k)-H'(u^\star))(v-u^\star)\|_Z
 +\|H'(u_k)(u_k-u^\star)\|_Z
 \to0
 \quad \text{as } k\to\infty.
\end{split}
\end{equation}
By \ref{CQ1}, we can choose  \(v\in\dom\costp\) an  $\eta>0$ such that
\[
    H(u^\star)+H'(u^\star)(v-u^\star)
    \le_Z-2\eta\mathbf1.
\]
In both settings, $Z=\R^q$ with the Euclidean norm and
$Z=C(\overline K)$ with the supremum norm, we know that $ \|z\|_Z\le\eta$ implies that $-\eta\mathbf1\le_Z z\le_Z\eta\mathbf1$.
Thus, the norm convergence \eqref{eq:boundedness_of_mul_conv} implies, for all
sufficiently large $k$,
\begin{equation}
\label{eq:boundedness_of_mul_conv2}
    H(u_k)+H'(u_k)(v-u_k) \le_Z H(u^\star)+H'(u^\star)(v-u^\star)  +\eta\mathbf1\le_Z-\eta\mathbf1.
\end{equation}
Since $\costp$ is proper, convex, and lsc, it is weakly lsc on $\Usad$.
By \ref{Stability:A1}, $u_k \rightharpoonup u^\star$, so $\costp(u^\star) \leq \liminf_{k\to\infty} \costp(u_k)$.
Hence $\{\costp(u_k)\}$ is bounded below along the tail of the sequence; since the finitely many remaining values are finite, $\inf_k\costp(u_k)>-\infty$.
Therefore, from $\zeta_k\in\partial\costp(u_k)$, we obtain that 
\begin{equation}\label{eq:boundedness_of_mul_es1}
 \langle\zeta_k,v-u_k\rangle_U  \le\costp(v)-\costp(u_k)\le C
\end{equation}
with $C$ independent of $k$.
By the definition of $\mu_k$ in \eqref{eq:lag_Mul}, we can write
\begin{equation}
\label{eq:boundedness_of_mul_es2}
\begin{split}
-\langle\mu_k,H'(u_k)(v-u_k)\rangle_{Z^\ast,Z} &=-\langle H'(u_k)^\ast\mu_k,v-u_k\rangle_U \\ &=\langle\nabla\costr(u_k)+\zeta_k,v-u_k\rangle_U-\langle\nabla\costs_{\nu_k}(u_k)+\zeta_k,v-u_k\rangle_U.
\end{split}
\end{equation}
By \eqref{eq:approx_stationarity_xi} and the
Cauchy--Schwarz inequality, we obtain
\begin{equation}
\label{eq:boundedness_of_mul_es3}
-\langle\nabla\costs_{\nu_k}(u_k)+\zeta_k,v-u_k\rangle_U \le \|\nabla\costs_{\nu_k}(u_k)+\zeta_k\|_U \|v-u_k\|_U\le\varepsilon_k\|v-u_k\|_U.
\end{equation}
Further, since $\mu_k\in Z_+^\ast$, applying this positive linear functional to both sides of \eqref{eq:boundedness_of_mul_conv2}, we obtain
\begin{equation}
\label{eq:boundedness_of_mul_es4}
\langle\mu_k,H(u_k)\rangle_{Z^\ast,Z}
+\langle\mu_k,H'(u_k)(v-u_k)\rangle_{Z^\ast,Z}
\le-\eta\langle\mu_k,\mathbf1\rangle_{Z^\ast,Z}.
\end{equation}
We write $M_k=\langle\mu_k,\mathbf1\rangle_{Z^\ast,Z}$ and $r_k=-\langle\mu_k,H(u_k)\rangle_{Z^\ast,Z}$.
Using approximate stationarity \eqref{eq:approx_stationarity_xi} and \eqref{eq:boundedness_of_mul_es1}-\eqref{eq:boundedness_of_mul_es4}, we can then estimate
\begin{equation}
\label{eq:boundedness_of_mul_es5}
\begin{split}
\eta M_k
&\le r_k-\langle\mu_k,H'(u_k)(v-u_k)\rangle_{Z^\ast,Z}\\
&\le r_k+\langle\nabla\costr(u_k),v-u_k\rangle_U
 +\langle\zeta_k,v-u_k\rangle_U
 +\varepsilon_k\|v-u_k\|_U\le r_k+C.
\end{split}
\end{equation}
Here, the last inequality follows \ref{Stability:A3}, and boundedness
of $\{u_k\}_k$ and $\{\varepsilon_k\}_k$. For the logarithmic kernel, we have $r_k=\nu_k\meas(\Xi)$, so the preceding estimate immediately gives boundedness of $M_k$. For the power kernel, we set first $\rho_k\coloneqq p\nu_k(-H(u_k))^{-(p+1)}$. Then, using  \eqref{eq:lag_Mul}, we can write 
\[
\begin{aligned}
r_k
&=p\nu_k\int_\Xi(-H(u_k))^{-p}\,\mathrm d\meas =(p\nu_k)^{1/(p+1)}
  \int_\Xi\rho_k^{p/(p+1)}\,\mathrm d\meas\\
&\le(p\nu_k)^{1/(p+1)}
  \left(\int_\Xi\rho_k\,\mathrm d\meas\right)^{p/(p+1)}
  \meas(\Xi)^{1/(p+1)}=(p\nu_k)^{1/(p+1)}
  M_k^{p/(p+1)}\meas(\Xi)^{1/(p+1)},
\end{aligned}
\]
where we used H\"older's inequality with conjugate
exponents $(p+1)/p$ and $p+1$. Combining this estimate with \eqref{eq:boundedness_of_mul_es5}, we obtain
\[
    \eta M_k
    \le C+(p\nu_k\meas(\Xi))^{1/(p+1)}
           M_k^{p/(p+1)}.
\]
If $M_k\le 2C/\eta$, we already have the desired bound.
Otherwise, we have $M_k> 2C/\eta$  subtracting $C$ and dividing by
$M_k^{p/(p+1)}$ gives
\[
    \frac{\eta}{2}M_k^{1/(p+1)}
    \le(p\nu_k\meas(\Xi))^{1/(p+1)}.
\]
Consequently, since $\nu_k\le\nu_1$, we obtain
\[
    M_k\le
    \max\left\{
        \frac{2C}{\eta},
        \left(\frac{2}{\eta}\right)^{p+1}
        p\,\nu_1\,\meas(\Xi)
    \right\}
\]
for all sufficiently large $k$. Since the finitely many remaining terms are finite, the entire sequence $\{M_k\}_k$ is bounded. In either case, $\|\mu_k\|_{Z^*}\le M_k$ for positive functionals in the two model spaces, so the multipliers are bounded.
The operators $H'(u_k)$ are uniformly bounded by \ref{Stability:A2} and the uniform boundedness principle.
Finally, approximate stationarity gives
\[
\|\zeta_k\|_U
\le\varepsilon_k+\|\nabla\costr(u_k)\|_U
+\|H'(u_k)\|_{\mathcal L(U,Z)}\|\mu_k\|_{Z^*}
\le C,
\]
which proves \ref{unifrom_bounded:A4}.
\end{proof}

\begin{remark}
 The uniform boundedness of the subgradients $\zeta_k \in \partial \costp(u_k)$ in \eqref{eq:approx_stationarity_xi}, as required in~\ref{unifrom_bounded:A4}, holds under standard assumptions whenever $\{u_k\}_k$ is bounded in $\Us$.
    Indeed, if $\costp$ is proper, lsc, convex, and Lipschitz continuous on bounded subsets of $\Us$, then subgradients are uniformly bounded on bounded sets.
    Typical examples include
    \[
    \costp(u)=\alpha\|u\|_{\Us},\qquad
    \costp(u)=\alpha\|u\|_{L^1(\Omega)}\ \bigl(\Us=L^2(\Omega),\ |\Omega|<\infty\bigr),\qquad
    \costp(u)=\alpha\|u\|_{1}\ \bigl(\Us=\R^m\bigr),
    \]
    and, in the matrix-valued case,
    \[
    \costp(X)=\alpha\|X\|_*,\qquad \Us=\R^{m\times n}\ \text{with Frobenius norm } \|\cdot\|_F.
    \]
    Hence, these functionals satisfy~\ref{unifrom_bounded:A4} along bounded sequences.
    For an indicator $\costp=\indicator_C$, membership of $u_k$ in $C$ does not imply boundedness of the selected subgradients.
    Assumption~\ref{unifrom_bounded:A4} concerns the subgradients satisfying \eqref{eq:approx_stationarity_xi}, not the whole subdifferential.
    Alternatively, \cref{lem:A4-compactness} verifies this boundedness for logarithmic and power barriers under its hypotheses, without requiring Lipschitz continuity of $\costp$.
\end{remark}

In general, condition~\ref{Stability:A1} holds only along a subsequence $\{u_{k,j}\}_j \subset \{u_k\}_k$ such that $u_{k,j} \rightharpoonup  u^{\star}$ for some $u^{\star}$ satisfying~\ref{CQ1}.
In this situation, \cref{thm:outer-convergence} implies for the power-type barrier that $r_{k,j}\to 0$ as $j \to \infty$, but without a bound on the gaps between its indices, this does not give a global outer-iteration count.
Consequently, we can only conclude that
\begin{equation}
    \liminf_{k\to \infty} r_k
    = \liminf_{k \to \infty} \bigl|\langle \mu_k, H(u_k) \rangle_{Z^{\ast},Z} \bigr|
    = 0.
\end{equation}
In the following, we focus on settings in which condition~\ref{Stability:A1} holds for the entire sequence $\{u_k\}_k$, so that the complexity result \eqref{eq:complexity_outer_iterations:power_barrier} is applicable globally.

We can now state the main convergence theorem, which combines multiplier boundedness, complementarity convergence, and the collective compactness condition \ref{barrier-to-KKT:A5} to show that accumulation points of the primal-dual sequence generated by \cref{alg:IP} satisfy the KKT conditions of the original problem \eqref{eq:Reduced}.

\begin{theorem}[Barrier limit yields KKT]\label{prop:barrier-to-KKT}
    Let an infinite sequence $\{(\nu_k,\varepsilon_k,u_k,\mu_k)\}_k \subset (0,\infty)^2 \times U \times Z^{\ast}$ be generated by \cref{alg:IP}, using either logarithmic or power barriers.
    Assume \ref{Stability:A1}--\ref{Stability:A3} and \ref{barrier-to-KKT:A5}.
    Then there exist a subsequence, not relabeled, and $\mu^{\star} \in Z_+^{\ast}$ such that
    \[
    u_k \rightharpoonup u^{\star} \ \text{ in } U
    , \qquad
    H(u_k) \to H(u^{\star}) \le_Z 0 \ \text{ in } Z
    ,\qquad
    \mu_k \rightharpoonup^{\ast} \mu^{\star}
    , \qquad
    \langle \mu^{\star}, H(u^{\star}) \rangle_{Z^{\ast},Z} = 0,
    \]
    where $(u^{\star},\mu^{\star})$ fulfills the KKT inclusion
    \begin{equation}\label{eq:KKT-limit}
        0 \in \nabla\costr(u^{\star}) + \partial \costp (u^{\star}) + H'(u^{\star})^{\ast}\mu^{\star}.
    \end{equation}
\end{theorem}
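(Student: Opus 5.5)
By \cref{lem:A4-compactness}, which applies under \ref{Stability:A1}--\ref{Stability:A3} and \ref{barrier-to-KKT:A5}, the multipliers $\mu_k$ and the selected subgradients $\zeta_k\in\partial\costp(u_k)$ from \eqref{eq:approx_stationarity_xi} are uniformly bounded. The plan is to pass to the limit in the approximate stationarity relation, using this boundedness together with the compactness supplied by \ref{barrier-to-KKT:A5}.

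\textbf{Step 1: limits of the primal-dual sequence.} By \ref{Stability:A1} we already have $u_k\rightharpoonup u^\star$, and by \ref{Stability:A2} we get $H(u_k)\to H(u^\star)$ in $Z$. Since $H(u_k)\in Z_-$ and $Z_-$ is closed, $H(u^\star)\le_Z 0$. Because $\{\mu_k\}$ is bounded in $Z^\ast$ and $Z$ is separable in both model cases ($\R^q$ and $C(\overline K)$), the sequential Banach--Alaoglu theorem gives a subsequence with $\mu_k\rightharpoonup^\ast\mu^\star$. The cone $Z_+^\ast$ is weak-$\ast$ closed, so $\mu^\star\in Z_+^\ast$.

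\textbf{Step 2: complementarity.} Weak-$\ast$ convergence of $\mu_k$ combined with strong convergence of $H(u_k)$ gives
\[
\langle\mu_k,H(u_k)\rangle_{Z^\ast,Z}\to\langle\mu^\star,H(u^\star)\rangle_{Z^\ast,Z}.
\]
By \cref{thm:outer-convergence}, $r_k\to0$ for both kernels; for the power kernel this uses \ref{unifrom_bounded:A4}, which holds by \cref{lem:A4-compactness}. Hence $\langle\mu^\star,H(u^\star)\rangle_{Z^\ast,Z}=0$.

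\textbf{Step 3: the KKT inclusion.} This is the main obstacle, because it requires identifying the limit of $H'(u_k)^\ast\mu_k$. Since $\{\mu_k\}$ is bounded, \ref{barrier-to-KKT:A5} yields a further subsequence with $H'(u_k)^\ast\mu_k\to w$ strongly in $U$. To identify $w$, fix any $h\in U$ and write
\[
\langle H'(u_k)^\ast\mu_k,h\rangle_U=\langle\mu_k,H'(u_k)h\rangle_{Z^\ast,Z}.
\]
By \ref{Stability:A2}, $H'(u_k)h\to H'(u^\star)h$ strongly in $Z$, so the right-hand side converges to $\langle\mu^\star,H'(u^\star)h\rangle_{Z^\ast,Z}$. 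Therefore $w=H'(u^\star)^\ast\mu^\star$. By \ref{Stability:A3}, $\nabla\costr(u_k)\to\nabla\costr(u^\star)$ strongly in $U$. The approximate stationarity estimate \eqref{eq:approx_stationarity_xi}, with $\varepsilon_k\to0$, then gives
\[
\zeta_k\to\zeta^\star\coloneqq-\nabla\costr(u^\star)-H'(u^\star)^\ast\mu^\star \quad\text{strongly in } U.
\]

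\textbf{Step 4: closing the argument.} Since $\zeta_k$ converges strongly and $u_k$ converges weakly, the graph of $\partial\costp$ cannot be closed by a direct appeal in the weak$\times$strong sense. Instead, pass to the limit in the subgradient inequality
\[
\costp(v)\ge\costp(u_k)+\langle\zeta_k,v-u_k\rangle_U \qquad \forall v\in U.
\]
The pairing $\langle\zeta_k,v-u_k\rangle_U$ converges to $\langle\zeta^\star,v-u^\star\rangle_U$, since it is the pairing of a strongly convergent sequence with a weakly convergent one. Moreover, $\liminf\costp(u_k)\ge\costp(u^\star)$ by weak lower semicontinuity of the convex lsc function $\costp$. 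Taking the $\liminf$ in the inequality gives $\zeta^\star\in\partial\costp(u^\star)$, which is exactly \eqref{eq:KKT-limit}.
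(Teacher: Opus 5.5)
Your proposal is correct and follows the paper's proof essentially step for step. Both arguments use the same ingredients: bounded multipliers via \cref{lem:A4-compactness}, weak-$\ast$ extraction, identification of the limit of $H'(u_k)^\ast\mu_k$ by testing with fixed $h$ and upgrading to strong convergence through \ref{barrier-to-KKT:A5}, complementarity from $r_k\to0$, and closedness of the graph of $\partial\costp$. One remark in Step~4 is wrong, however. Sequential closedness of that graph in the weak$\times$strong topology is precisely the standard result the paper cites, and your $\liminf$ argument on the subgradient inequality is just its proof, so a direct appeal would have been fine.
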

\begin{proof}
    By \ref{Stability:A1} and \ref{Stability:A2}, we have $H(u_k)\to H(u^{\star})$ in $Z$.
    Since $H(u_k)\in \interior (Z_-)$ for all $k$ and $Z_-$ is closed, we get $H(u^\star)\in Z_-$.
    
    By \cref{lem:A4-compactness}, the multipliers $\{\mu_k\}_k$ are bounded in $Z^\ast$ and \ref{unifrom_bounded:A4} holds for both barrier kernels.
    If $Z=\R^q$, this gives compactness directly because the space is finite-dimensional.
    If $Z=C(K)$ with $K$ compact metric, then $C(K)$ is separable; hence bounded subsets of $C(K)^\ast$ are weak-$\ast$ metrizable on bounded sets.
    By the Banach--Alaoglu theorem \cite[Thm~3.16]{Brez2011functional}, there exist $\mu^\star\in Z^\ast$ and a further subsequence (not relabeled) such that $\mu_k\rightharpoonup^\ast\mu^\star$ in $Z^\ast$.
    Since each $\mu_{k}\in Z_+^{\ast}$ and $Z_+^{\ast}$ is weak-$\star$ closed, we have $\mu^{\star}\in Z_+^{\ast}$.
    
    Due to \ref{Stability:A2}, $H'(u_k)v\to H'(u^\star)v$ in $Z$ for every fixed $v\in U$.
    Together with the fact that $\mu_{k}\rightharpoonup^{\ast} \mu^{\star}$, we have for every $v \in U$ that 
    \begin{equation*}
        (H'(u_k)^{\ast} \mu_k, v)_U = \langle \mu_{k}, H'(u_k)v \rangle_{Z^{\ast},Z} \to  \langle \mu^{\star}, H'(u^{\star}) v \rangle_{Z^{\ast},Z} = (H'(u^{\star})^{\ast} \mu^{\star}, v)_U. 
    \end{equation*}
    Since $v$ was arbitrary, $H'(u_k)^*\mu_k\rightharpoonup H'(u^\star)^*\mu^\star$ in $U$. Assumption \ref{barrier-to-KKT:A5} makes this sequence relatively compact, and uniqueness of its weak limit therefore gives strong convergence.
    
    Since $\nabla\costr (u_k)+H'(u_k)^{\ast}\mu_{k} \to \nabla\costr (u^{\star})+H'(u^{\star})^{\ast}\mu^{\star} $, $u_k \rightharpoonup u^{\star}$, and the graph of $\partial\costp$ is sequentially closed in the weak-strong topology because $\costp$ is proper, lsc, and convex \cite[Prop. 16.26]{BausComb2017convex}, we can pass to the limit in \cref{lem:outer_iterates_behaviour}\ref{lem:outer_iterates_behaviour:iii} and, thus, \eqref{eq:KKT-limit} follows.
    In particular, $u^\star\in\dom\partial\costp\subseteq\dom\costp$, completing primal feasibility.
    
    Due to \ref{Stability:A1} and \ref{Stability:A2}, we have $H(u_k) \to H(u^{\star})$ and $\mu_{k}\rightharpoonup^{\ast} \mu^{\star}$.
    Furthermore, invoking \cref{thm:outer-convergence},  we have that
    $\langle \mu_{k},\,H(u_k)\rangle_{Z^{\ast},Z} \to 0$.
    Passing to the limit using the strong convergence of $H(u_k)$ and weak-$\star$ convergence of $\mu_{k}$ yields $\langle \mu^{\star}, H(u^{\star})\rangle_{Z^{\ast},Z} = 0$.
    Together with $H(u^{\star})\le_Z 0$ and $\mu^{\star}\in Z_+^{\ast}$ we obtain the complementarity condition, concluding the proof.
\end{proof}

\begin{remark}
    In the finite-dimensional setting, Assumption~\ref{barrier-to-KKT:A5} follows from \ref{G2}, since all norms (and hence the induced topologies) are equivalent on finite-dimensional spaces.
\end{remark}

We now consider the convex case.
Under \ref{C1}--\ref{C3}, the following argument establishes global optimality and stronger convergence conclusions without \ref{barrier-to-KKT:A5}.

\begin{theorem}[Convex case]\label{prop:convex_case}
Let $\nu_k\downarrow0$, and for each $k$, let
$u_k\in\Usad^\circ$ be an $\varepsilon_k$-stationary
point of \eqref{eq:Reduced_barrier_problem}, with
$\varepsilon_k\downarrow0$.
We consider either the logarithmic barrier or the power
barrier with exponent $p>0$.
We assume that:
\begin{enumerate}[label=(C\arabic*)]
    \item\label{C1}
    $\cost\coloneqq\costr+\costp$ is proper, lower
    semicontinuous, convex, and coercive.
    \item\label{C2}
    $H:\Us\to Z$ is order-convex and sequentially
    weak-to-strong continuous.
    \item\label{C3}
    Slater's condition \ref{CQ2} holds.
\end{enumerate}
Then the following statements hold:
\begin{enumerate}[label=(\roman*)]
    \item\label{S1}
    The sequence $\{u_k\}_k$ is bounded, and every weak
    accumulation point solves \eqref{eq:Reduced}.
    Moreover, the objective values converge to the
    minimum value of \eqref{eq:Reduced}.

    \item\label{S2}
    If \eqref{eq:Reduced} has a unique solution $u^\star$,
    then $u_k\rightharpoonup u^\star$ in $U$.

    \item\label{S3}
    If a solution $u^\star$ satisfies the global
    quadratic growth condition
    \begin{equation}\label{QG}
        \cost(u)\ge\cost(u^\star)
        +c\|u-u^\star\|_U^2
        \qquad\text{for every }u\in U_{\rm ad},
    \end{equation}
    for some $c>0$, then $u_k\to u^\star$ strongly in $U$.
\end{enumerate}
\end{theorem}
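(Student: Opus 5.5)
The plan is to exploit convexity of the barrier subproblem. By \ref{C2} and \ref{B1}--\ref{B2}, the composition $u\mapsto\barrierfun(H(u))$ is convex on $\Usad^\circ$: $\barrierfun$ is convex and order-monotone, because $\nabla\barrierfun\in Z_+^\ast$. Hence $\cost_{\nu_k}$ is convex on the convex set $\Usad^\circ$. The $\varepsilon_k$-stationarity then yields an approximate global inequality. With $\xi_k\in\partial\cost_{\nu_k}(u_k)$ and $\|\xi_k\|\le\varepsilon_k$, for every $w\in\Usad^\circ$ we have
\[
\cost(u_k)+\nu_k\barrierfun(H(u_k))\le \cost(w)+\nu_k\barrierfun(H(w))+\varepsilon_k\|w-u_k\|_U .
\]
The subdifferential sum rule is justified because $\costs_{\nu_k}$ is smooth on the open set $\{H<_Z0\}$. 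I would avoid needing the barrier lower bound directly by using the KKT-type estimate instead. Convexity of $H$ gives $H(w)\ge_Z H(u_k)+H'(u_k)(w-u_k)$. Pairing with $\mu_k\in Z_+^\ast$ gives $\langle\mu_k,H'(u_k)(w-u_k)\rangle\le\langle\mu_k,H(w)\rangle-\langle\mu_k,H(u_k)\rangle\le r_k$ for feasible $w$. Combining this with $\zeta_k\in\partial\costp(u_k)$ and convexity of $\costr$ gives, for all $w\in\Usad$,
\[
\cost(u_k)\le \cost(w)+\langle\mu_k,H'(u_k)(w-u_k)\rangle+\varepsilon_k\|w-u_k\|_U\le \cost(w)+r_k+\varepsilon_k\|w-u_k\|_U .
\]
Here $r_k=|\langle\mu_k,H(u_k)\rangle|$. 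In the log case $r_k=\meas(\Xi)\nu_k$ by \cref{thm:outer-convergence}.

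\textbf{Step 1 (boundedness).} Fix the Slater point $\bar u$ from \ref{CQ2} and take $w=\bar u$. This gives $\cost(u_k)\le \cost(\bar u)+r_k+\varepsilon_k(\|\bar u\|+\|u_k\|)$. Coercivity in \ref{C1} (superlinear, or at least dominating linear growth; if only coercive, I would use a standard convexity argument) then gives boundedness of $\{u_k\}$, provided $r_k$ is bounded.

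For the power barrier this is the main obstacle. I would bound $r_k$ without \ref{barrier-to-KKT:A5} by rerunning the argument of \cref{lem:A4-compactness} with Slater in place of the linearized condition. Convexity gives $H(u_k)+H'(u_k)(\bar u-u_k)\le_Z H(\bar u)\le_Z-\eta\mathbf1$ \emph{directly}, with no compactness needed. The chain \eqref{eq:boundedness_of_mul_es5} then gives $\eta M_k\le r_k+\costr(\bar u)-\costr(u_k)+\costp(\bar u)-\costp(u_k)+\varepsilon_k\|\bar u-u_k\|$, after using convexity of $\cost$ in place of the gradient bounds. Combined with the Hölder bound $r_k\le(p\nu_k\meas(\Xi))^{1/(p+1)}M_k^{p/(p+1)}$ and the lower bound on $\cost$ from coercivity, this gives a coupled inequality in $(M_k,\|u_k\|)$. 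Solving it yields boundedness of $M_k$, $r_k$ and $u_k$ simultaneously, and moreover $r_k\to0$. This also replaces \ref{unifrom_bounded:A4}.

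\textbf{Step 2 (optimality of limits and values).} With $u_k$ bounded and $r_k,\varepsilon_k\to0$, the displayed inequality gives $\limsup\cost(u_k)\le\inf_{\Usad}\cost$. For any weak accumulation point $u^\star$, \ref{C2} gives $H(u_{k_j})\to H(u^\star)\le_Z0$, and weak lsc of the convex lsc $\cost$ gives $\cost(u^\star)\le\liminf\cost(u_{k_j})$. Hence $u^\star$ is feasible and optimal. Since $\cost(u_k)\ge\cost(u^\star)$ may fail when $u_k$ is only strictly feasible, I note that $u_k\in\Usad$, so $\cost(u_k)\ge\min$ and the values converge, which proves \ref{S1}. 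For \ref{S2}, every subsequence has a further subsequence converging weakly to the unique solution, so the whole sequence converges weakly. For \ref{S3}, apply \eqref{QG} with $u=u_k\in\Usad$ to get $c\|u_k-u^\star\|^2\le\cost(u_k)-\cost(u^\star)\to0$.
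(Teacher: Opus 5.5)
Your proof is correct, but it takes a different route from the paper.

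\textbf{The paper's route.} The paper compares barrier objective values. Convexity of the full subproblem objective $\cost_{\nu_k}$ gives $\cost_{\nu_k}(u_k)\le\cost_{\nu_k}(v)+\varepsilon_k\|u_k-v\|_U$ for strictly feasible $v$. This then requires two further ingredients:
\begin{enumerate}[label=(\alph*)]
\item a lower bound $\barrierfun(H(u))\ge -C_0-C_1\|u-\hat u\|_U$, which is trivial for the power kernel since $\barrierfun\ge 0$, and comes from the gradient inequality for $H$ at the Slater point and $-\log s\ge -s$ for the logarithmic kernel;
\item an interpolation $u^t=(1-t)u+t\hat u$, letting $k\to\infty$ and then $t\downarrow 0$, to reach feasible but not strictly feasible comparison points $u$.
\end{enumerate}

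\textbf{Your route.} You use the weak-duality estimate $\cost(u_k)\le\cost(w)+r_k+\varepsilon_k\|w-u_k\|_U$ for every $w\in\Usad$. It needs only convexity of $\cost$, order-convexity of $H$, and $\mu_k\in Z_+^\ast$. It never uses barrier values and removes the interpolation step. It also bounds the optimality gap quantitatively by the complementarity residual, for instance by $\meas(\Xi)\nu_k+\varepsilon_k\|w-u_k\|_U$ for the logarithmic kernel.

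\textbf{The cost of your route.} For the power kernel you must show $r_k\to 0$ without the hypotheses of \cref{thm:outer-convergence}. Your Slater-based rerun of the multiplier bound does this correctly when combined with the H\"older estimate and the linear minorant $\cost\ge a\|\cdot\|_U-b$. As a byproduct it gives bounded $M_k$ and $r_k=O(\nu_k^{1/(p+1)})$ in the convex case. The paper instead gets boundedness in one line from $\barrierfun_{\rm pow}\ge 0$.

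\textbf{Small points to tighten.}
\begin{enumerate}[label=(\roman*)]
\item (C1) makes only $\cost=\costr+\costp$ convex, not $\costr$ by itself. You should therefore justify $\cost(w)\ge\cost(u_k)+\langle\nabla\costr(u_k)+\zeta_k,w-u_k\rangle_U$ through the one-sided directional derivative of the convex $\cost$ along $w-u_k$. The paper implicitly relies on the same fact.
\item State explicitly that a proper, lsc, convex, coercive $\cost$ admits a minorant $a\|\cdot\|_U-b$ with $a>0$. Do not leave this to ``a standard convexity argument''; your boundedness step depends on it.
\item Note that $u^\star\in\dom\costp$ follows from $\cost(u^\star)\le\cost(\bar u)<\infty$.
\item Your remark that $\cost(u_k)\ge\cost(u^\star)$ ``may fail'' is unnecessary, since strict feasibility already gives $u_k\in\Usad$.
\end{enumerate}
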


\begin{proof}
\ref{S1}
We choose a Slater point $\hat u\in\dom\costp$ and
$\sigma>0$ such that
$H(\hat u)\le_Z-\sigma\mathbf1$.
By approximate stationarity, we choose
$\xi_k\in\partial\cost_{\nu_k}(u_k)$ such that $\|\xi_k\|_U\le\varepsilon_k$. For both kernels, convexity of the full barrier objective and approximate stationarity imply that 
\begin{equation}\label{eq:convex_estimate}
    \cost_{\nu_k}(u_k)-\cost_{\nu_k}(v) \le\langle\xi_k,u_k-v\rangle_U\le\|\xi_k\|_U\,\|u_k-v\|_U\le\varepsilon_k\|u_k-v\|_U.
\end{equation}
We first show that, for either kernel, there exist
$C_0,C_1\ge0$ such that
\begin{equation}
\label{eq:convex-barrier-lower}
    \barrierfun_\phi(H(u))
    \ge-C_0-C_1\|u-\hat u\|_U
    \qquad\text{for every }u\in\Usad^\circ.
\end{equation}
For the power kernel, this holds with $C_0=C_1=0$. For the logarithmic kernel, convexity of $H$ gives
\[
    H(u)\ge_Z H(\hat u)+H'(\hat u)(u-\hat u).
\]
Consequently, for $u\in\Usad^\circ$ and $\xi\in\Xi$,
\[
    0<-H(u)(\xi)
    \le c_0+c_1\|u-\hat u\|_U,
\]
where $c_0=\|H(\hat u)\|_Z$ and
$c_1=\|H'(\hat u)\|_{\mathcal L(U,Z)}$.
Using $-\log s\ge-s$ for $s>0$, we obtain
\[
    \barrierfun_{\log}(H(u))
    \ge-\meas(\Xi)
          \bigl(c_0+c_1\|u-\hat u\|_U\bigr).
\]
Thus, for either kernel, there exist constants
$C_0,C_1\ge0$ such that \eqref{eq:convex-barrier-lower} holds. By \ref{C1} and \cite[Prop.~14.16 (iv)]{BausComb2017convex},
there exist $a>0$ and $b\in\R$ such that
$\cost(u)\ge a\|u-\hat u\|_U-b$ for every $u\in U$.
Applying \eqref{eq:convex_estimate} with $v=\hat u$
and using the preceding barrier bound, we obtain
\[
(a-\nu_kC_1-\varepsilon_k)\|u_k-\hat u\|_U\le \cost(\hat u)+b+\nu_k\bigl(\barrierfun_\phi(H(\hat u))+C_0\bigr).
\]
Since $\nu_k,\varepsilon_k\to0$, the coefficient on
the left is at least $a/2$ for all sufficiently large $k$,
while the right-hand side is bounded.
Thus, $\{u_k\}_k$ is bounded.
The barrier bound also gives a constant $C_B\ge0$ such that
\begin{equation}
\label{eq:convex_lower_bound_Bar}
    \barrierfun_\phi(H(u_k))\ge-C_B
    \qquad\text{for every }k.
\end{equation}    
For any feasible $u$ with $\cost(u)<\infty$, we write
$u^t=(1-t)u+t\hat u$, where $t\in(0,1]$.
By convexity, $u^t\in\dom\costp$ and
$H(u^t)\le_Z-t\sigma\mathbf1$.
Hence, the barrier value at $u^t$ is finite for both
kernels, \eqref{eq:convex_lower_bound_Bar} and \eqref{eq:convex_estimate} gives
\[
    \cost(u_k)
    \le\cost(u^t)
       +\nu_k\bigl(\barrierfun_\phi(H(u^t))+C_B\bigr)
       +\varepsilon_k\|u_k-u^t\|_U.
\]
We first send $k\to\infty$ and then $t\downarrow0$.
Convexity and lower semicontinuity give
$\cost(u^t)\to\cost(u)$, so
\[
    \limsup_{k\to\infty}\cost(u_k)\le\cost(u)
    \qquad\text{for every feasible }u
    \text{ with }\cost(u)<\infty.
\]
Boundedness in the Hilbert space $U$ ensures the existence
of a weak accumulation point.
For any such point $u^\star$, we choose a subsequence
$u_{k_j}\rightharpoonup u^\star$.
By \ref{C2}, we have $H(u^\star)\le_Z0$.
Using weak lower semicontinuity, we obtain
\begin{equation}\label{eq:limsup}
    \cost(u^\star)
    \le\liminf_{j\to\infty}\cost(u_{k_j})
    \le\limsup_{k\to\infty}\cost(u_k)
    \le\cost(u)
\end{equation}
for every feasible $u$ with finite objective value.
Taking $u=\hat u$ shows that $\cost(u^\star)<\infty$.
Thus, $u^\star$ is feasible and solves the original problem.
Finally, feasibility of $u_k$ gives
$\cost(u^\star)\le\cost(u_k)$.
Together with \eqref{eq:limsup} for $u=u^\star$, this yields
\begin{equation}
\label{eq:convex_convergence_obj}
    \lim_{k\to\infty}\cost(u_k)=\cost(u^\star).
\end{equation}

\ref{S2}
Suppose that the solution $u^\star$ is unique.
Every subsequence of the bounded sequence $\{u_k\}_k$
has a further weakly convergent subsequence, and
\ref{S1} identifies its limit as $u^\star$.
Consequently, the whole sequence converges weakly
to $u^\star$.

\ref{S3}
By \ref{S1}, \eqref{eq:convex_convergence_obj} holds.
Since the iterates ${u_k} \subset U_0$, \eqref{QG} gives
\[
    0\le c\|u_k-u^\star\|_U^2
    \le\cost(u_k)-\cost(u^\star)
    \to0.
\]
Therefore, $u_k\to u^\star$ strongly in $U$.
\end{proof}

\subsection{Overall complexity bound for barrier methods}\label{sec:overall_complexity}

In this subsection, we estimate the complexity of \cref{alg:IP} when combined with \cref{alg:NMPG} for the barrier subproblem solves.
We use the unified notation from \cref{rem:standing-notation}, so that the discussion covers both \(Z=\R^q\) (componentwise constraints) and \(Z=C(\overline K)\) (pointwise constraints).

\begin{lemma}[Slack and barrier-curvature estimates]\label{lem:slack-curvature}
    Let $u_k\in\Usad^\circ$ be the approximate stationary points returned by the inner solves, and write $s_k\coloneqq-H(u_k)>0$.
    For the logarithmic or power barrier, we denote the density of the multiplier $\mu_k$ in \eqref{eq:lag_Mul} by $\rho_k$, so that  $\rho_k=\nu_k\phi'(-s_k)$. In finite dimensions, $\rho_k$ denotes the components of the multiplier vector.
    We assume that, for some $M>0$ independent of $k$, it holds  $0\le\rho_k(\xi)\le M$ for $\meas$-almost every $\xi\in\Xi$.
    Then there exist constants $c,C>0$, independent of $k$, such that
    \[
        s_k\ge_Z c\nu_k^\beta\mathbf1,
        \qquad
        0 \le_Z\nu_k\phi''(-s_k) \le_Z
         C\nu_k^{-\beta}\mathbf1 ,
        \qquad
        \beta=
        \begin{cases}
            1,&\phi(t)=-\log(-t),\\
            1/(p+1),&\phi(t)=(-t)^{-p},\quad p>0.
        \end{cases}
    \]
    The slack inequality holds componentwise for $Z=\R^q$.
    For $Z=C(\overline K)$, it holds everywhere by continuity of $s_k$ and full support of $\meas$.
\end{lemma}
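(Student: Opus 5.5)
The plan is to work pointwise, with the explicit kernel formulas from \cref{tab:barrier-derivatives}, by inverting the relation $\rho_k=\nu_k\phi'(-s_k)$ using the upper bound $\rho_k\le M$. I first prove the estimates for $\meas$-almost every $\xi$ and then upgrade to ``everywhere''. In the case $Z=\R^q$ with counting measure, ``almost every'' already means every component, so no upgrade is needed.

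\textbf{Logarithmic kernel.} Here $\phi'(-s)=1/s$, so $\rho_k=\nu_k/s_k$. From $\rho_k\le M$ we get $s_k\ge \nu_k/M$ $\meas$-a.e., which is the slack bound with $c=1/M$ and $\beta=1$. For the curvature term, $\phi''(-s)=s^{-2}$, hence
\[
\nu_k\phi''(-s_k)=\frac{\nu_k}{s_k^2}=\frac{\rho_k^2}{\nu_k}\le M^2\nu_k^{-1}.
\]
This gives $C=M^2$. Nonnegativity is immediate because $\phi''>0$.

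\textbf{Power kernel.} Here $\rho_k=p\nu_k s_k^{-(p+1)}\le M$. This yields $s_k^{p+1}\ge p\nu_k/M$, that is, $s_k\ge (p/M)^{1/(p+1)}\nu_k^{1/(p+1)}$. For the curvature term, $\nu_k\phi''(-s_k)=(p+1)\rho_k/s_k$. Combining $\rho_k\le M$ with the slack lower bound gives
\[
\nu_k\phi''(-s_k)\le (p+1)M\,(M/p)^{1/(p+1)}\,\nu_k^{-1/(p+1)},
\]
which is the claimed estimate with $\beta=1/(p+1)$. All constants depend only on $M$ and $p$, so they are independent of $k$.

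\textbf{Upgrade to ``everywhere'' for $Z=C(\overline K)$.} This is the only delicate step. The set $\{\xi: s_k(\xi)<c\nu_k^\beta\}$ is open in $\overline K$ because $s_k$ is continuous. It is also $\meas$-null by the a.e.\ bound. Since $\meas$ has full support, every nonempty relatively open set has positive measure, so this set is empty. Hence the slack inequality holds at every point, i.e.\ in the order of $Z$.

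The curvature bound then follows pointwise everywhere from the slack bound alone, without using $\rho_k$ again. For the logarithmic kernel, $\nu_k s_k^{-2}\le c^{-2}\nu_k^{-1}$. For the power kernel, $\nu_k p(p+1)s_k^{-(p+2)}\le p(p+1)c^{-(p+2)}\nu_k^{1-(p+2)/(p+1)}=p(p+1)c^{-(p+2)}\nu_k^{-1/(p+1)}$. Redefining $C$ as these constants gives $0\le_Z\nu_k\phi''(-s_k)\le_Z C\nu_k^{-\beta}\mathbf1$ everywhere.

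I expect no real obstacle beyond this measure-zero-to-everywhere step, which is handled by continuity and full support as described.
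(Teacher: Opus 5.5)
Your proposal is correct and follows the paper's proof. You invert $\rho_k=\nu_k\phi'(-s_k)\le M$ pointwise for each kernel and get the same constants: your $(p+1)M(M/p)^{1/(p+1)}$ equals the paper's $p(p+1)(M/p)^{(p+2)/(p+1)}$. You then pass from $\meas$-a.e.\ to everywhere via continuity of $s_k$ and full support of $\meas$; your open-null-set argument spells out what the paper states in one line.
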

\begin{proof}
The multiplier identity and the assumed bound give
\[
    0<\rho_k(\xi)
    =\nu_k\phi'(-s_k(\xi))\le M
    \qquad\text{for $\meas$-almost every }\xi\in\Xi.
\]
For the logarithmic kernel, we consequently obtain
\[
    s_k(\xi)\ge\frac{\nu_k}{M},
    \qquad
    \nu_k\phi''(-s_k(\xi))
    =\frac{\nu_k}{s_k(\xi)^2}
    \le M^2\nu_k^{-1}.
\]
For the power kernel, the identity
$\rho_k(\xi)=p\nu_k s_k(\xi)^{-(p+1)}$ gives
\[
    s_k(\xi)\ge
    \left(\frac{p}{M}\right)^{1/(p+1)}
    \nu_k^{1/(p+1)},
\]
and hence
\[
    \nu_k\phi''(-s_k(\xi))
    \le p(p+1)
       \left(\frac{M}{p}\right)^{(p+2)/(p+1)}
       \nu_k^{-1/(p+1)}.
\]
In $\R^q$, these inequalities hold for every component.
In $C(\overline K)$, continuity and full support of
$\meas$ extend them to every point of $\overline K$.
Thus, in both settings, they yield the claimed
inequalities with respect to the order $\le_Z$
\end{proof}

In finite dimensions, the density bound in \cref{lem:slack-curvature} follows from uniform boundedness of the multipliers. In the continuous-function setting, it is an additional requirement.
The lemma motivates the barrier-parameter dependence assumed below, but concerns the returned approximate stationary points rather than all points visited during the inner solves.

For the complexity statement, we assume
\begin{assumption}\label{ass:uniform-inner}
    For every subproblem \eqref{eq:Reduced_barrier_problem_2}, \cref{ass:inner-hypotheses} holds with
    \begin{equation}\label{eq:uniform-inner-family}
    \begin{gathered}
    \overline\alpha_{\nu_k}\le c_A(1+\Lsmooth(\nu_k)),\quad
    \Lsmooth(\nu_k)\le C_L\nu_k^{-\beta},
    \quad
    \beta=\begin{cases}1,&\text{logarithmic kernel},\\
    1/(p+1),&\text{power kernel},\ p>0.
    \end{cases}
    \end{gathered}
    \end{equation}
\end{assumption}
The constants $c_A,C_L,\ssizelb$ are independent of $k$.
A sufficient route to the last bound is a uniformly bounded inner region, uniform non-barrier constants in \eqref{eq:Lipschitz-smooth-nu}, and the order bound
$-H(u)\ge_Z c\nu_k^\beta\mathbf1$ throughout that region.
The second bound then follows from \cref{lem:central-path-Lip-onesided}; it is a worst-case upper bound and does not assert that the actual curvature diverges.

The first bound in \eqref{eq:uniform-inner-family} is motivated by the backtracking estimate in \cite[Lem.~4(ii)]{azmi2025nonmonotone}, which applies under the corresponding domain and Lipschitz continuity assumptions.
Since these assumptions are not automatic for the barrier subproblems, we retain \eqref{eq:uniform-inner-family} as an explicit hypothesis of the complexity result below.

We now use the warm starts in \cref{alg:IP} to bound the initial objective gaps independently of $k$.
We retain the lower bound on the original objective and use a lower bound on the barrier only along the outer iterates:
\begin{equation}\label{eq:objective-lower-bounds}
J_*:=\inf_{\Usad^\circ}\cost>-\infty,
\qquad \barrierfun(H(u_k))\ge-C_B\quad(k\ge0),
\qquad C_B\ge0.
\end{equation}
For the power kernel we can take $C_B=0$.
For the logarithmic kernel, the second bound follows from \ref{Stability:A1}--\ref{Stability:A2} and is therefore not an additional condition when these assumptions hold.
Indeed, $H(u_k)\to H(u^\star)$ in $Z$, so there is $M\ge1$, including the initial point $u_0$, such that $0<-H(u_k)(\xi)\le M$ for all $k\ge0$ and $\xi\in\Xi$.
Consequently, we can write
\[
\barrierfun(H(u_k))
=-\int_\Xi\log\bigl(-H(u_k)(\xi)\bigr)\,\mathrm d\meas(\xi)
\ge-\meas(\Xi)\log M,
\]
and choose $C_B=\meas(\Xi)\log M$.
\cref{alg:NMPG} keeps every inner objective value below its initial value, since its reference value is a maximum of preceding inner values.
In particular, for $k\ge2$, we obtain
\begin{multline*}
    \cost_{\nu_k}(u_{k-1})
    =
    \cost_{\nu_{k-1}}(u_{k-1}) +(\nu_k-\nu_{k-1})\barrierfun(H(u_{k-1}))
    \\
    \le
    \cost_{\nu_{k-1}}(u_{k-2}) +C_B(\nu_{k-1}-\nu_k)
    \leq
    \ldots
    \leq
    \cost_{\nu_1}(u_0)+C_B(\nu_1-\nu_k).
\end{multline*}
To bound the infima uniformly, we recall that $\cost_{\nu_1}^*>-\infty$ is already part of \cref{ass:inner-hypotheses} for the first subproblem.
For $0<\nu\le\nu_1$ and $u\in\Usad^\circ$, we have
\[
\cost_\nu(u)
=\left(1-\frac{\nu}{\nu_1}\right)\cost(u)
 +\frac{\nu}{\nu_1}\cost_{\nu_1}(u).
\]
Taking the infimum and using \eqref{eq:objective-lower-bounds}, we therefore obtain
\[
\cost_\nu^*
\ge\left(1-\frac{\nu}{\nu_1}\right)J_*
 +\frac{\nu}{\nu_1}\cost_{\nu_1}^*
\ge\min\{J_*,\cost_{\nu_1}^*\}>-\infty.
\]
Consequently,
\begin{equation}\label{eq:uniform-initial-gaps}
0\le\cost_{\nu_k}(u_{k-1})-\cost_{\nu_k}^*
\le\cost_{\nu_1}(u_0)-\min\{J_*,\cost_{\nu_1}^*\}+C_B\nu_1 \eqqcolon C_J <\infty,
\end{equation}
uniformly in $k$.

If, in addition, $\cost$ is $\kappa$-strongly convex and $H$ is order-convex, then $\cost_{\nu_k}$ is $\kappa$-strongly convex on its convex strict domain for every $k$.
Indeed, the barrier kernels are convex and increasing, so $\barrierfun\circ H$ is convex and the same modulus $\kappa>0$ is retained.
This provides the common strong-convexity modulus used in \cref{cor:overall_complexity_unified}\ref{cor:overall_complexity_unified:stronglyconvex} below.

We count the total number of proximal-gradient iterations to reach an $\varepsilon$-KKT point:
\begin{equation}\label{eq:overall_iterations}
\Nitertot(\varepsilon)=\sum_{k=1}^{\Niterout(\varepsilon)}
      \Niterin(\nu_k,\varepsilon_k).
\end{equation}

\begin{corollary}[Overall iteration complexity]\label{cor:overall_complexity_unified}
    Suppose that Assumptions~\ref{ass:inner-hypotheses} and~\ref{ass:uniform-inner} and the objective-gap condition \eqref{eq:objective-lower-bounds} hold for the inner solves in \cref{alg:IP}.
    For the power barrier, assume in addition \ref{Stability:A1}--\ref{unifrom_bounded:A4}, so that the outer-iteration bound \eqref{eq:complexity_outer_iterations:power_barrier} applies.
    For the logarithmic barrier, the outer count is given by \eqref{eq:complexity_outer_iterations:log_barrier}.
    Let $N\coloneqq \Niterout(\varepsilon)$.
    Then, for $\nu_k=\nu_1\theta_\nu^{k-1}$, $\varepsilon_k=\varepsilon_1\theta_\varepsilon^{k-1}$ and fixed $\theta_\nu,\theta_\varepsilon\in(0,1)$, the following bounds hold.
    \begin{enumerate}[label=(\roman*)]
        \item \label{cor:overall_complexity_unified:general}
        In the general case,
        \begin{equation}\label{eq:total-general}
        \Nitertot(\varepsilon)
        =
        O\left(
        (\mmax+1)\nu_1^{-2\beta}\varepsilon_1^{-2} \frac{\Theta^N-1}{\Theta-1}\right),
        \qquad
        \Theta=\theta_\nu^{-2\beta}\theta_\varepsilon^{-2}>1.
        \end{equation}
        \item \label{cor:overall_complexity_unified:stronglyconvex}
        If $\cost_{\nu_k}$ is $\kappa$-strongly convex with the same $\kappa>0$ for every $k$, then
        \begin{equation}\label{eq:total-strong}
        \Nitertot(\varepsilon)
        =
        O\left((\mmax+1)\nu_1^{-2\beta} N\theta_\nu^{-2\beta(N-1)}\right).
        \end{equation}
    \end{enumerate}
    The implied constants are independent of $\varepsilon$ and $k$, but may depend on the initial data, the fixed geometric schedules, the constants in \eqref{eq:uniform-inner-family}, and $\kappa$ in part~(ii).
    Since $\Niterout(\varepsilon)=O(\log(1/\varepsilon))$ for the logarithmic barrier and, under \ref{Stability:A1}--\ref{unifrom_bounded:A4}, for the power barrier, \eqref{eq:total-general}--\eqref{eq:total-strong} can equivalently be written as polynomial bounds in $1/\varepsilon$.
    The displayed geometric form is retained because it shows the dependence on the barrier and tolerance schedules explicitly.
\end{corollary}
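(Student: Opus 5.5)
The plan is to sum the per-subproblem bounds of \cref{prop:inner-complexity} over the outer loop. The inputs are the uniform initial-gap bound \eqref{eq:uniform-initial-gaps} and the $\nu$-scalings in \cref{ass:uniform-inner}.

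\textbf{Bounding $c_{\nu_k}$.} For the $k$-th inner solve, warm-started at $u_{k-1}$, \eqref{eq:uniform-initial-gaps} gives $\cost_{\nu_k}(u_{k-1})-\cost_{\nu_k}^*\le C_J$ uniformly in $k$. Hence $C_{\nu_k}(u_{k-1})^2\le c_{\nu_k}C_J$. Using \eqref{eq:K-inner} and \eqref{eq:uniform-inner-family}, and the fact that $\nu_k\le\nu_1$ so $\nu_k^{-\beta}$ is bounded below, I would establish
\[
c_{\nu_k}\le \delta^{-1}\Bigl(c_A(1+C_L\nu_k^{-\beta})+2C_L\nu_k^{-\beta}+C_L^2\nu_k^{-2\beta}/\ssizelb\Bigr)\le \hat C\,\nu_k^{-2\beta},
\]
where $\hat C$ is independent of $k$. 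Absorbing the lower-order terms into $\nu_k^{-2\beta}$ uses $\nu_k^{-\beta}\ge\nu_1^{-\beta}$, so the constant depends on $\nu_1$. This is the one place where constants need care.

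\textbf{General case (i).} Inserting into \eqref{eq:inner_complexity} with tolerance $\varepsilon_k$ gives
\[
\Niterin(\nu_k,\varepsilon_k)\le(\mmax+1)\bigl(\hat C C_J\nu_k^{-2\beta}\varepsilon_k^{-2}+1\bigr).
\]
Along the outer iterations I may simply use $\varepsilon_k\ge\varepsilon_1\theta_\varepsilon^{k-1}$, since the $\max$ with $\varepsilon$ only increases $\varepsilon_k$. This yields
\[
\nu_k^{-2\beta}\varepsilon_k^{-2}\le\nu_1^{-2\beta}\varepsilon_1^{-2}\Theta^{k-1}, \qquad \Theta=\theta_\nu^{-2\beta}\theta_\varepsilon^{-2}.
\]
Summing over $k=1,\dots,N$ then gives the geometric sum $(\Theta^N-1)/(\Theta-1)$. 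The additive $+(\mmax+1)$ per iteration contributes $(\mmax+1)N\le(\mmax+1)(\Theta^N-1)/(\Theta-1)$, because $\Theta>1$, so it is absorbed into the $O$. This proves \eqref{eq:total-general}.

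\textbf{Strongly convex case (ii).} Here I use \eqref{eq:inner_complexity_strong_convexity}. Since $\sigma_{\nu_k}=(1+2\kappa/c_{\nu_k})^{-1}$, we have $|\log\sigma_{\nu_k}|=\log(1+2\kappa/c_{\nu_k})$. Because $2\kappa/c_{\nu_k}$ is bounded above uniformly (as $c_{\nu_k}\ge\delta^{-1}\ssizelb>0$, via $\overline\alpha\ge\ssizelb$), the elementary inequality $\log(1+x)\ge x/(1+x)$ gives $|\log\sigma_{\nu_k}|^{-1}\lesssim c_{\nu_k}/\kappa\lesssim\nu_k^{-2\beta}$.

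The logarithmic numerator is controlled by
\[
\log_+\bigl(C_{\nu_k}/\varepsilon_k\bigr)\le \tfrac12\log_+(\hat CC_J)+\beta\log(1/\nu_k)+\log(1/\varepsilon_k),
\]
which is $O(k)$ for geometric schedules, and is at most $O(N)$ for $k\le N$. Therefore $\Niterin(\nu_k,\varepsilon_k)=O\bigl((\mmax+1)N\nu_1^{-2\beta}\theta_\nu^{-2\beta(k-1)}\bigr)$.

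Summing over $k$ gives a geometric sum in $\theta_\nu^{-2\beta}$, which is bounded by a constant multiple of its last term. This yields $O\bigl((\mmax+1)\nu_1^{-2\beta}N\theta_\nu^{-2\beta(N-1)}\bigr)$, i.e. \eqref{eq:total-strong}.

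\textbf{Main obstacle and final remark.} The main obstacle I expect is this numerator bookkeeping: verifying that the $O(k)$ log factor is uniformly $O(N)$ with constants independent of $\varepsilon$. I would handle it via $k\le N$ together with the explicit forms $\log(1/\nu_k)=\log(1/\nu_1)+(k-1)\log(1/\theta_\nu)$ and the analogous formula for $\varepsilon_k$. Finally, because $N=O(\log(1/\varepsilon))$ by \cref{thm:outer-convergence}, $\Theta^N$ and $\theta_\nu^{-2\beta N}$ are powers of $1/\varepsilon$, which gives the polynomial reformulation.
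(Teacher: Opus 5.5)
Your proposal is correct and follows essentially the same route as the paper. You bound $c_{\nu_k}=O(\nu_k^{-2\beta})$ and $C_{\nu_k}(u_{k-1})^2\le C_J c_{\nu_k}$ via \eqref{eq:uniform-initial-gaps}, insert these into \eqref{eq:inner_complexity} and sum the geometric series for (i), and for (ii) combine $\log(1+t)\ge t/(1+t)$ with an $O(k)$ bound on the logarithmic numerator before summing. Your extra bookkeeping (absorbing the additive $+1$ terms, and noting that $\varepsilon_k\ge\varepsilon_1\theta_\varepsilon^{k-1}$ under the $\max$ in \cref{alg:IP}) is harmless and only spells out what the paper absorbs into its $O$-constants.
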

\begin{proof}
By \eqref{eq:K-inner}, \eqref{eq:uniform-inner-family} and \eqref{eq:uniform-initial-gaps}, we have
\[
c_{\nu_k}=O\bigl(1+\Lsmooth(\nu_k)^2\bigr)=O(\nu_k^{-2\beta}),
\qquad C_{\nu_k}(u_{k-1})^2\le C_J c_{\nu_k}.
\]
For part~\ref{cor:overall_complexity_unified:general}, \eqref{eq:inner_complexity} therefore gives
\[
\Niterin(\nu_k,\varepsilon_k)
=O\!\left((\mmax+1)\nu_k^{-2\beta}\varepsilon_k^{-2}\right).
\]
Summing over $k=1,\ldots,N$ yields the geometric series in \eqref{eq:total-general}.

For part~\ref{cor:overall_complexity_unified:stronglyconvex}, using $\log(1+t)\ge t/(1+t)$ for $t>0$, we obtain
\[
\frac{1}{|\log\sigma_{\nu_k}|}
=\frac{1}{\log(1+2\kappa/c_{\nu_k})}
\le 1+\frac{c_{\nu_k}}{2\kappa}
=O(\nu_k^{-2\beta}).
\]
Moreover, the preceding bound on $C_{\nu_k}(u_{k-1})$ gives
\[
\log_+\!\left(\frac{C_{\nu_k}(u_{k-1})}{\varepsilon_k}\right)
\le C+\beta\log_+(1/\nu_k)+\log_+(1/\varepsilon_k),
\]
where $\log_+(x) =\max\{ \log (x),0 \}$ and  $C$ is independent of $k$.
Consequently, \eqref{eq:inner_complexity_strong_convexity} yields
\[
\Niterin(\nu_k,\varepsilon_k)
=O\!\left((\mmax+1)\nu_k^{-2\beta}
[1+\log_+(1/\nu_k)+\log_+(1/\varepsilon_k)]\right).
\]
Along the fixed geometric schedules, the bracket is
$O(1+\log_+(1/\varepsilon_k))=O(k)$.
Thus the dependence of $C_{\nu_k}(u_{k-1})$ on $\nu_k$ does not change the resulting order, although this constant need not be uniformly bounded.
Since $\sum_{j=0}^{N-1}(j+1)\rho^j=O(N\rho^{N-1})$ for fixed $\rho>1$, \eqref{eq:total-strong} follows.
\end{proof}

With the memory convention \eqref{eq:ls_memory}, $\mmax=0$ is the monotone proximal-gradient method.

\begin{remark}[Log-like kernel]\label{rem:loglike}
    Under the same assumptions used for the logarithmic kernel, the estimates below show that the proofs of \cref{thm:outer-convergence} and \cref{cor:overall_complexity_unified} extend to the log-like kernel \(\phi_{\rm ll}(t)=\log((t-1)/t)\), \(t<0\).
    Indeed, setting \(s=-t>0\), we have
    \(\phi_{\rm ll}'(-s)=1/[s(1+s)]\le 1/s\) and
    \(\phi_{\rm ll}''(-s)=(1+2s)/[s^2(1+s)^2]\le 1/s^2\).
    Moreover,
    \(r_k=\nu_k\int_\Xi (1+s_k)^{-1}\,\mathrm d\meas
    \le \nu_k\meas(\Xi)\).
    Since this kernel is nonnegative, convex, and increasing, the multiplier analysis uses the same complementarity upper bound as for the logarithmic kernel.
\end{remark}

\section{Verification for state-constrained semilinear elliptic problems}\label{sec:pde_verification}

Let $\Omega\subset\R^d$, $d\leq 3$, be a bounded domain with
$C^{1,1}$ boundary or, if \(d=2\), a bounded convex polygon, and let
$\mathbf n$ denote the outer unit normal.
For a given control $u\in U$, we consider the affine semilinear
Neumann problem
\begin{equation}\label{eq:state_affine_Bf_Neu}
    Ay+N(\cdot,y)=Bu+f
    \quad\text{in }\Omega,
    \qquad
    \left.\partial_{\mathbf n} y\right|_{\partial\Omega}=0,
\end{equation}
where $U$ is a Hilbert space continuously embedded into
$L^2(\Omega)$, the operator
$B\in\mathcal L(U,L^2(\Omega))$ is bounded and linear, and
$f\in L^2(\Omega)$ is fixed. The linear elliptic operator $A$ is
defined by
$Ay\coloneqq-\Delta y+cy$,
with the coefficient $c$ specified in the proposition below. In this proposition, we verify the properties of the control-to-state map $\Sctl:u\mapsto y$ associated with the semilinear Neumann problem \eqref{eq:state_affine_Bf_Neu} that are required in the abstract theory: existence and uniqueness of the state, Lipschitz stability, Fr\'echet differentiability, weak-to-strong continuity, and collective compactness of the adjoints of its derivatives.
The proof is provided in \hyperref[sec:omitted_proofs]{Appendix~\ref{sec:omitted_proofs}}.

\begin{proposition}\label{prop:unified_compact_stability_affine_Bf_Neumann}
Assume the following.
\begin{enumerate}[label=\textnormal{(E)},ref=\textnormal{(E)}]
    \item\label{ass:E}
    The coefficient $c\in L^\infty(\Omega)$ satisfies $c\ge c_0>0$ a.e., and $f\in L^2(\Omega)$.
\end{enumerate}
\begin{enumerate}[label=\textnormal{(H\arabic*)},ref=\textnormal{(H\arabic*)}]
    \item\label{ass:H1}
    $N(\cdot,s)$ is measurable for every $s\in\R$, and $s\mapsto N(x,s)$ is $C^1$ for a.e. $x\in\Omega$.
    We write $N_y\coloneqq\partial_sN$.

    \item\label{ass:H2}
    For every $M>0$ there exist constants $L_M,\ell_M<\infty$ such that, for a.e. $x\in\Omega$ and all $|s|,|t|\le M$, we have 
    \[
       |N_y(x,s)|\le L_M,
       \qquad
       |N_y(x,s)-N_y(x,t)|\le\ell_M|s-t|.
    \]

    \item\label{ass:H3}
    There exists $\kappa\in[0,c_0)$ such that, for a.e. $x\in\Omega$ and all $s,t\in\R$, we have 
    \[
       \bigl(N(x,s)-N(x,t)\bigr)(s-t)
       \ge-\kappa|s-t|^2.
    \]

    \item\label{ass:H4}
    There exist $\eta\in L^2(\Omega)$, $b_1,b_2\ge0$, and an exponent $1\le p<\infty$ if $d\le2$ (respectively $1\le p\le3$ if $d=3$) such that it holds
    \[
       |N(x,s)|\le\eta(x)+b_1|s|+b_2|s|^p
       \qquad\text{for a.e. }x\in\Omega\text{ and every }s\in\R.
    \]
\end{enumerate}
Then $\Sctl:U\to H^1(\Omega)\cap C(\overline\Omega)$ has the following properties.
\begin{enumerate}[label={(\roman*)}]

\item\label{stAffE:I}
For every $u\in U$, problem \eqref{eq:state_affine_Bf_Neu} has a
unique weak solution
\[
    y=\Sctl(u)\in H^2(\Omega)
    \subset H^1(\Omega)\cap C(\overline\Omega).
\]
Moreover, there exists a constant $C_S>0$, independent of $u$, such
that
\begin{equation}\label{eq:esimate_soltution_semilinear}
    \|\Sctl(u)\|_{H^1(\Omega)}
    +\|\Sctl(u)\|_{C(\overline\Omega)}
    \leq
    C_S\bigl(1+\|u\|_U\bigr)^q,
    \qquad q=\max\{1,p\}.
\end{equation}

   \item\label{stAffE:II}
   For every bounded set $U_b\subset U$, there exists
   $L_S(U_b)>0$ such that
\[
    \|\Sctl(u_1)-\Sctl(u_2)\|_{H^1(\Omega)}
    +
    \|\Sctl(u_1)-\Sctl(u_2)\|_{C(\overline\Omega)}
    \leq
    L_S(U_b)\|u_1-u_2\|_U
\]
for all $u_1,u_2\in U_b$.

    \item\label{stAffE:III}
    The map $\Sctl:U\to H^1(\Omega)\cap C(\overline\Omega)$ is Fr\'echet differentiable.
    For $u,h\in U$, $z=\Sctl'(u)h$ is the unique weak solution of
    \begin{equation}\label{eq:lin_neu}
    \begin{aligned}
       Az+N_y(\cdot,\Sctl(u))z&=Bh &&\text{in }\Omega, \qquad
    \left.\partial_{\mathbf n} z\right|_{\partial\Omega}=0.
    \end{aligned}
    \end{equation}

    \item\label{stAffE:IV}
    For every bounded set $U_b\subset U$, there exist $M_{S'}(U_b),L_{S'}(U_b)>0$ such that
    \[
       \|\Sctl'(u)\|_{\mathcal L(U,H^1(\Omega)\cap C(\overline\Omega))}
       \le M_{S'}(U_b)
       \qquad\forall u\in U_b,
    \]
    and
    \[
       \|\Sctl'(u_1)-\Sctl'(u_2)\|_{\mathcal L(U,H^1(\Omega)\cap C(\overline\Omega))}
       \le L_{S'}(U_b)\|u_1-u_2\|_U
       \qquad\forall u_1,u_2\in U_b.
    \]

    \item\label{stAffE:V}
    If $u_k\rightharpoonup u$ in $U$, then we have 
    \[
       \Sctl(u_k)\to\Sctl(u)
       \quad\text{strongly in }H^1(\Omega)\cap C(\overline\Omega),
    \]
    and, for every $h\in U$, it holds
    \[
       \Sctl'(u_k)h\to\Sctl'(u)h
       \quad\text{strongly in }H^1(\Omega)\cap C(\overline\Omega).
    \]

    \item\label{stAffE:VI}
    If $u_k\rightharpoonup u$ in $U$, then, for every $\mu\in\mathcal M(\overline\Omega)$, we have 
    \begin{equation}\label{eq:adjoint_pointwise_conv}
       \Sctl'(u_k)^*\mu\to\Sctl'(u)^*\mu
       \quad\text{strongly in }U.
    \end{equation}
    Moreover, if $(u_k)$ is bounded in $U$ and $\barrierfun\subset\mathcal M(\overline\Omega)$ is bounded in total variation, then
    \[
       \mathcal K(\barrierfun)
       \coloneqq
       \{\,\Sctl'(u_k)^*\mu:\ k\in\N,\ \mu\in\barrierfun\,\}
    \]
    is relatively compact in $U$.
\end{enumerate}
\end{proposition}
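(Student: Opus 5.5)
The plan is to follow the standard route for semilinear elliptic problems: first a truncation/monotone-operator argument together with $L^\infty$ and $H^2$ regularity for the state equation, then the implicit function theorem for differentiability, and finally compactness of the embedding $H^2(\Omega)\hookrightarrow C(\overline\Omega)$ (valid for $d\le 3$) for the weak-to-strong statements.

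\textbf{Step 1: existence and the bound \eqref{eq:esimate_soltution_semilinear}.} Write the equation as $-\Delta y+cy+N(\cdot,y)=g$ with $g\coloneqq Bu+f\in L^2(\Omega)$.
\begin{itemize}
\item By \ref{ass:H3}, the operator $y\mapsto Ay+N(\cdot,y)$ is strongly monotone on $H^1(\Omega)$ with modulus $\min\{1,c_0-\kappa\}$.
\item The growth condition \ref{ass:H4} and the Sobolev embedding $H^1\hookrightarrow L^6$ for $d\le3$ make $N(\cdot,y)\in L^{6/5}\subset (H^1)^*$ well defined and hemicontinuous when $p\le 3$ (any $p$ if $d\le 2$).
\item Browder--Minty then gives a unique weak solution. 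Testing with $y$ gives $\|y\|_{H^1}\le C(\|g\|_{L^2}+\|N(\cdot,0)\|_{L^2})$, where $N(\cdot,0)$ is bounded by $\eta$.
\end{itemize}
For the $C(\overline\Omega)$ bound I would move $N$ to the right-hand side. Then $N(\cdot,y)\in L^2$ with $\|N(\cdot,y)\|_{L^2}\le C(1+\|y\|_{H^1}^p)$, using $H^1\hookrightarrow L^{2p}$ for $p\le3$. Neumann $H^2$-regularity on $C^{1,1}$ or convex polygonal domains then gives $\|y\|_{H^2}\le C(1+\|u\|_U)^{\max\{1,p\}}$, and $H^2\hookrightarrow C(\overline\Omega)$ yields \eqref{eq:esimate_soltution_semilinear}. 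An alternative route to an $L^\infty$ bound is Stampacchia truncation.

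\textbf{Step 2: Lipschitz stability \ref{stAffE:II}.} Subtract the equations for $u_1,u_2$ and test with $y_1-y_2$. Using \ref{ass:H3} gives the $H^1$ estimate. Since both states are bounded in $C(\overline\Omega)$ by some $M$, the mean value theorem and \ref{ass:H2} give $N(\cdot,y_1)-N(\cdot,y_2)=a\,(y_1-y_2)$ with $|a|\le L_M$. Hence the difference solves a linear problem with $L^2$ data, and $H^2$-regularity gives the $C(\overline\Omega)$ part.

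\textbf{Step 3: differentiability \ref{stAffE:III}--\ref{stAffE:IV}.}
\begin{itemize}
\item Work in $Y\coloneqq\{y\in H^2:\partial_{\mathbf n}y=0\}$ and consider $F(y,u)=Ay+N(\cdot,y)-Bu-f\in L^2$.
\item By \ref{ass:H1}--\ref{ass:H2} and $Y\hookrightarrow C(\overline\Omega)$, the Nemytskii map $y\mapsto N(\cdot,y)$ is $C^1$ from $C(\overline\Omega)$ to $L^\infty$, with locally Lipschitz derivative.
\item The partial derivative $\partial_yF=A+N_y(\cdot,y)$ is an isomorphism $Y\to L^2$. Injectivity follows from \ref{ass:H3}, which gives $N_y\ge-\kappa$, so the linearized operator is coercive; surjectivity follows from Lax--Milgram plus regularity.
\item The implicit function theorem then gives \eqref{eq:lin_neu}.
\end{itemize}
The bounds in \ref{stAffE:IV} follow by applying the a priori estimate to \eqref{eq:lin_neu}. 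For the Lipschitz bound, take the difference of the linearized equations for $u_1,u_2$, whose right-hand side is $(N_y(\cdot,y_2)-N_y(\cdot,y_1))z_2$, and use $\ell_M$ together with \ref{stAffE:II}.

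\textbf{Step 4: weak-to-strong continuity \ref{stAffE:V}.} If $u_k\rightharpoonup u$, then $\Sctl(u_k)$ is bounded in $H^2$ by Step 1. Compactness of $H^2\hookrightarrow H^1\cap C(\overline\Omega)$ gives a strongly convergent subsequence, and its limit solves the equation for $u$ because $Bu_k\rightharpoonup Bu$ in $L^2$ and $N$ is continuous in the uniform topology. Uniqueness of the state upgrades this to convergence of the whole sequence. The same argument applies to $z_k=\Sctl'(u_k)h$: its data $Bh$ is fixed, the coefficients $N_y(\cdot,y_k)\to N_y(\cdot,y)$ in $L^\infty$, and the $H^2$ bounds are uniform.

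\textbf{Step 5: the adjoint statement \ref{stAffE:VI}, expected to be the main obstacle.} The difficulty is that $\mu$ is only a measure, so the adjoint state $p_k$ solving $Ap_k+N_y(\cdot,y_k)p_k=\mu$ (Neumann) lies only in $W^{1,s}(\Omega)$ for $s<d/(d-1)$, by Stampacchia/Casas duality. It is not in $H^1$.
\begin{itemize}
\item I would write $\Sctl'(u_k)^*\mu=B^*p_k$, understood via transposition: $\langle B^*p_k,h\rangle_U=\langle\mu,\Sctl'(u_k)h\rangle$.
\item The key estimate is $\|p_k\|_{L^2}\le C\|\mu\|_{\mathcal M}$, uniformly in $k$. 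This comes from duality with the uniform bound $\|z\|_{C(\overline\Omega)}\le C\|g\|_{L^2}$ for the linearized problem with $L^2$ data $g$; the constant is uniform because the coefficients $N_y(\cdot,y_k)$ are bounded by $L_M$ and bounded below by $-\kappa$.
\item For compactness, the same duality applied with the compact embedding $H^2\hookrightarrow C(\overline\Omega)$ shows that the solution operator $L^2\to C(\overline\Omega)$ is compact. Its adjoint $\mathcal M\to L^2$ is therefore compact. To make this uniform in $k$, I would use the $W^{1,s}$ bound on $p_k$ together with the compact embedding $W^{1,s}\hookrightarrow L^2$ for $s$ close to $d/(d-1)$, with $d\le 3$. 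This puts $\{p_k\}$, for $\mu$ in a bounded set, in a relatively compact subset of $L^2$, and $B^*\in\mathcal L(L^2,U)$ preserves relative compactness.
\item For the pointwise convergence \eqref{eq:adjoint_pointwise_conv}, every limit of a subsequence of $p_k$ satisfies the limit adjoint equation. This follows by testing against $\Sctl'(u)h$ and using \ref{stAffE:V}, or by passing to the limit in the very weak formulation. Uniqueness of the limit then gives convergence of the whole sequence.
\end{itemize}
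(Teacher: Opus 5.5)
Your proposal is correct. For (i), (ii), (iv) and the compactness part of (vi) it follows the paper's argument:
\begin{itemize}
\item Minty--Browder on $H^1(\Omega)$ together with Neumann $H^2$-regularity;
\item the mean-value coefficient bounded by $L_M$;
\item a $k$-uniform Stampacchia bound in $W^{1,s}(\Omega)$, $2d/(d+2)<s<d/(d-1)$, combined with the compact embedding into $L^2(\Omega)$.
\end{itemize}

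The genuine difference is in (iii). The paper avoids the implicit function theorem. It sets $r_v=\Sctl(u+v)-\Sctl(u)-z_v$ and observes that $Ar_v+N_y(\cdot,y)r_v=-\rho_v$, where $\rho_v=\int_0^1[N_y(\cdot,y+t\delta y)-N_y(\cdot,y)]\delta y\,dt$. It then bounds $\|\rho_v\|_2\le\tfrac{\ell_M}{2}\|\delta y\|_\infty\|\delta y\|_2\le C\|v\|_U^2$ using only (ii). This needs no space with a Neumann trace constraint and no identification step, and it gives an explicit quadratic remainder.

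Your implicit-function-theorem route gives continuity of $u\mapsto\Sctl'(u)$ at once. In exchange, you must verify that $A+N_y(\cdot,y)$ is an isomorphism $Y\to L^2$ (again Lax--Milgram plus $H^2$-regularity). You must also identify the local IFT branch with $\Sctl$ through the global uniqueness in (i).

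You should also correct one slip. Under (H4), $N(\cdot,0)$ is only dominated by $\eta\in L^2(\Omega)$, so $y\mapsto N(\cdot,y)$ does not map $C(\overline\Omega)$ into $L^\infty$. Only $N(\cdot,y)-N(\cdot,0)$ lies in $L^\infty$. The map $y\mapsto N(\cdot,y)$ is $C^1$ from $C(\overline\Omega)$ into $L^2$, which is all that $F\colon Y\times U\to L^2$ needs.

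The other differences are cosmetic. In (v) you obtain convergence of $\Sctl'(u_k)h$ by compactness plus uniqueness, where the paper estimates the difference equation directly. In (vi), defining $p_k$ by transposition makes $\Sctl'(u_k)^*\mu=B^*p_k$ and $\|p_k\|_2\le C\|\mu\|_{\mathcal M(\overline\Omega)}$ immediate. The paper instead has to invoke the duality between the Stampacchia solution and $z_k$.

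As an aside, your remark that the adjoint solution operator is compact can be made uniform in $k$ without measure-data theory.
\begin{itemize}
\item Since $\{\Sctl(u_k)\}$ is bounded in $H^2(\Omega)$, the coefficients $q_k=c+N_y(\cdot,\Sctl(u_k))\ge c_0-\kappa$ are relatively compact in $L^\infty$ by (H2).
\item The solution operators $G(q)\colon L^2\to C(\overline\Omega)$ of the linearized problem satisfy $G(q_1)-G(q_2)=G(q_1)(q_2-q_1)G(q_2)$. Hence they depend Lipschitz-continuously on $q$ in operator norm.
\item Therefore $\{G(q_k)^*\}$ is a norm-totally bounded family of compact operators, and hence collectively compact.
\end{itemize}
For $u_k\rightharpoonup u$ this even gives $\Sctl'(u_k)^*\to\Sctl'(u)^*$ in operator norm. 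That is stronger than the convergence for each fixed $\mu$ claimed in (vi).
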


\begin{remark}\label{rem:cubic_class}
    Assume that
    \[
        N(x,s)=a(x)|s|^{p-1}s+b(x)s+g(x),
    \]
    where \(p\in\N\), \(a,b\in L^\infty(\Omega)\), and
    \(g\in L^2(\Omega)\); the associated derivative is
    \(N_y(x,s)=p\,a(x)|s|^{p-1}+b(x)\). If, in addition, $a(x)\geq0$ for a.e. $x\in\Omega$ and
    $\kappa\coloneqq\|b^-\|_{L^\infty(\Omega)}<c_0$, where
    $b^-(x)\coloneqq\max\{-b(x),0\}$ denotes the negative part of $b$,
    then \ref{ass:H1}--\ref{ass:H3} are satisfied. The growth condition
    \ref{ass:H4} holds whenever
    \(H^1(\Omega)\hookrightarrow L^{2p}(\Omega)\). Consequently,
    \(p\in\{1,2,3\}\) is admissible for \(d=3\), whereas every finite
    \(p\in\N\) is admissible for \(d\leq2\).
\end{remark}

\begin{remark}
    Assume that the homogeneous Neumann condition in
    \eqref{eq:state_affine_Bf_Neu} is replaced by the homogeneous
    Dirichlet condition
    \(\left.y\right|_{\partial\Omega}=0\). Then all conclusions of
    \cref{prop:unified_compact_stability_affine_Bf_Neumann} remain valid,
    with \(H_0^1(\Omega)\) as the corresponding energy space. In this setting, the condition \(c\geq c_0>0\) may be relaxed to
    \(c\geq c_0\geq0\), and it is sufficient to assume
    \(\kappa\leq c_0\). Indeed, with $w=y_1-y_2\in H_0^1(\Omega)$, we have
    \[
     \langle T(y_1)-T(y_2),w\rangle
     \ge
     \|\nabla w\|_2^2+(c_0-\kappa)\|w\|_2^2
     \ge
     \|\nabla w\|_2^2.
    \]
    By Poincaré's inequality, the right-hand side is coercive with respect
    to the full \(H^1(\Omega)\)-norm. The preceding arguments therefore
    remain valid using the corresponding global Dirichlet
    \(H^2\)-regularity estimates.
\end{remark}

With \cref{prop:unified_compact_stability_affine_Bf_Neumann} in hand, we verify the following regularity and compactness assumptions for the tracking-only reduced functional in \eqref{eq:reduced_definitions}

\begin{lemma}\label{lem:Applicability}
    Let $Z \coloneqq C(\overline{\Omega})$ and let $U$, $\Hsobs$ be Hilbert spaces with $U\subseteq L^2(\Omega)$ and $Z\subseteq \Hsobs$.
    Assume the hypotheses of \cref{prop:unified_compact_stability_affine_Bf_Neumann}.
    Fix $y_d\in \Hsobs$, $\bar y\in Z$, and bounded linear operators $C \colon Z\to \Hsobs$ and $S \colon C(\overline{\Omega})\to C(\overline{\Omega})$.
    Let $\Sctl:U\to Z$ be the control-to-state map associated with \eqref{eq:state_affine_Bf_Neu} and define $\costr$ and $H$ as in \eqref{eq:reduced_definitions}.
    Then Assumptions~\ref{G2}, \ref{E2}, \ref{G3}, \ref{Stability:A2}, \ref{Stability:A3}, and \ref{barrier-to-KKT:A5} hold.
\end{lemma}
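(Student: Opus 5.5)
The plan is to read off each assumption from the corresponding item of \cref{prop:unified_compact_stability_affine_Bf_Neumann}, combined with the chain rule for $\costr(u)=\tfrac12\|C\Sctl(u)-y_d\|_{\Hsobs}^2$ and $H(u)=S\Sctl(u)-\bar y$. Since $C$ and $S$ are bounded linear, they preserve differentiability, Lipschitz bounds and strong convergence. The only inputs used are items \ref{stAffE:III}--\ref{stAffE:VI} and the continuous embedding $H^1(\Omega)\cap C(\overline\Omega)\hookrightarrow Z$.

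First, for \ref{G2} we take $\mathcal D=U$. By item \ref{stAffE:III}, $\Sctl:U\to Z$ is Fr\'echet differentiable, and item \ref{stAffE:IV} shows that $u\mapsto\Sctl'(u)$ is locally Lipschitz, hence continuous, into $\mathcal L(U,Z)$. Thus $H'(u)=S\Sctl'(u)$ is continuous. For the smooth part we have $\nabla\costr(u)=\Sctl'(u)^*C^*(C\Sctl(u)-y_d)$, where $\Sctl'(u)^*$ is the Hilbert adjoint obtained through $Z\subseteq\Hsobs$ (or via $Z\hookrightarrow L^2\supseteq U$), and this is continuous too.

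Next, for \ref{G3} on a bounded $U_b$:
\begin{itemize}
\item $H'$ is Lipschitz with constant $\|S\|L_{S'}(U_b)$, and $\sup_{u\in U_b}\|H'(u)\|\le\|S\|M_{S'}(U_b)$.
\item For $\nabla\costr$, we split the difference of products, $\Sctl'(u_1)^*w_1-\Sctl'(u_2)^*w_2$ with $w_i=C^*(C\Sctl(u_i)-y_d)$, in the usual way. We then use $\|\Sctl'(u)^*\|=\|\Sctl'(u)\|$, the Lipschitz bounds of item \ref{stAffE:IV}, item \ref{stAffE:II}, and the bound \eqref{eq:esimate_soltution_semilinear}, which keeps $\|w_i\|$ bounded on $U_b$.
\end{itemize}

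For the weak-to-strong properties we argue as follows:
\begin{itemize}
\item \ref{E2} and \ref{Stability:A2} follow from item \ref{stAffE:V}. If $u_k\rightharpoonup u$, then $\Sctl(u_k)\to\Sctl(u)$ in $C(\overline\Omega)$, so $H(u_k)\to H(u)$ and $H'(u_k)v\to H'(u)v$ in $Z$. Moreover $\costr(u_k)\to\costr(u)$, i.e.\ $\costr$ is even weakly sequentially continuous.
\item For \ref{Stability:A3} we write
\[
\nabla\costr(u_k)-\nabla\costr(u)=\Sctl'(u_k)^*(w_k-w)+(\Sctl'(u_k)^*-\Sctl'(u)^*)w .
\]
The first term tends to zero because $\|\Sctl'(u_k)\|$ is bounded; weakly convergent sequences are bounded, so item \ref{stAffE:IV} applies, and $w_k\to w$ by item \ref{stAffE:V}. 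For the second term we note that $w=C^*(\cdots)\in\Hsobs$, and we regard it as an element of $Z^*$ via $Z\subseteq\Hsobs\cong\Hsobs^*\to Z^*$, i.e.\ as a measure. Item \ref{stAffE:VI}, equation \eqref{eq:adjoint_pointwise_conv}, then gives strong convergence.
\end{itemize}

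Finally, \ref{barrier-to-KKT:A5} follows from the second part of item \ref{stAffE:VI} together with $H'(u_k)^*=\Sctl'(u_k)^*S^*$. A bounded set $B\subset Z^*=\mathcal M(\overline\Omega)$ is mapped by $S^*$ to a set bounded in total variation, so $\mathcal K(B)\subseteq\mathcal K(S^*B)$ is relatively compact, provided $\{u_k\}$ is bounded. This holds in the setting where \ref{barrier-to-KKT:A5} is used (\ref{Stability:A1}).

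The main obstacle I expect is bookkeeping of the adjoints. The identification of $\Sctl'(u)^*$ acting on $\Hsobs$-valued versus measure-valued arguments must be made consistent, so that \eqref{eq:adjoint_pointwise_conv} legitimately covers the term $(\Sctl'(u_k)^*-\Sctl'(u)^*)w$ in \ref{Stability:A3}. If that identification is awkward, I would instead use compactness. The family $\{\Sctl'(u_k)^*\}$ maps the bounded set $\{w\}$ into a relatively compact set, and $\Sctl'(u_k)^*w\rightharpoonup\Sctl'(u)^*w$ weakly by item \ref{stAffE:V}. Combining the two gives strong convergence, which is the same argument as in the proof of \cref{prop:barrier-to-KKT}.
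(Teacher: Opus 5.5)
Your proposal is correct and follows the paper's proof essentially step by step. It uses the same chain-rule formula $\nabla\costr(u)=\Sctl'(u)^*C^*(C\Sctl(u)-y_d)$, the same product splitting for the local Lipschitz bound in \ref{G3}, the same two-term decomposition for \ref{Stability:A3}, and the same factorization $H'(u_k)^*=\Sctl'(u_k)^*S^*$ for \ref{barrier-to-KKT:A5}; your remark that boundedness of $\{u_k\}$ must be supplied there, e.g.\ by \ref{Stability:A1}, is slightly more careful than the paper. The adjoint bookkeeping you worry about is handled exactly as in the paper: since $C\colon Z\to\Hsobs$, the adjoint $C^*$ already maps $\Hsobs\cong\Hsobs^*$ into $Z^*=\mathcal M(\overline\Omega)$, so $C^*(C\Sctl(u)-y_d)$ is a measure and \eqref{eq:adjoint_pointwise_conv} applies to it directly, without the compactness fallback.
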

\begin{proof}
    All claims follow from \cref{prop:unified_compact_stability_affine_Bf_Neumann}.
    In particular, \ref{G2} is a consequence of \ref{stAffE:III}, while \ref{G3} (local Lipschitz continuity of $H'$ and uniform boundedness of $\|H'(u)\|_{\mathcal L(U,Z)}$ on bounded sets) follows from \ref{stAffE:IV}.
    By the chain rule, $\costr$ is Fr\'echet differentiable with
    \[
    \nabla \costr(u)=\Sctl'(u)^*\,r(u),
    \qquad
    r(u)\coloneqq C^*(C\Sctl(u)-y_d).
    \]
    Hence, for $u_1,u_2\in U$, we can write
    \[
    \nabla \costr(u_1)-\nabla \costr(u_2)
    =\bigl(\Sctl'(u_1)^*-\Sctl'(u_2)^*\bigr)r(u_1)
    +\Sctl'(u_2)^*\bigl(r(u_1)-r(u_2)\bigr).
    \]
    Using \ref{stAffE:II} and \ref{stAffE:IV}, we obtain on bounded sets $U_b\subset U$ the estimate
    \[
    \|\nabla \costr(u_1)-\nabla \costr(u_2)\|_{U}
    \le \Bigl(L_{S'}(U_b)\,R(U_b)+M_{S'}(U_b)\,\|C^{*}\|_{\mathcal{L}(\Hsobs,Z^*)}\|C\|_{\mathcal{L}(Z,\Hsobs)}\,L_S(U_b)\Bigr)\,\|u_1-u_2\|_{\Us},
    \]
    where $R(U_b) \coloneqq  \|C^{*}\|_{\mathcal{L}(\Hsobs,Z^*)}\left(C_S\|C\|_{\mathcal{L}(Z,\Hsobs)}\bigl(1+\sup_{u \in \Us_b}\|u\|_U\bigr)^q+\|y_d\|_{\Hsobs}\right)$. This yields the required local Lipschitz continuity of $\nabla\costr$.
    
    Assumptions \ref{E2} and \ref{Stability:A2} follow from \ref{stAffE:V}.
    To verify \ref{Stability:A3}, let $u_k\rightharpoonup u$ in $U$.
    By \ref{stAffE:V}, $\Sctl(u_k)\to \Sctl(u)$ strongly in $Z$, and thus, by boundedness of $C^*:\Hsobs\to Z^*$, we infer that
    \[
    r(u_k)=C^*(C\Sctl(u_k)-y_d)\to r(u)\quad\text{strongly in }Z^*,
    \qquad
    \sup_k\|r(u_k)\|_{Z^*}<\infty.
    \]
    Moreover, \ref{stAffE:IV} yields $\sup_k\|\Sctl'(u_k)^*\|_{\mathcal L(Z^*,U^*)}<\infty$.
    Therefore, 
    \[
    \nabla\costr(u_k)-\nabla\costr(u)
    =\Sctl'(u_k)^*(r(u_k)-r(u))
    +\bigl(\Sctl'(u_k)^*-\Sctl'(u)^*\bigr)r(u)\to 0 \quad \text{in }U^*,
    \]
    where the first term converges by the strong convergence of $r(u_k)$ and the uniform bound, and the second by \eqref{eq:adjoint_pointwise_conv}.
    Hence \ref{Stability:A3} holds.
    
    It remains to show \ref{barrier-to-KKT:A5}. Since $H(u)=S\,\Sctl(u)-\bar y$ with $S$ linear, we have
    \[
    H'(u_k)^*=\Sctl'(u_k)^*\,S^* \quad\text{in }\mathcal L(Z^*,U),
    \]
    and $S^*:Z^*\to Z^*$ is bounded.
    Thus, for any set $B\subset Z^*=\mathcal M(\overline{\Omega})$ bounded in total variation, also $S^*(B)$ is bounded.
    By \ref{stAffE:VI},
    \[
    \bigl\{\, \Sctl'(u_k)^* s:\ k\in\N,\ s \in S^*(B)\,\bigr\}
    \]
    is relatively compact in $\Us$, which is precisely \ref{barrier-to-KKT:A5}.
\end{proof}

\section{Numerical experiments}\label{sec:numerics}

In this section, we illustrate the behavior of \cref{alg:IP} on two examples: a finite-dimensional data science task and a PDE-constrained control problem.
We compare the performance with the two theoretically analyzed kernels, namely the logarithmic $\phi(t)\coloneqq -\ln(-t)$ (\stringlog) and inverse $\phi(t)\coloneqq -1/t$ (\stringinverse),
and with the log-like kernel $\phi(t) \coloneqq \ln((t-1)/t)$ (\stringloglike) considered as an additional numerical variant.
The log-like barrier behaves like the logarithmic one near the boundary while remaining everywhere nonnegative like the inverse barrier; see \cite{demarchi2024interior} and \cite[Table 1]{demarchi2025penalty}.
For the infinite-dimensional example, the effect of different discretization meshes is also examined.
In \cref{alg:NMPG} we set parameters $\eta=2$ and $\delta=10^{-3}$.
The stepsize estimates follow the alternating Barzilai--Borwein update rule ABBa, as given in \cite[\S 2]{azmi2025nonmonotone}, and are then clipped to $[10^{-6},10^6]$.

\subsection{Sparse dictionary learning with cone-ordered constraints}\label{ex:dict-learning}

A finite-dimensional machine-learning problem covered by \eqref{eq:Reduced} is sparse dictionary learning with nonlinear side constraints \cite{elad2010sparse}.
Let $A = [a_1,\dots,a_N]\in\R^{m\times N}$ be a data matrix whose columns are training samples (e.g., vectorized grayscale images or image patches).
Given a number of atoms $\ell\in\N$, we seek a dictionary \(D\in\R^{m\times \ell}\) and a coefficient matrix \(C\in\R^{\ell\times N}\) such that \(A\approx D C\). We consider
\begin{equation}\label{eq:dict-learning-example}
    \minimize_{D\in\R^{m\times \ell},\,C\in\R^{\ell\times N}}{}\quad
    \frac{1}{2} \|A-DC\|_F^2 + \costp(D,C)
    \quad\stt\quad
    H(D,C) \leq_{\R_+^q} 0 ,
\end{equation}
where the inequality is understood componentwise.
A common choice for the regularizer $\costp$ is
\[
    \costp(D,C)
    \coloneqq
    \sum_{j=1}^\ell \indicator_{\unitsphere}(d_{\cdot,j}) + \lambda \sum_{j=1}^\ell \|c_{j,\cdot} \|_1 ,
\]
with parameter \(\lambda>0\).
The $\ell_1$-norm penalty $\|\cdot\|_1$ applied to the rows $c_{j,\cdot}$ of $C$ promotes sparsity of the coefficient matrix, while the indicator $\indicator_{\unitsphere}$ of the unit ball
$\unitsphere \coloneqq \{ v\in\R^m \,|\, \|v\| \leq 1 \}$ enforces a normalization constraint on the columns $d_{\cdot,j}$ of $D$ \cite[\S 6.2]{themelis2018forward}.
Then, the nonlinear least-squares term \(\costr\) in \eqref{eq:dict-learning-example} is smooth and nonconvex (because of the bilinear term \(D C\)), whereas \(\costp\) is proper, lsc, convex, and has a computable proximal mapping.

We require the atoms in $D$ to be sufficiently diversified, relative to a prescribed coherence level \(\mucoh>0\), by means of a cone-valued constraint map obtained by stacking coherence constraints of the form $| \langle d_{\cdot,i},d_{\cdot,j}\rangle | \leq \mucoh$, $1\le i<j\le \ell$.
Then, the smooth constraint function $H$ is defined by
\[
H(D,C)\coloneqq \bigl(\{ \langle d_{\cdot,i},d_{\cdot,j}\rangle^2-\mucoh^2 \}_{1\le i<j\le \ell}\bigr)\in\R^q,
\qquad
q= \ell (\ell-1)/2.
\]
Thus, \eqref{eq:dict-learning-example} is a concrete nonconvex nonsmooth optimization problem with cone-ordered constraints in the finite-dimensional setting \(Z=\R^q\).

\paragraph*{Setup}
We sampled the \emph{Database of Faces} to generate a data matrix $A$,
randomly selecting two images for each subject, for a total of $N=80$ images.
These images were subsampled to reduce their size from $10304$ ($92\times 112$) to $m = 1080$ ($30\times 36$) pixels each.
We set the parameters $\ell=10$, $\mucoh = \nicefrac{1}{2}$, $\lambda = \nicefrac{1}{10}$ and obtained an instance of \eqref{eq:dict-learning-example} with $n=\ell(m+N)=11600$ variables and $q=45$ constraints.

\cref{alg:IP} is given the tolerance $\varepsilon \coloneqq 1$ and a strictly feasible initial guess, with $D=0_{m\times \ell}$ and $C\in\R^{\ell\times N}$ randomly sampled from a normal distribution.
The barrier parameter is initialized at $\nu_1\coloneqq 10^2$ with factor $\theta_\nu \coloneqq \nicefrac{9}{10}$.
The inner tolerance is set to $\varepsilon_k \coloneqq \max\{ \varepsilon, \nu_k\}$ for all $k\in\N$.

\paragraph*{Results}
The archived experiments report successful termination with each barrier function.
\cref{fig:faces_results} reports the total objective, stationarity measure and complementarity along the iterations.
\cref{fig:faces_dictionary} shows the data images along with a reconstruction and the corresponding dictionary.
In terms of efficiency, \cref{alg:IP} with logarithmic barrier requires 81 outer iterations and 48822 total inner iterations (wall-clock runtime of 91 seconds),
the inverse barrier 103 outer and 101408 inner iterations (178 seconds),
and the log-like barrier 80 outer and 45610 inner iterations (83 seconds).

On this instance, the logarithmic and log-like barriers require fewer inner iterations than the inverse barrier under the reported stopping criteria.
This observation is consistent with the corresponding behavior observed in \cite[\S 4.2]{demarchi2025penalty}, but it should be interpreted as an empirical observation for the tested instance rather than as a general performance guarantee.

\begin{figure}[tbh]
    \centering%
    \includegraphics[scale=0.75]{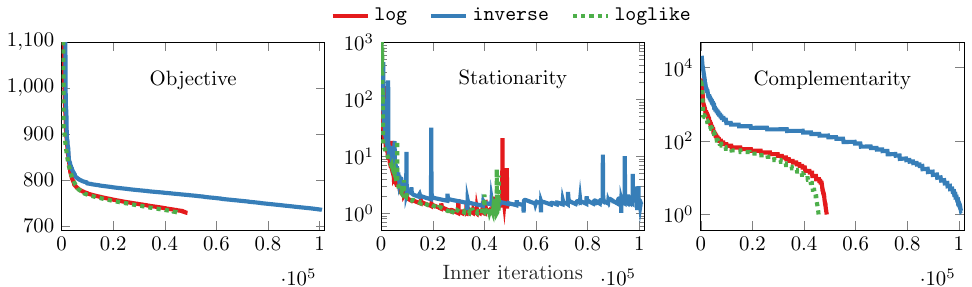}%
    \caption{Dictionary learning on faces: results in terms of objective (left), stationarity (middle), and complementarity (right) for different barrier functions (logarithmic, inverse, log-like).}%
    \label{fig:faces_results}%
\end{figure}

\begin{figure}[tbh]
    \centering%
    \includegraphics[height=0.18\textheight,width=0.47\linewidth]{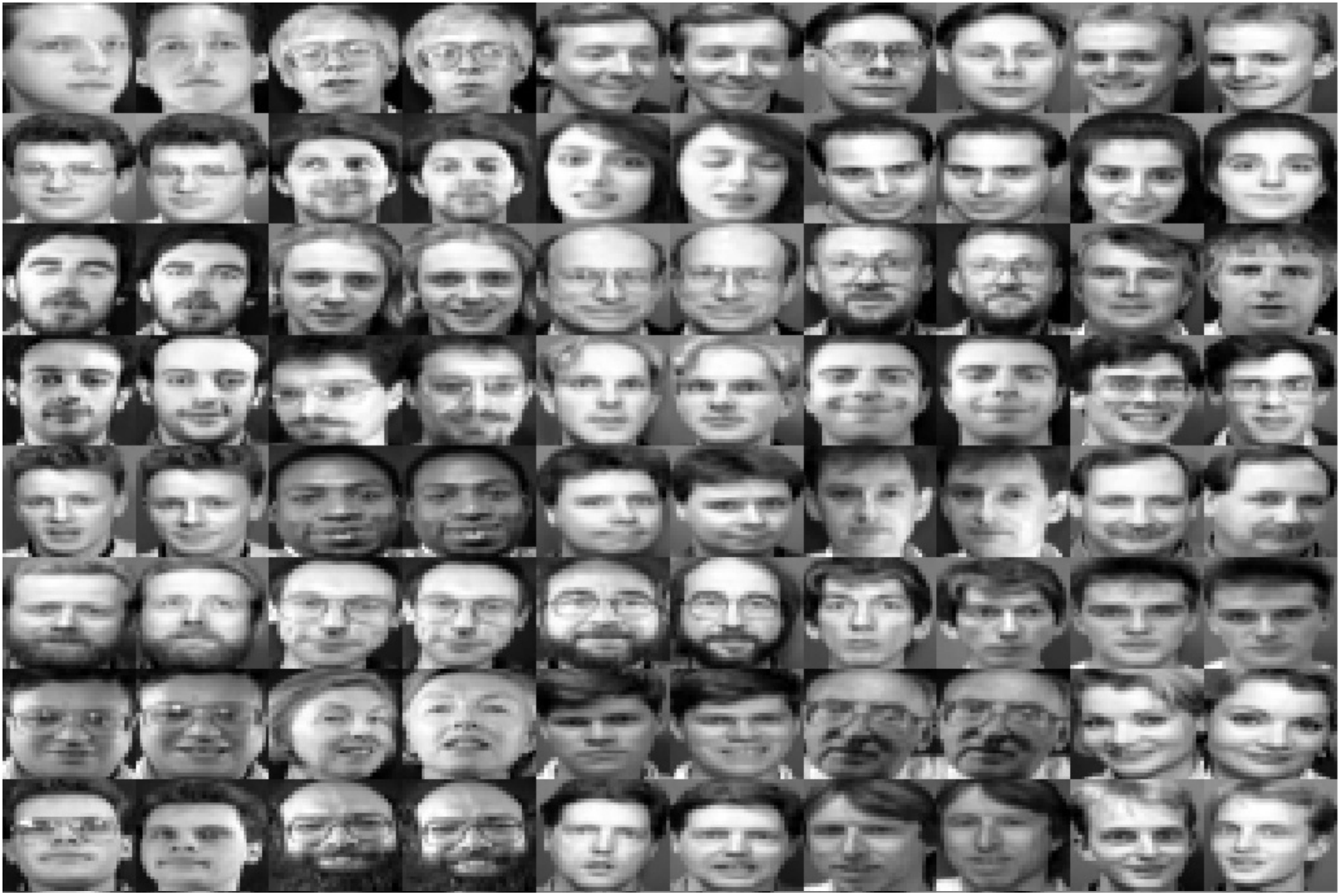}%
    \includegraphics[height=0.18\textheight,width=0.47\linewidth]{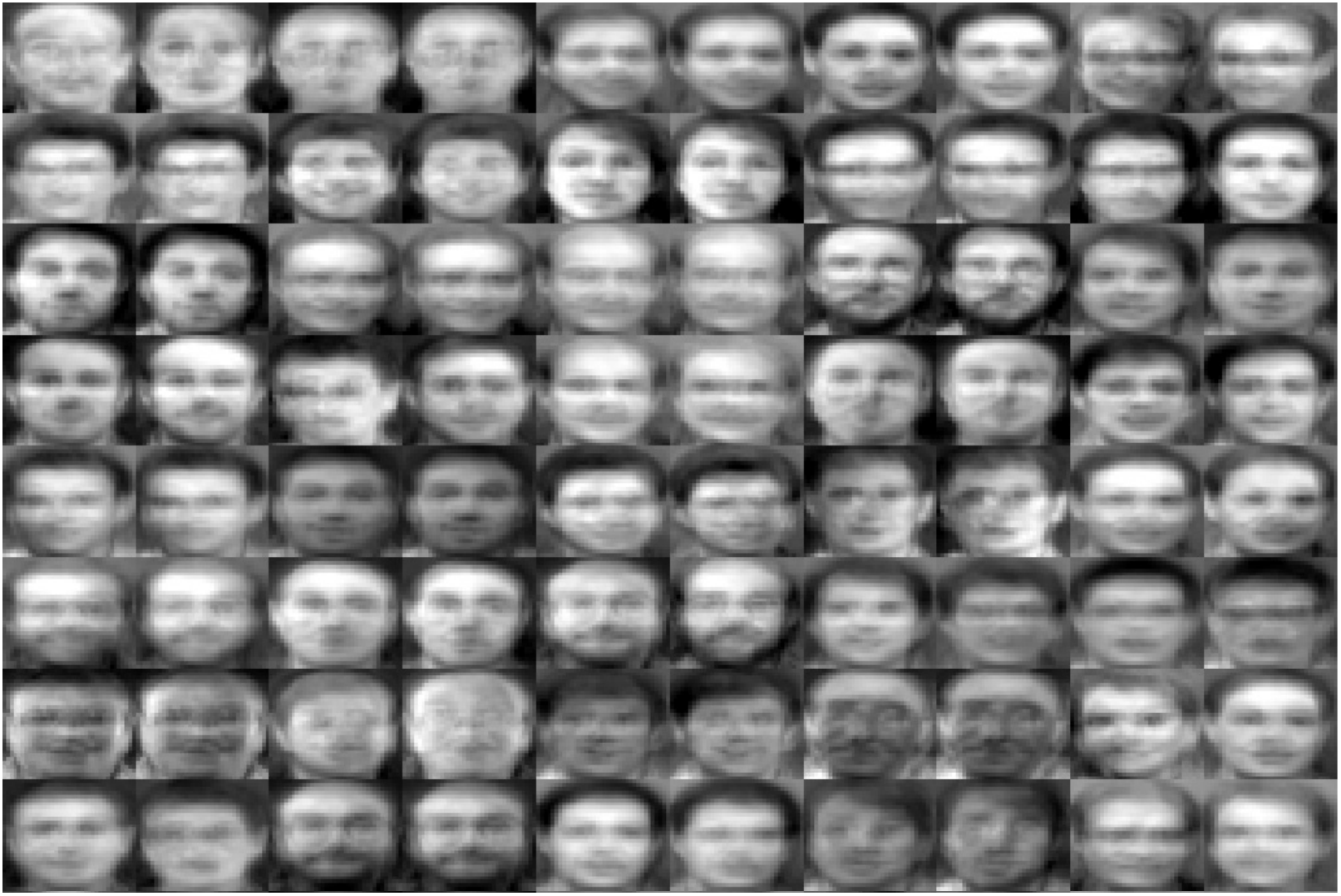}\\
    \includegraphics[width=0.47\linewidth]{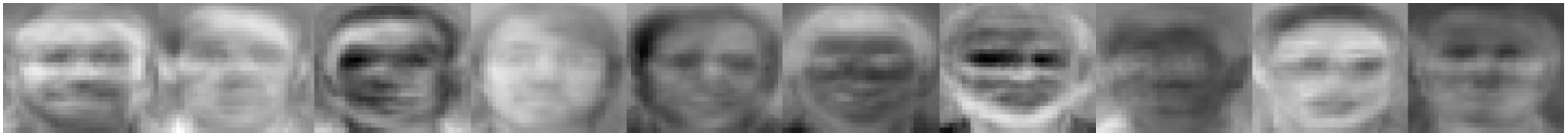}%
    \caption{Dictionary learning on faces: original images (top left) and reconstructed images (top right) with dictionary atoms (bottom) found using the logarithmic barrier.}%
    \label{fig:faces_dictionary}%
\end{figure}

\subsection{State-constrained control of an elliptic PDE}\label{ex:pde-control}

We consider the following semilinear elliptic optimal control problem
\begin{subequations}
    \label{eq:opt_problem_pde_e}
    \begin{align}
        \minimize_{y,u}{}&\quad
        \frac{1}{2}\|y-\yd\|_{L^2(\Omega)}^2 + \costp(u) \label{eq:elliptic_cost}\\
        \stt{}&\quad
        -\Delta y+y+\sigma y^3=u \ \text{in }\Omega,\qquad \partial_{\mathbf n} y|_{\partial\Omega}=0, \label{eq:elliptic_state}\\
        &\quad
        y\le \bar y \ \text{in }\Omega . \label{eq:elliptic_state_bound}
    \end{align}
\end{subequations}
Let $\Omega\subset\R^2$ be a bounded convex polygonal domain, and suppose that $\yd\in L^2(\Omega)$, $\bar y\in C(\overline\Omega)$, and $\alpha,\sigma>0$.
Equation~\eqref{eq:elliptic_state} is a special case of \eqref{eq:state_affine_Bf_Neu}, with $U=L^2(\Omega)$, $c=1$, $B=I$, $f=0$, and $N(x,s)=\sigma s^3$.
Therefore, \cref{prop:unified_compact_stability_affine_Bf_Neumann,rem:cubic_class} apply with $p=3$, $a=\sigma$, $b=g=0$, and $\kappa=0$.
In particular, the control-to-state map $\Sctl:L^2(\Omega)\to H^1(\Omega)\cap C(\overline\Omega)$ is well-defined, Fr\'echet differentiable with a locally Lipschitz continuous derivative, and sequentially weak-to-strong continuous.
Moreover, all the remaining conclusions of \cref{prop:unified_compact_stability_affine_Bf_Neumann} hold.

The functional $\costp$ in \eqref{eq:elliptic_cost} combines the quadratic control regularization with the indicator imposing pointwise control constraints.
For $u_a,u_b\in L^\infty(\Omega)$ satisfying $u_a\leq u_b$ a.e., we write
\[
    \Us_{\mathrm{ctl}}
    \coloneqq
    \bigl\{u\in L^2(\Omega):
    u_a(x)\leq u(x)\leq u_b(x)
    \ \text{for a.e. }x\in\Omega\bigr\}
\]
and let $\costp(u)\coloneqq\frac\alpha2\|u\|_{L^2(\Omega)}^2+\indicator_{\Us_{\mathrm{ctl}}}(u)$.
The set $\Us_{\mathrm{ctl}}$ is nonempty, closed, convex, and bounded in $L^2(\Omega)$.
Hence, $\costp$ is proper, convex, and lower semicontinuous, and $\dom\costp=\Us_{\mathrm{ctl}}$ is weakly sequentially closed.
Consequently, \eqref{eq:opt_problem_pde_e} admits the reduced formulation \eqref{eq:Reduced}.
With $\costr(u)=\frac12\|\Sctl(u)-\yd\|_{L^2(\Omega)}^2$, \cref{lem:Applicability} verifies the regularity and compactness properties.

If the reduced feasible set is nonempty, then the reduced objective is bounded from below and coercive, while the weak-to-strong continuity of $\Sctl$ implies that the feasible set is weakly sequentially closed.
Therefore, \cref{thm:existence} applies, and \eqref{eq:opt_problem_pde_e} admits at least one global minimizer.

\paragraph*{Setup}
We set $\Omega=(-1,1)^2$, a bounded convex polygonal domain, and consider three mesh refinements: \texttt{ref5} has 1089 nodes and 2048 elements, \texttt{ref6} has 4225 nodes and 8192 elements, and \texttt{ref7} has 16641 nodes and 32768 elements.
We choose $\sigma=2$ in \eqref{eq:elliptic_state} and $\alpha=0.1$ in \eqref{eq:elliptic_cost}.

In setting \texttt{id0}, we choose $\bar y=2$ and the pointwise control bounds $(u_a,u_b)=(-1,1)$.
In setting \texttt{id1}, we choose $\bar y=0.17$ and retain the same control bounds.
In setting \texttt{id2}, we choose $\bar y=0.17$ and $(u_a,u_b)=(0,0.6)$.

The target state is $\yd(x)=1$ if $\|x\|\le\nicefrac{1}{2}$ and $x_1\ge-\nicefrac{1}{4}$, and $\yd(x)=0$ otherwise.
We run \cref{alg:IP} with tolerance $\varepsilon=10^{-5}$.
The initial barrier parameter is $\nu_1=1$, the decrease factor is $\theta_\nu=\nicefrac{1}{2}$, and the inner tolerance is $\varepsilon_k=\max\{\varepsilon,\nu_k\}$ for each $k\in\N$.

The nonlinear state equation \eqref{eq:elliptic_state} is solved by Newton's method until the residual norm falls below \(10^{-13}\).
The effect of this inexact state solve is not included in the theoretical proximal-gradient residual analysis; the tolerance is chosen sufficiently small for the reported experiments.
Moreover, the experiments below solve finite-dimensional discretizations of this problem. We do not claim a discretization-convergence theorem here; the mesh-refinement study is included as numerical evidence of stability with respect to the discretization.

\paragraph*{Results}
In setting \texttt{id0}, both the state bound $y\le2$ and the control bounds $-1\le u\le1$ are inactive at the computed solution.
\Cref{fig:elliptic_id0_ref6} displays the computed solution and compares the three barrier functions.
As in the preceding dictionary-learning example in \cref{ex:dict-learning}, the logarithmic and log-like barriers require fewer inner iterations than the inverse barrier.
The mesh-refinement results in \cref{fig:elliptic_id0_refining} show the same behavior for all three metrics.

In setting \texttt{id1}, the tighter state bound $y\le0.17$ is active at the computed solution, whereas the control bounds $-1\le u\le1$ remain inactive.
\Cref{fig:elliptic_id1_ref6} reports the results obtained with \cref{alg:IP}.
All three barrier functions require more inner iterations than in setting \texttt{id0}, but their relative behavior remains similar.

In setting \texttt{id2}, the state bound $y\le0.17$ is combined with the tighter control bounds $0\le u\le0.6$; both are active at the computed solution.
\Cref{fig:elliptic_id2_ref6} shows that the three barrier functions require fewer inner iterations than in setting \texttt{id1}, with iteration counts comparable to those in setting \texttt{id0}.

\begin{figure}[!htbp]
    \centering%
    \includegraphics[scale=0.75]{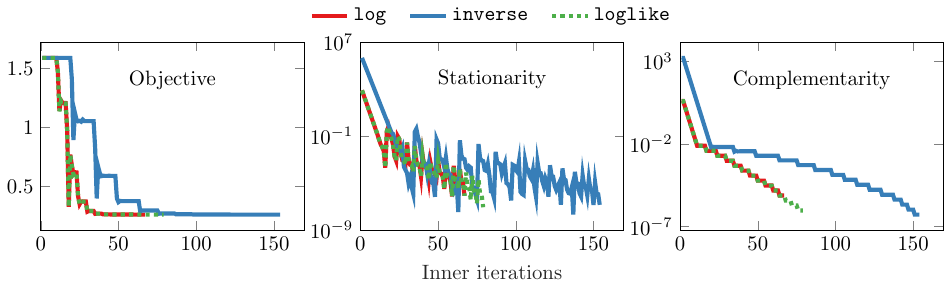}\\
    \includegraphics[scale=0.17]{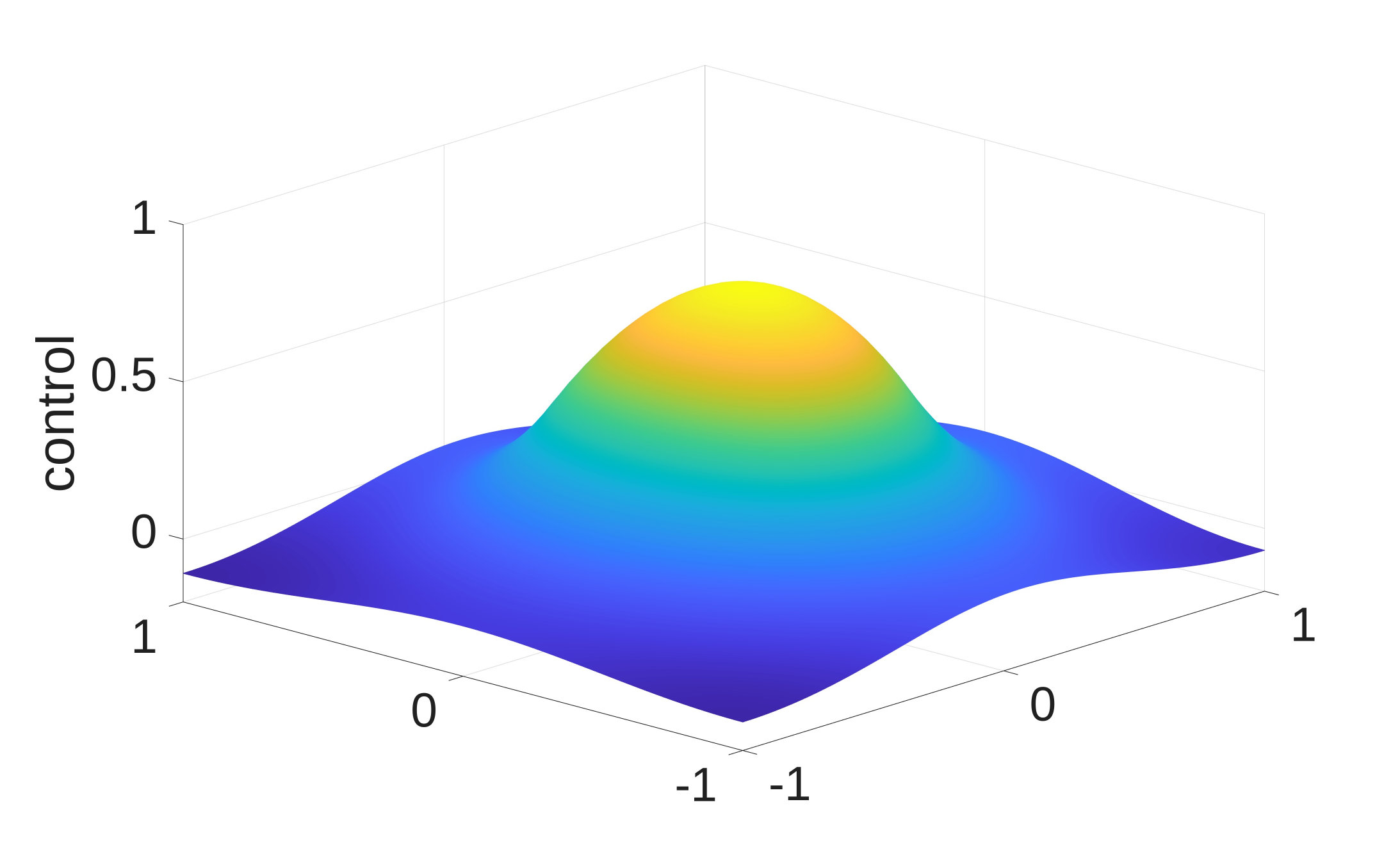}%
    \includegraphics[scale=0.17]{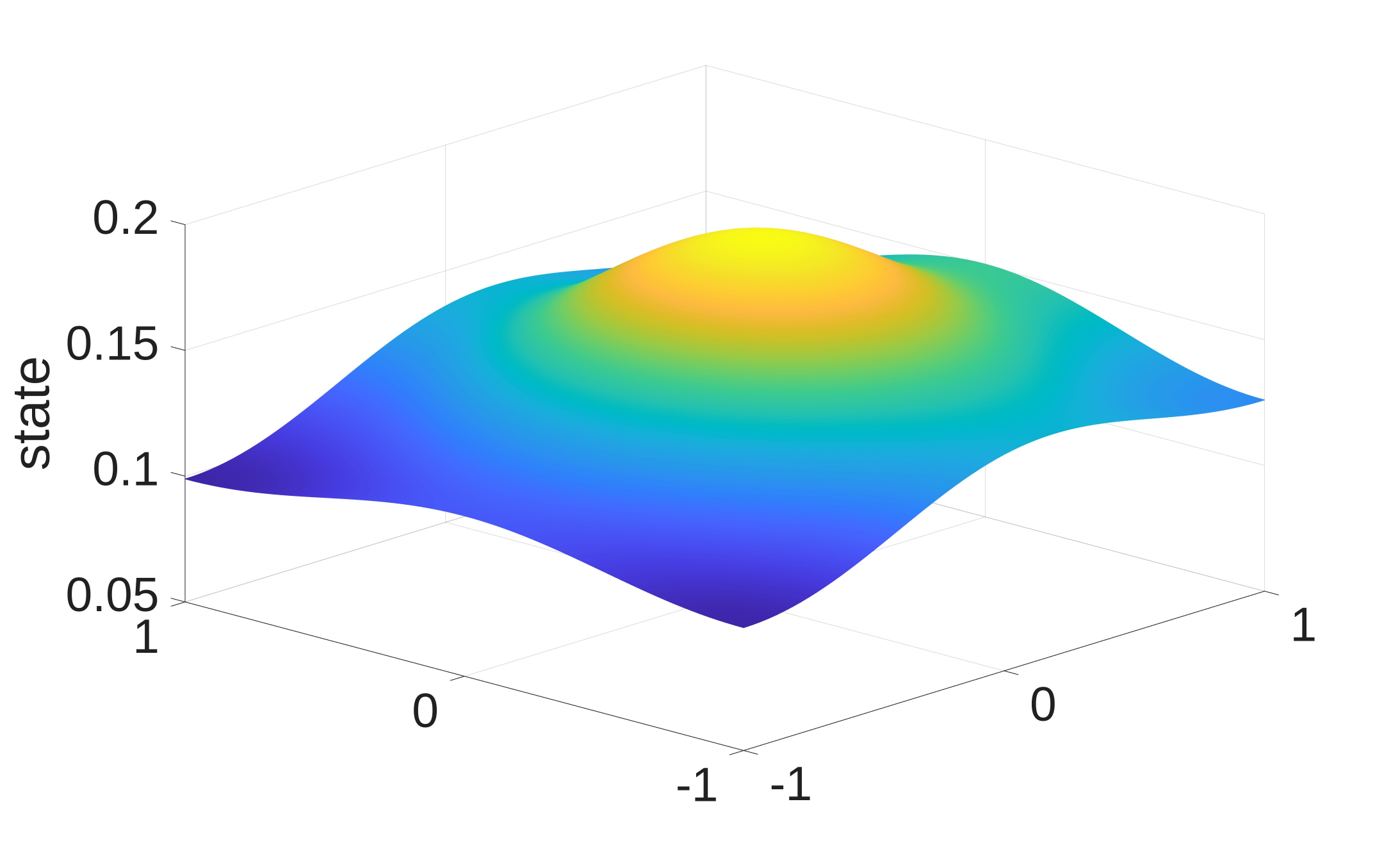}%
    \caption{Elliptic optimal control in setting \texttt{id0} on mesh \texttt{ref6}: objective, stationarity, and complementarity for the three barrier functions (top), and the computed control and state obtained with the logarithmic barrier (bottom).}%
    \label{fig:elliptic_id0_ref6}%
\end{figure}

\begin{figure}[!htbp]
    \centering%
    \includegraphics[scale=0.65]{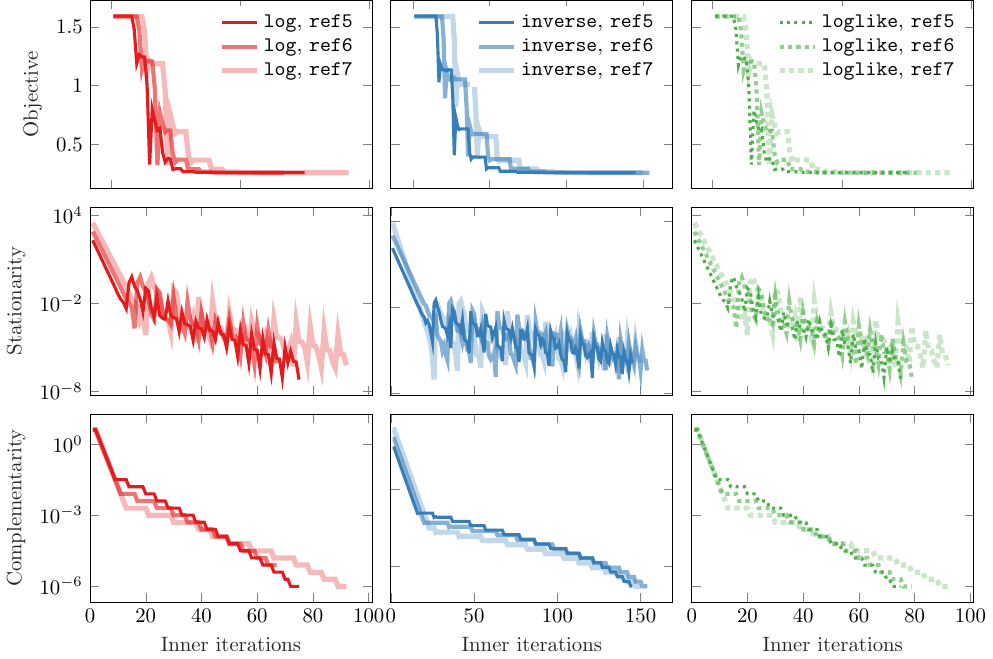}%
    \caption{Elliptic optimal control in setting \texttt{id0}: metrics along the iterations for the three barrier functions and mesh refinements.}%
    \label{fig:elliptic_id0_refining}%
\end{figure}

\begin{figure}[!htbp]
    \centering%
    \includegraphics[scale=0.75]{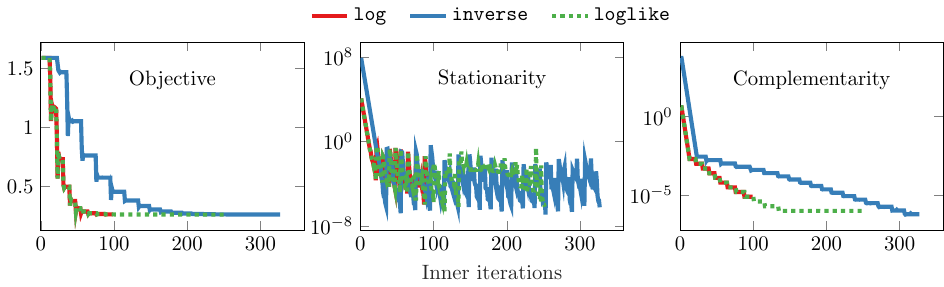}\\
    \includegraphics[scale=0.17]{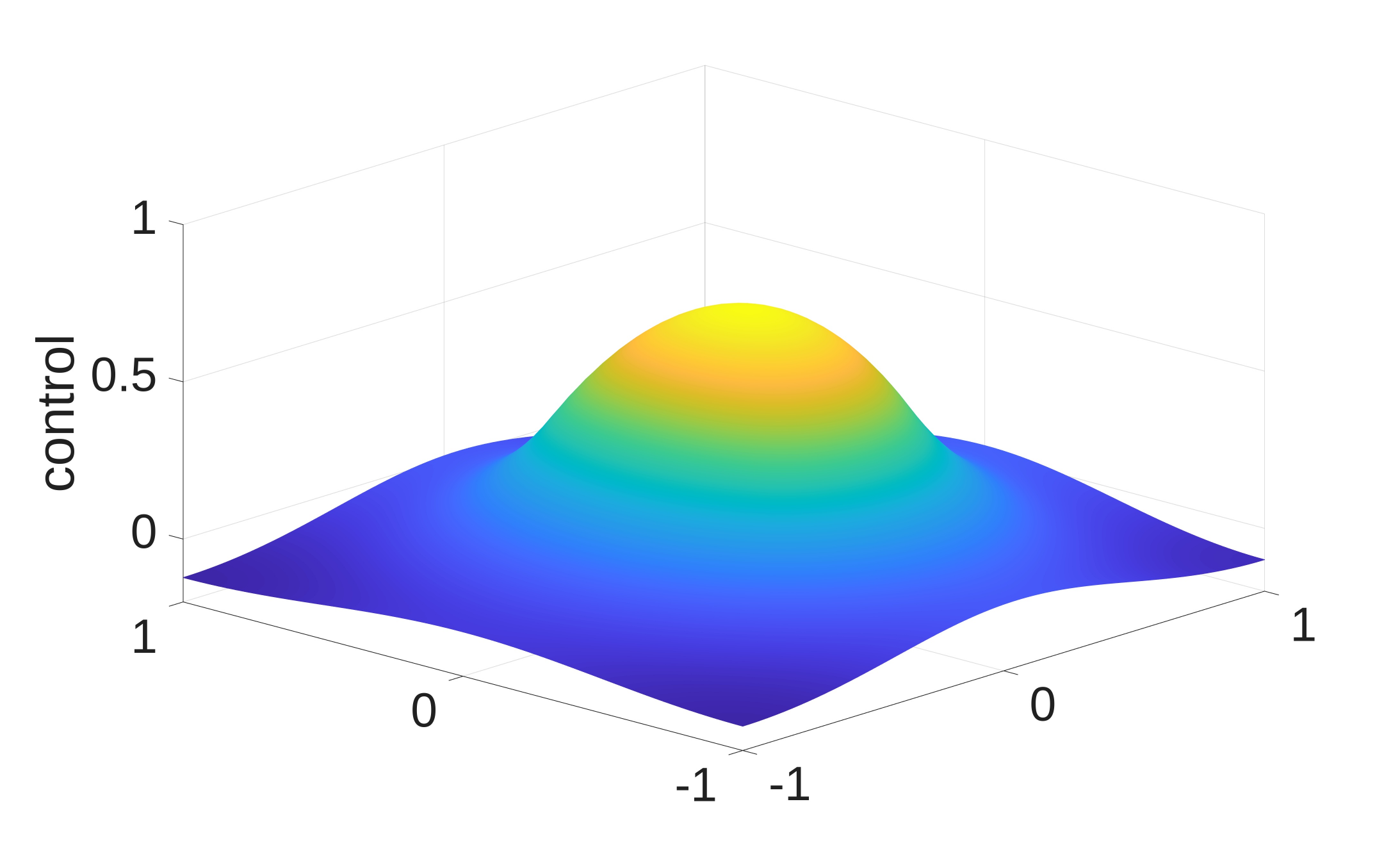}%
    \includegraphics[scale=0.17]{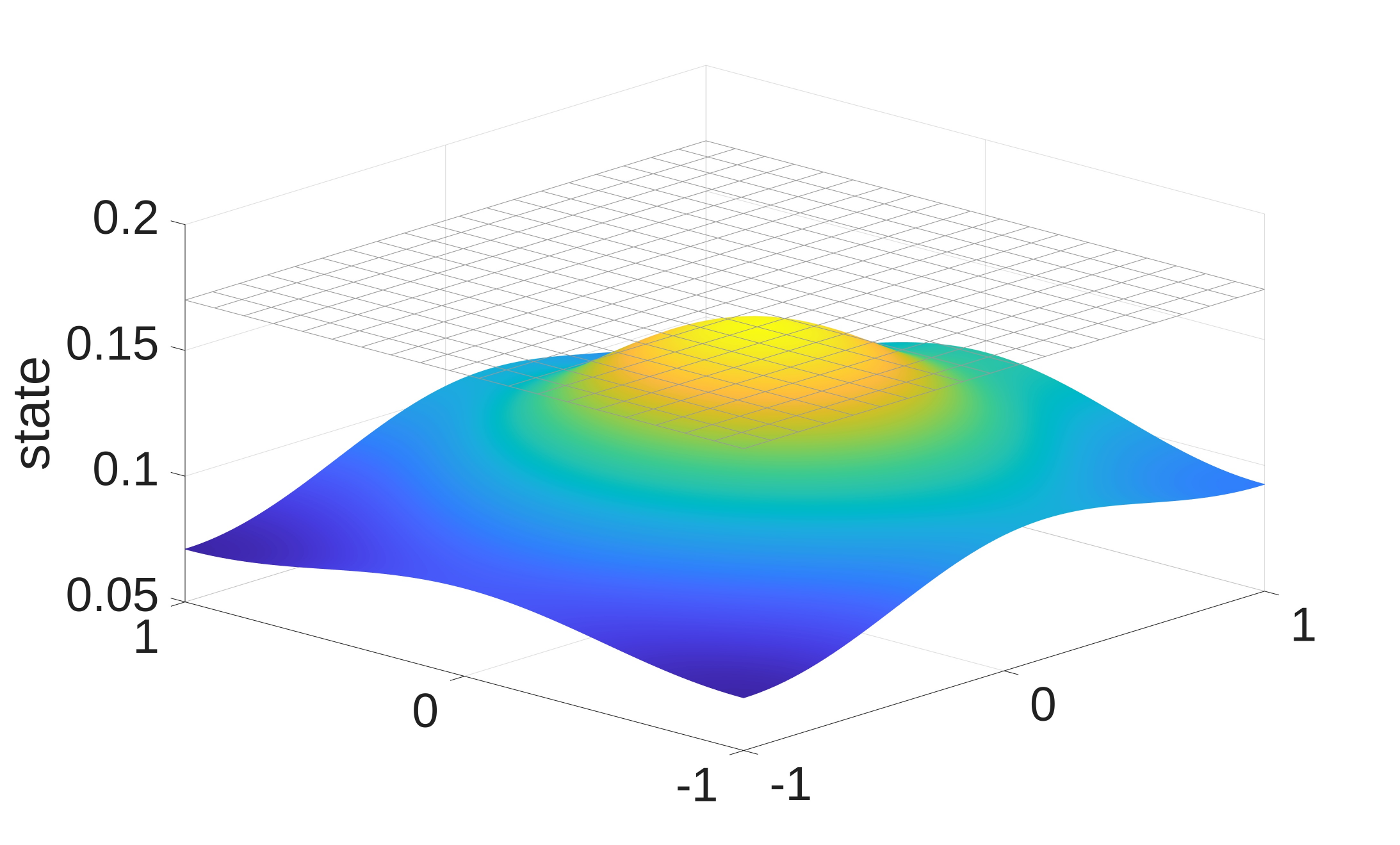}%
    \caption{Elliptic optimal control in setting \texttt{id1} on mesh \texttt{ref6}: objective, stationarity, and complementarity for the three barrier functions (top), and the computed control and state obtained with the logarithmic barrier (bottom). The gray grid in the state plot depicts the upper bound $y=0.17$.}%
    \label{fig:elliptic_id1_ref6}%
\end{figure}

\begin{figure}[!htbp]
    \centering%
    \includegraphics[scale=0.75]{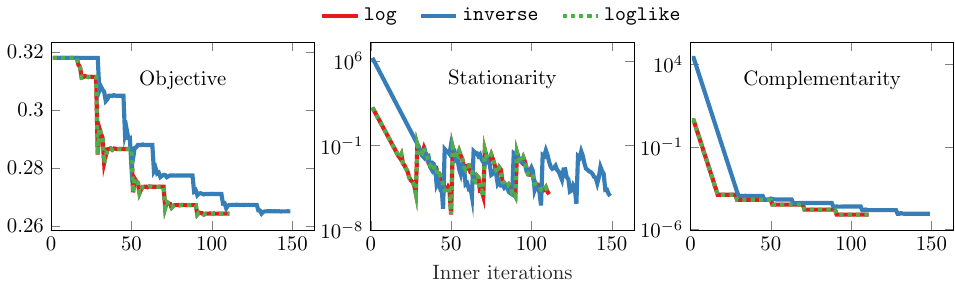}\\
    \includegraphics[scale=0.17]{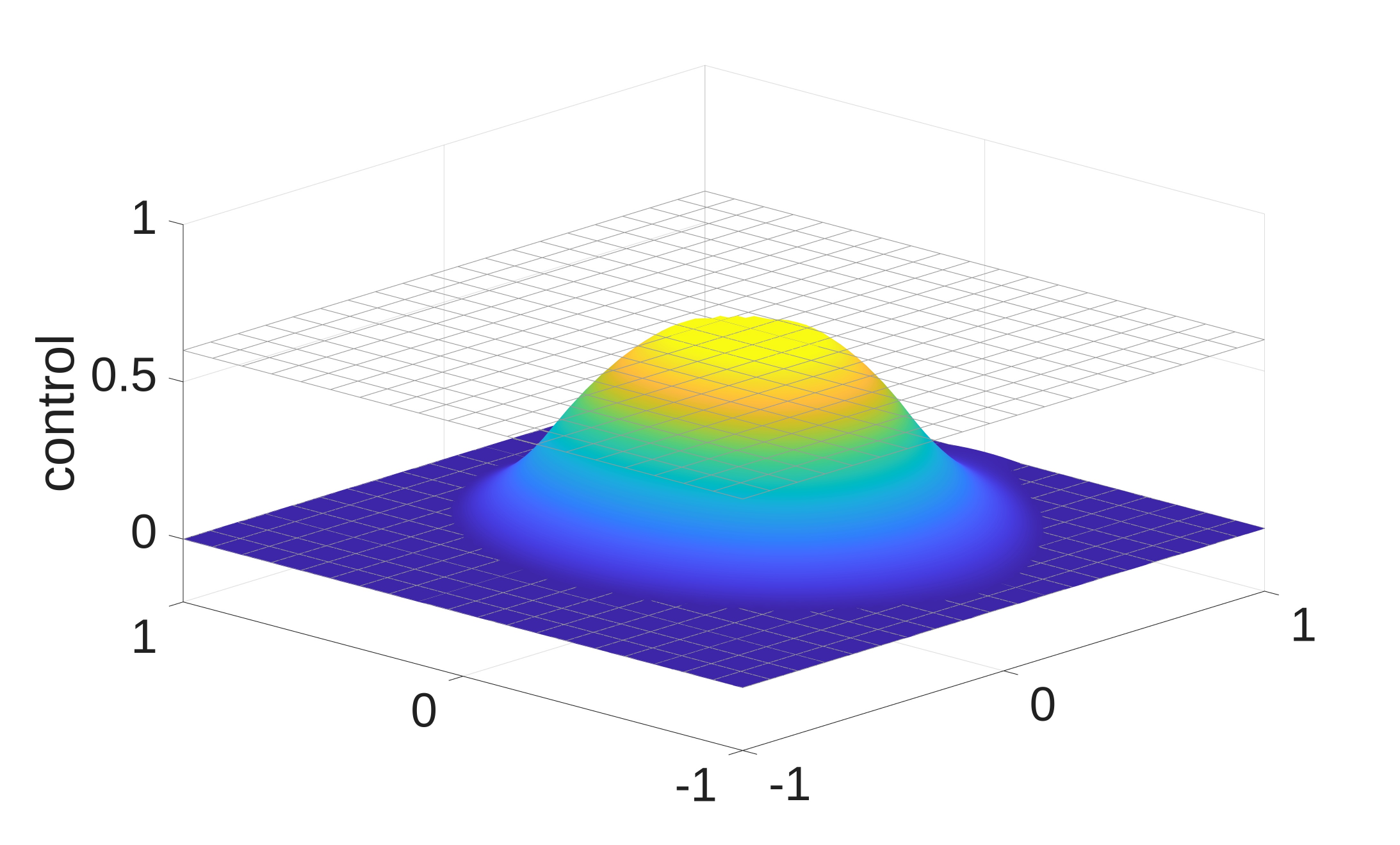}%
    \includegraphics[scale=0.17]{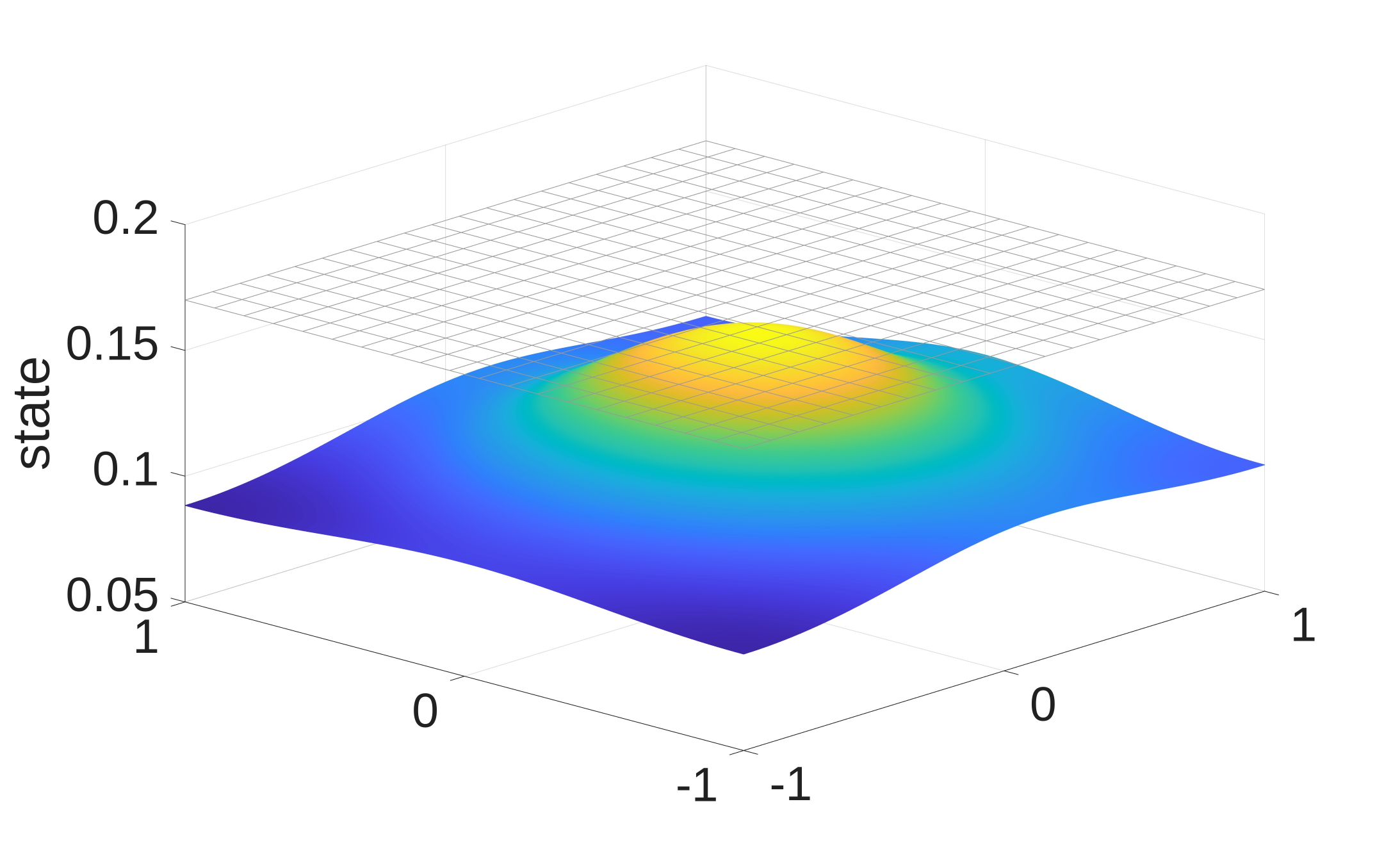}%
    \caption{Elliptic optimal control in setting \texttt{id2} on mesh \texttt{ref6}: objective, stationarity, and complementarity for the three barrier functions (top), and the computed control and state obtained with the logarithmic barrier (bottom). The gray grids depict the control bounds $u=0$ and $u=0.6$ and the state bound $y=0.17$.}%
    \label{fig:elliptic_id2_ref6}%
\end{figure}

\section{Conclusions}\label{sec:conclusion}

We developed an inexact interior-point proximal-gradient framework for nonsmooth, possibly nonconvex optimization with cone-ordered constraints in Hilbert and Banach spaces.
The analysis covers logarithmic and power-type barriers, approximate KKT conditions, weak/weak-$\ast$ multiplier limits, and barrier-dependent inner–outer complexity estimates under explicit regularity assumptions.
The main technical challenge is that the barrier can cause the Lipschitz bound for the smooth part of the barrier subproblem to deteriorate as $\nu\to0$ and as the iterates approach the cone boundary.
Consequently, for both barrier functions, the total complexity is dominated by the final outer iterations. This curvature--complexity tradeoff is a key structural feature of the analysis, and the slower curvature growth of the power barrier translates directly into a milder complexity exponent.
The PDE verification shows that the framework applies to semilinear elliptic state-constrained optimal control with nonsmooth control regularization.
Numerical experiments illustrate the behavior of the proposed method on finite-dimensional and PDE-constrained problems.
For the PDE-constrained problem, the reported relative behavior of the three barrier variants is consistent across the tested mesh-refinement levels.

An important trade-off arises in the function-space setting \(Z=C(\overline K)\). 
For the logarithmic barrier, the required blow-up property at the boundary is not guaranteed in general, so strict feasibility of the inner iterates cannot be inferred from bounded barrier values alone.
For power barriers, this property can be recovered by choosing \(p\) sufficiently large relative to the spatial dimension \(d\) and the H\"older exponent \(\alpha\) of the constraint function, namely such that \(\alpha p>d\).
The outer convergence analysis, however, requires additional control of the barrier subdifferentials, either through a uniform boundedness assumption or further regularity.
In the PDE-constrained examples considered here, the required H\"older regularity follows from the available \(H^2\)-regularity and the Sobolev--Morrey embedding.

\medskip

Several directions are open for future work.
Extending the framework to evolution equations with state constraints requires choosing a state space in which the state constraint is meaningful as an order-cone constraint and verifying the corresponding compactness and differentiability properties of the control-to-state map.
A second direction is the treatment of nonsmooth regularization terms composed with linear operators (e.g., a spatial gradient or finite-difference matrix); this requires extending the proximal-gradient inner loop to handle the composition.
Finally, stochastic and finite-sum variants relevant for large-scale machine learning problems would require stochastic inner-complexity bounds and a corresponding analysis of how variance in the gradient estimates propagates through the outer barrier loop.

\bigskip

\small
\noindent\textbf{Data and code availability.}
Code and experimental results associated with this work are archived on Zenodo and publicly available at \textsc{doi} \href{https://doi.org/10.5281/zenodo.21641166}{10.5281/zenodo.21641166}.
The \emph{Database of Faces} is available online at \url{https://cam-orl.co.uk/facedatabase.html}.

\phantomsection
\addcontentsline{toc}{section}{References}%
\bibliographystyle{habbrv}
\bibliography{biblio}
\normalsize

\appendix

\section{Omitted proofs}\label{sec:omitted_proofs}

\subsection*{Proof of \cref{prop:unified_compact_stability_affine_Bf_Neumann}}\label{proof:verification}

\begin{proof}
We write \(V\coloneqq H^1(\Omega)\), \(\alpha\coloneqq c_0-\kappa>0\), \(L^r\coloneqq L^r(\Omega)\), and \(\|\cdot\|_r\coloneqq\|\cdot\|_{L^r(\Omega)}\).
The notation \(X\hookrightarrow Y\) and \(X\Subset Y\) indicates, respectively, a continuous and a compact embedding.
Throughout the proof, \(C>0\) denotes a generic constant that may change from line to line and depends only on the fixed data and the constants in \ref{ass:E}--\ref{ass:H4}.
On a bounded set \(U_b\subset U\), it may additionally depend on \(\sup_{u\in U_b}\|u\|_U\), but not on the particular \(u\in U_b\).
Moreover, \ref{ass:H1} and \ref{ass:H3} imply
\begin{equation}\label{eq:Ny_lower_bound}
    N_y(x,s)\ge-\kappa
    \qquad
    \text{for a.e. }x\in\Omega\text{ and every }s\in\R.
\end{equation}

\paragraph*{Proof of \ref{stAffE:I}}
We consider the operator $T:V\to V^*$ given by
\[
 \langle T(y),v\rangle
 =
 \int_\Omega \nabla y\cdot\nabla v\,dx
 +\int_\Omega cyv\,dx
 +\int_\Omega N(x,y)v\,dx.
\]
By \ref{ass:H4} and the Sobolev embedding $V\hookrightarrow L^{2p}$ \cite[Cor~9.14]{Brez2011functional} in the stated range of $p$,  we have $N(\cdot,y)\in L^2$.
The Carath\'eodory property in \ref{ass:H1}, the growth condition \ref{ass:H4}, and the standard continuity theorem for Nemytskii operators show that $T$ is continuous. 

For $y_1,y_2\in V$, with $\delta y=y_1-y_2$, \ref{ass:E} and \ref{ass:H3} give
\begin{equation}\label{eq:strong_mono_neu}
    \langle T(y_1)-T(y_2),\delta y\rangle
    \ge
    \|\nabla\delta y\|_2^2
    +\alpha\|\delta y\|_2^2.
\end{equation}
Thus $T$ is strongly monotone and coercive.
The Minty--Browder theorem \cite[Thm~5.16]{Brez2011functional} gives a unique weak solution of $T(y)=Bu+f$.
Taking $t=0$ in \ref{ass:H3} yields $N(x,s)s\ge N(x,0)s-\kappa s^2$.
Testing the state equation with $y$, using $|N(\cdot,0)|\le\eta$, and applying Young's inequality therefore gives
\begin{equation}\label{eq:H1_est1}
   \|y\|_V
   \le C\bigl(1+\|Bu+f\|_2\bigr).
\end{equation}
Furthermore, we have 
\[
    \|N(\cdot,y)\|_2
    \le
    \|\eta\|_2+b_1\|y\|_2+b_2\|y\|_{2p}^p
    \le C\bigl(1+\|Bu+f\|_2\bigr)^q,
\]
where $q = \max\{1,p\}$.
Together with the fact that $Ay=Bu+f-N(\cdot,y)\in L^2$, and using global Neumann regularity \cite[Thms~2.4.2.7 and~3.2.1.3]{Grisvard2011elliptic}, we can write
\begin{equation}\label{eq:H2_reg_neu}
   \|y\|_{H^2(\Omega)}
   \le C\bigl(\|Bu+f-N(\cdot,y)\|_2+\|y\|_2\bigr)
   \le C\bigl(1+\|Bu+f\|_2\bigr)^q.
\end{equation}
Furthermore, since $d\le3$, the Sobolev embedding $H^2(\Omega)\hookrightarrow C(\overline\Omega)$ \cite[Cor~9.15]{Brez2011functional} holds.
Thus using \eqref{eq:H2_reg_neu} and the fact $\|Bu+f\|_2\le\|B\|\|u\|_U+\|f\|_2$, we obtain \eqref{eq:esimate_soltution_semilinear}.

\paragraph*{Proof of \ref{stAffE:II}}
Let $u_1,u_2\in U_b$ be given.
We write $y_i \coloneqq \Sctl(u_i)$, $\delta u \coloneqq u_1-u_2$, and $\delta y \coloneqq  y_1-y_2$.
Subtracting the corresponding state equations, we obtain
\begin{equation}\label{eq:difference_state_neu}
    A\delta y+N(\cdot,y_1)-N(\cdot,y_2)
    =B\delta u
    \quad\text{in }\Omega,
    \qquad
    \left.\partial_{\mathbf n}\delta y\right|_{\partial\Omega}=0.
\end{equation}
Testing \eqref{eq:difference_state_neu} with \(\delta y\) and using \ref{ass:E} and \ref{ass:H3}, we obtain
\[
    \|\nabla\delta y\|_2^2
    +\alpha\|\delta y\|_2^2
    \le
    \|B\delta u\|_2\|\delta y\|_2.
\]
Consequently, we have 
\begin{equation}\label{eq:H1_Lip_neu}
   \|\delta y\|_V
   \le C\|\delta u\|_U,
   \qquad
   \|\delta y\|_2
   \le\alpha^{-1}\|B\|\|\delta u\|_U.
\end{equation}
By \ref{stAffE:I}, the states corresponding to $U_b$ are uniformly bounded in $L^\infty$.
For a corresponding bound $M$, let
\[
   a_{12}(x)
   \coloneqq
   \int_0^1N_y\bigl(x,y_2+t\delta y\bigr)\,dt.
\]
Then using the integral mean-value formula, we obtain
\[
   N(\cdot,y_1)-N(\cdot,y_2)=a_{12}\delta y,
   \qquad \text{ with } \qquad
   \|a_{12}\|_\infty\le L_M.
\]
Thus, we can write  $A\delta y=B\delta u-a_{12}\delta y$.
The global $H^2$-estimate and \eqref{eq:H1_Lip_neu} imply
\begin{equation}\label{eq:H2_Lip_neu}
   \|\delta y\|_{H^2(\Omega)}
   \le C\|\delta u\|_U.
\end{equation}
Combining \eqref{eq:H1_Lip_neu},
\eqref{eq:H2_Lip_neu}, and the embedding
$H^2(\Omega)\hookrightarrow C(\overline\Omega)$ gives
\[
    \|\delta y\|_V+\|\delta y\|_{C(\overline\Omega)}
    \leq L_S(U_b)\|\delta u\|_U,
\]
which proves \ref{stAffE:II}.

\paragraph*{Proof of \ref{stAffE:III}}
First, we fix $u\in U$ and write $y \coloneqq \Sctl(u)$.
For $h\in U$, let $z_h$ solve \eqref{eq:lin_neu}.
By \eqref{eq:Ny_lower_bound}, the bilinear form of the linearized equation satisfies
\[
 \int_\Omega |\nabla v|^2\,dx
 +\int_\Omega\bigl(c+N_y(x,y)\bigr)v^2\,dx
 \ge
 \|\nabla v\|_2^2+\alpha\|v\|_2^2.
\]
Lax--Milgram and the global $H^2$-estimate therefore give
\begin{equation}\label{eq:linearized_H2_bound}
   \|z_h\|_{H^2(\Omega)}
   \le C\|h\|_U.
\end{equation}
Hence $h\mapsto z_h$ is a bounded linear map from $U$ to $V\cap C(\overline\Omega)$.
For $v\in U$ tending to zero, we write
\[
   \delta y \coloneqq \Sctl(u+v)-\Sctl(u),
   \qquad
   r_v \coloneqq \delta y-z_v.
\]
Taylor's formula then gives
\[
 \rho_v
 \coloneqq
 N(\cdot,y+\delta y)-N(\cdot,y)-N_y(\cdot,y)\delta y
 =
 \int_0^1
 \bigl[N_y(\cdot,y+t\delta y)-N_y(\cdot,y)\bigr]\delta y\,dt.
\]
For $v$ sufficiently small, the states remain in a fixed bounded interval.
Therefore \ref{ass:H2} and \eqref{eq:H1_Lip_neu}--\eqref{eq:H2_Lip_neu} yield
\[
 \|\rho_v\|_2
 \le
 \frac{\ell_M}{2}\|\delta y\|_\infty\|\delta y\|_2
 \le C\|v\|_U^2.
\]
The remainder solves
\[
   Ar_v+N_y(\cdot,y)r_v=-\rho_v \quad\text{in }\Omega,
   \qquad
  \left.\partial_{\mathbf n} r_v\right|_{\partial\Omega}=0.
\]
Thus, applying the coercivity estimate and the global \(H^2\)-regularity argument used to obtain \eqref{eq:linearized_H2_bound}, now with \(-\rho_v\) in place of \(Bh\), and using \(H^2(\Omega)\hookrightarrow C(\overline\Omega)\), we obtain
\[
    \|r_v\|_V+\|r_v\|_{C(\overline\Omega)}
    \le C\|\rho_v\|_2
    =o(\|v\|_U),
\]
which proves Fr\'echet differentiability and the formula in \eqref{eq:lin_neu}.

\paragraph*{Proof of \ref{stAffE:IV}}
For $u\in U_b$, we write $y\coloneqq\Sctl(u)$.
By \ref{stAffE:I}, there exists $M=M(U_b)>0$ such that
\[
    \|y\|_\infty\le M
    \qquad\forall u\in U_b.
\]
The coercivity estimate and \eqref{eq:linearized_H2_bound}, with constants uniform on $U_b$, give
\[
   \|\Sctl'(u)h\|_V
   +\|\Sctl'(u)h\|_{C(\overline\Omega)}
   \le C\|h\|_U.
\]
Now, let $u_1,u_2\in U_b$ be given.
We write $y_i\coloneqq\Sctl(u_i)$, $z_i\coloneqq \Sctl'(u_i)h$, and $w \coloneqq z_1-z_2$.
Then
\begin{equation}\label{eq:Prop3_w_equation}
    Aw+N_y(\cdot,y_2)w
    =
    \bigl[N_y(\cdot,y_2)-N_y(\cdot,y_1)\bigr]z_1  \quad \text{ in } \Omega,   \qquad
    \left.\partial_{\mathbf n} w\right|_{\partial\Omega}=0.
\end{equation}
Using \ref{ass:H2}, \ref{stAffE:II}, and the uniform bound for $z_1$, we obtain
\[
    \begin{aligned}
    \bigl\|
    \bigl[N_y(\cdot,y_2)-N_y(\cdot,y_1)\bigr]z_1
    \bigr\|_2
    &\le
    \ell_M\|y_2-y_1\|_\infty\|z_1\|_2\le
    C\|u_2-u_1\|_U\|h\|_U.
    \end{aligned}
\]
Coercivity and elliptic regularity applied to \eqref{eq:Prop3_w_equation} give
\[
 \|w\|_V+\|w\|_{C(\overline\Omega)}
 \le C\|u_2-u_1\|_U\|h\|_U.
\]
Taking the supremum over $\|h\|_U=1$ proves \ref{stAffE:IV}.

\paragraph*{Proof of \ref{stAffE:V}}
Let $u_k\rightharpoonup u$ in $U$.
We write $y_k \coloneqq \Sctl(u_k)$ and $y \coloneqq \Sctl(u)$.
Then, the sequence $(y_k)$ is bounded in $H^2(\Omega)$ by \ref{stAffE:I}.
Hence, after passing to a subsequence, there exists \(\bar y\in H^2(\Omega)\) such that $y_k\rightharpoonup\bar y$ weakly in $H^2(\Omega)$.
Since $H^2(\Omega)\Subset V\cap C(\overline\Omega)$ for $d\le 3$ due to \cite[Cor~9.15 and Thm~9.16]{Brez2011functional}, the same subsequence converges strongly in $V\cap C(\overline\Omega)$ to $\bar y$.
On such a subsequence, \ref{ass:H2} gives
\[
   \|N(\cdot,y_k)-N(\cdot,\bar y)\|_2
   \le L_M|\Omega|^{1/2}\|y_k-\bar y\|_\infty
   \to0.
\]
Passing to the limit in the weak state equation shows that $\bar y$ solves the equation corresponding to $u$.
Uniqueness gives $\bar y=y$. Since the same argument applies to
every subsequence of \((y_k)\), the subsequence principle yields $y_k\to y$ strongly in $V\cap C(\overline\Omega)$ for the full sequence.

Next, we fix $h\in U$ and write $z_k \coloneqq \Sctl'(u_k)h$, $z \coloneqq \Sctl'(u)h$, and $w_k \coloneqq z_k-z$.
The uniform state bound and \ref{ass:H2} imply that
\begin{equation}\label{eq:Ny_uniform_convergence}
   \|N_y(\cdot,y_k)-N_y(\cdot,y)\|_\infty
   \le\ell_M\|y_k-y\|_\infty
   \to0.
\end{equation}
Moreover, for every $k\in \N_0$, $w_k$ solves
\[
    Aw_k+N_y(\cdot,y)w_k
    =
    \bigl[N_y(\cdot,y)-N_y(\cdot,y_k)\bigr]z_k\quad\text{in }\Omega,
    \qquad
    \left.\partial_{\mathbf n} w_k\right|_{\partial\Omega}=0.
\]
Since $(z_k)$ is uniformly bounded in $H^2(\Omega)$, we infer that 
\[
 \bigl\|
   \bigl[N_y(\cdot,y)-N_y(\cdot,y_k)\bigr]z_k
 \bigr\|_2
 \le
 \|N_y(\cdot,y)-N_y(\cdot,y_k)\|_\infty\|z_k\|_2
 \to0.
\]
Coercivity and elliptic regularity yield $w_k\to0$ in $H^2(\Omega)$, and hence in $V\cap C(\overline\Omega)$.

\paragraph*{Proof of \ref{stAffE:VI}}
Let $u_k\rightharpoonup u$ in $U$.
We write $y_k\coloneqq\Sctl(u_k)$, $y\coloneqq\Sctl(u)$, $q_k\coloneqq c+N_y(\cdot,y_k)$, and $q\coloneqq c+N_y(\cdot,y)$.
Since $(u_k)$ is bounded in $U$, \ref{stAffE:I} and \ref{ass:H2} imply that $(q_k)$ and $q$ are uniformly bounded in $L^\infty$.
By \eqref{eq:Ny_lower_bound}, we have also that \(q_k,q\geq\alpha>0\).

For \(d\geq2\), we choose \(r\) such that \(2d/(d+2)<r<d/(d-1)\); for \(d=1\), any suitable \(r>1\) may be chosen.
By the classical Stampacchia theory for elliptic equations with measure data \cite[Section~3.1, in particular Remark~3.1 and Thm~3.1]{Droniou2000}, we denote by
\(p_k,p\in W^{1,r}(\Omega)\) the corresponding Stampacchia solutions, characterized by
\begin{equation}\label{eq:adjoint_weak_text}
    \int_\Omega \nabla p_k\cdot\nabla v\,dx
    +\int_\Omega q_kp_kv\,dx
    =
    \langle\mu,v\rangle_
    {\mathcal M(\overline\Omega),C(\overline\Omega)}
    \qquad
    \forall v\in C^\infty(\overline\Omega),
\end{equation}
and by the analogous identity with \(q_k,p_k\) replaced by \(q,p\).
The corresponding measure-data estimate gives
\begin{equation}\label{eq:W1r_bound_pk}
    \|p_k\|_{W^{1,r}(\Omega)}
    +\|p\|_{W^{1,r}(\Omega)}
    \leq
    C\|\mu\|_{\mathcal M(\overline\Omega)},
\end{equation}
where \(C\) is independent of \(k\).      

Next, for \(h\in U\), we write \(z_k \coloneqq \Sctl'(u_k)h\).
The duality relation characterizing \(p_k\), together with the linearized state equation, gives
\[
    \langle\mu,z_k\rangle_
    {\mathcal M(\overline\Omega),C(\overline\Omega)}
    =
    \int_\Omega p_kBh\,dx.
\]
Consequently, we can write that 
\begin{equation}\label{eq:adjoint_representation}
    \Sctl'(u_k)^*\mu=B^*p_k,
    \qquad
    \Sctl'(u)^*\mu=B^*p.
\end{equation}
By \ref{stAffE:V}, \(y_k\to y\) in \(C(\overline\Omega)\), and hence, we can conclude that $\|q_k-q\|_\infty\to 0$.
Estimate \eqref{eq:W1r_bound_pk} and the compact embedding $W^{1,r}(\Omega)\Subset L^2$ imply that every subsequence of \((p_k)\) has a further subsequence
converging strongly in \(L^2\).
Passing to the limit in \eqref{eq:adjoint_weak_text} identifies every such limit with \(p\).
By uniqueness and the subsequence principle, we conclude that $ p_k\to p$ strongly in $L^2$.
Together with \eqref{eq:adjoint_representation}, this proves \eqref{eq:adjoint_pointwise_conv}.

Finally, suppose that \(\{u_k\}\) is bounded in \(U\) and that \(\barrierfun\subset\mathcal M(\overline\Omega)\) is bounded in total variation.
By \eqref{eq:W1r_bound_pk}, the family  $\{\,p_{k,\mu}:k\in\N,\ \mu\in\barrierfun\,\}$ is bounded in \(W^{1,r}(\Omega)\) and therefore relatively compact in
\(L^2\).
Since \(\Sctl'(u_k)^*\mu=B^*p_{k,\mu}\) and \(B^*:L^2\to U\) is bounded, \(\mathcal K(\barrierfun)\) is relatively compact in \(U\), which proves the collective compactness.
\end{proof}

\end{document}